\documentclass[12pt]{amsart}


\usepackage{amscd}
\usepackage{amsmath}
\usepackage{amssymb}
\usepackage{amsthm}
\usepackage{epsf}
\usepackage{latexsym}
\usepackage{verbatim}
\usepackage[margin=1.0in]{geometry}
\usepackage[all, cmtip]{xy}
\usepackage{tikz}

\setlength{\textheight}{8.5in} \setlength{\topmargin}{0.0in}
\setlength{\headheight}{0.5in} \setlength{\headsep}{0.3in}
\setlength{\leftmargin}{1.5in}

\newtheorem{theorem}{Theorem}[section]

\newtheorem{definition}[theorem]{Definition}
\newtheorem{lemma}[theorem]{Lemma}

\newtheorem{corollary}[theorem]{Corollary}
\newtheorem{proposition}[theorem]{Proposition}

\newtheorem{varexample}[theorem]{Example}
\theoremstyle{definition}
\newtheorem{remark}[theorem]{Remark}

\newcommand{\Spec}{\mathrm{Spec}\,}
\newcommand{\PP}{\mathbb{P} }

\newcommand{\OO}{\mathcal{O} }
\newcommand{\QQ}{\mathbb{Q} }
\newcommand{\CC}{\mathbb{C} }

\newcommand{\gquot}{/\!\!/}
\newcommand{\SL}{\operatorname{SL}}

\newcommand{\Chow}{\operatorname{Chow}}

\begin{document}
\title{GIT compactifications of $M_{0,n}$ and flips}
\author{Noah Giansiracusa}
\author{David Jensen}
\author{Han-Bom Moon}
\date{}
\bibliographystyle{alpha}

\maketitle

\begin{abstract}
We use geometric invariant theory (GIT) to construct a large class of compactifications of the moduli space $M_{0,n}$.  These compactifications include many previously known examples, as well as many new ones.  As a consequence of our GIT approach, we exhibit explicit flips and divisorial contractions between these spaces.
\end{abstract}

\section{Introduction}

The moduli spaces of curves ${M}_{g,n}$ and their Deligne-Mumford compactifications $\overline{{M}}_{g,n}$ are among the most ubiquitous and important objects in algebraic geometry.  However, many questions about them remain wide open, including ones that Mumford asked several decades ago concerning various cones of divisors \cite{Mum76,Har87}.  While exploring this topic for $\overline{M}_{0,n}$, Hu and Keel showed that for a sufficiently nice space---a so-called Mori dream space---understanding these cones and their role in birational geometry is intimately related to variations of geometric invariant theory (GIT) quotients \cite{Thaddeus,DH,HK}.  Although it remains unsettled whether $\overline{M}_{0,n}$ is a Mori dream space for $n \ge 7$, the underlying philosophy is applicable nonetheless.

In this paper we explore the birational geometry of $\overline{M}_{0,n}$ and illustrate that VGIT plays a significant role.  We construct a family of modular compactifications of $M_{0,n}$ obtained as  GIT quotients parameterizing $n$-pointed rational normal curves and their degenerations in a projective space.  These compactifications include $\overline{M}_{0,n}$, all the Hassett spaces $\overline{M}_{0,\vec{c}}$, all the previously constructed GIT models, and many new compactifications.

\subsection{The setup}

The Chow variety of degree $d$ curves in $\PP^d$ has an irreducible component parameterizing rational normal curves and their limit cycles.  Denote this by $\Chow(1,d, \PP^d )$ and consider the locus
\[U_{d,n} : = \{ (X, p_1 , \ldots , p_n ) \in \Chow(1,d, \PP^d ) \times ( \PP^d )^n~\vert~p_i \in X \;\forall i \}.\]   There is a natural action of $\SL(d+1)$ on $U_{d,n}$, and the main objects of study in this paper are the GIT quotients $U_{d,n}\gquot\SL(d+1)$ for $n \ge 3$.  These depend on a linearization $L\in\mathbb{Q}_{>0}^{n+1}$ which can be thought of as assigning a rational weight to the curve and each of its marked points.

A preliminary stability analysis reveals that every singular semistable curve is a union of rational normal curves of smaller degree meeting at singularities that are locally a union of coordinate axes (Corollary \ref{cor-geomofstablecurves}).  By considering a certain class of one-parameter subgroups, we derive bounds on the weight of marked points allowed to lie at these singularities and in various linear subspaces (see \S\ref{section:destab}).  Moreover, we show in Proposition \ref{StablePoints} that a rational normal curve with distinct marked points is stable for an appropriate range of linearizations, so there is a convex cone with cross-section $\Delta^\circ \subset \mathbb{Q}^{n+1}$ parameterizing GIT quotients that are compactifications of $M_{0,n}$ (cf. \S\ref{section:EffLin}).  These are related to the Deligne-Mumford-Knudsen compactification as follows:

\begin{theorem}\label{thm:DM-GIT}
Let $d \ge 1$ and $L\in \Delta^{\circ}$. Then:
\begin{enumerate}
	\item The GIT quotient $U_{d,n}\gquot_{L}SL(d+1)$ is
	a compactification of $M_{0,n}$.
	\item There is a regular birational morphism
	\[
		\phi: \overline{M}_{0,n} \rightarrow U_{d,n}\gquot_L\SL(d+1)
	\]
	which preserves $M_{0,n}$.
\end{enumerate}
\end{theorem}

Our technique for proving this is to take an appropriate $\SL(d+1)$-quotient of the Kontsevich space $\overline{M}_{0,n}(\PP^d,d)$ so that every DM-stable curve maps, in a functorial manner, to a GIT-stable curve in $\PP^d$.

\subsection{Chambers, walls, and flips}\label{introsection:maps}

For each fixed $d$, the space of linearizations $\Delta^\circ$ admits a finite wall and chamber decomposition by the general results of VGIT \cite{DH,Thaddeus}.  This endows the birational models we obtain with a rich set of interrelations.  For instance, the quotients corresponding to open chambers map to the quotients corresponding to adjacent walls, and whenever a wall is crossed there is an induced rational map which is frequently a flip.  We undertake a careful analysis of this framework in the context of $U_{d,n}$ and provide a modular description of the maps that arise.

There are two types of walls in the closure of $\Delta^\circ$: interior walls corresponding to changes in stability conditions between open chambers, and exterior walls corresponding to semi-ample linearizations or linearizations with empty stable locus.

Our main results concerning the VGIT of $U_{d,n}$ are the following:
\begin{itemize}
\item \emph{we list all of the GIT walls;}
\item \emph{we classify the strictly semistable curves corresponding to a wall between two chambers and determine the ones with closed orbit;}
\item \emph{we provide necessary and sufficient conditions for the map induced by crossing an interior wall to be i) a divisorial contraction, ii) a flip, or iii) to contract a curve;}
\item \emph{we describe the morphism corresponding to each exterior wall.}
\end{itemize}
Precise statements are provided in \S\ref{section:Smyth} and \S\ref{section:maps}.  The flips we obtain between various models of $\overline{M}_{0,n}$ are quite novel; in fact, it appears that no flips between moduli spaces of pointed genus zero curves have appeared previously in the literature\footnote{That is, a flip in the Mori-theoretic sense of a relatively anti-ample divisor becoming relatively ample; see \cite[Theorem 7.7]{AGS10} for an example of a generalized flip between compactifications.}.  We hope that these can be used to illuminate some previously unexplored Mori-theoretic aspects of the birational geometry of $\overline{M}_{0,n}$.  In particular, we note that the existence of a modular interpretation of these flips, and of the other VGIT maps, is reminiscent of the Hassett-Keel program which aims to construct log canonical models of $\overline{M}_g$ through a sequence of modular flips and contractions.

\subsection{Hassett's weighted spaces}

To illustrate the significance of our unified GIT construction of birational models, consider the Hassett moduli spaces $\overline{M}_{0,\vec{c}}$ of weighted pointed rational curves \cite{Has03}.  For a weight vector $\vec{c}=(c_1,\ldots,c_n)\in\mathbb{Q}_{>0}^n$ with $\sum c_i > 2$, this space parameterizes nodal rational curves with smooth marked points that are allowed to collide if their weights add up to at most 1.  Hassett showed that whenever the weights are decreased, e.g. $\vec{c'}=(c_1',\ldots,c_n')$ with $c'_i \le c_i$, there is a corresponding morphism $\overline{M}_{0,\vec{c}} \rightarrow \overline{M}_{0,\vec{c'}}$.  It has since been discovered that these morphisms are all steps in the log minimal model program for $\overline{M}_{0,n}$. Specifically, the third author shows in \cite{Moo11} that each Hassett space $\overline{M}_{0,\vec{c}}$ is the log canonical model of $\overline{M}_{0,n}$ with respect to the sum of tautological classes $\psi_i$ weighted by $\vec{c}$.

If $\overline{M}_{0,n}$ is indeed a Mori dream space, then by the results of \cite{HK} it would be possible to obtain all log canonical models through VGIT.  Although proving this seems a lofty goal, we are able to deduce the following from our present GIT construction:

\begin{theorem}
For each fixed $n\ge 3$, there exists $d\ge 1$ such that every Hassett space $\overline{M}_{0,\vec{c}}$ arises as a quotient $U_{d,n}\gquot\SL(d+1)$.  Consequently, the log minimal model program for $\overline{M}_{0,n}$ with respect to the $\psi$-classes can be performed entirely through VGIT.
\end{theorem}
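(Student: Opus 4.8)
The plan is to obtain every Hassett space as a GIT quotient $U_{d,n}\gquot_L\SL(d+1)$ in which the linearization $L$ assigns very small weight to the curve, and then to take $d$ large enough that the finitely many GIT chambers for that $d$ refine the finitely many chambers of Hassett's weight domain $\mathcal{D}_n=\{\vec{c}\in(0,1]^n : \sum_i c_i>2\}$. Fixing a weight vector $\vec{c}\in\mathcal{D}_n$, I would consider linearizations of the form $L_\gamma=(\gamma;\,tc_1,\ldots,tc_n)$ with $\gamma$ small and $t>0$ scaled so that $L_\gamma$ lies in the cone $\Delta^\circ$ of Theorem~\ref{thm:DM-GIT}; the first goal is to prove $U_{d,n}\gquot_{L_\gamma}\SL(d+1)\cong\overline{M}_{0,\vec{c}}$ once $d\gg 0$. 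Since there are only finitely many isomorphism classes of Hassett spaces for fixed $n$, setting $d$ equal to the maximum, over these finitely many classes, of the bounds produced below then yields the single $d$ claimed in the statement.

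The identification of the quotient with the Hassett space is where the stability analysis of \S\ref{section:destab} enters. For $\gamma$ sufficiently small, any semistable point whose curve is badly degenerate is destabilized, and by Corollary~\ref{cor-geomofstablecurves} a general semistable object is an honest $n$-pointed rational normal curve $(C;p_1,\ldots,p_n)\subset\PP^d$. The constraint forcing $d$ to be large is that the boundary of $\overline{M}_{0,\vec{c}}$ contains maximally degenerate curves with $n-2$ components, whereas a degenerate degree-$d$ rational normal curve breaks into at most $d$ components; so one needs $d\ge n-2$, and I expect to show this suffices. Granting $d\ge n-2$, the one-parameter subgroup bounds of \S\ref{section:destab} say that a subset $S$ of the marked points may coincide, or more generally lie on a common $\PP^m\subseteq\PP^d$, only when $\sum_{i\in S}c_i$ is at most an explicit linear expression in $m$ and $d$; since points of $C$ lie on a proper linear subspace only if some of them coincide, and a short computation fixes the coincidence threshold, after rescaling the weights the surviving condition is exactly Hassett's $\sum_{i\in S}c_i\le 1$. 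The universal rational normal curve over the stable locus then descends to a family over $U_{d,n}\gquot_{L_\gamma}\SL(d+1)$ whose associated pointed nodal curves are $\vec{c}$-stable, inducing a morphism to $\overline{M}_{0,\vec{c}}$. To see it is an isomorphism I would use Theorem~\ref{thm:DM-GIT}: both $U_{d,n}\gquot_{L_\gamma}\SL(d+1)$ and $\overline{M}_{0,\vec{c}}$ are normal projective varieties receiving regular birational morphisms from $\overline{M}_{0,n}$ (the latter being Hassett's reduction morphism), the stability analysis shows these two morphisms contract exactly the same boundary strata $\Delta_{0,S}$, namely those with $\sum_{i\in S}c_i\le 1$, and hence, since $\phi$ has connected fibers with $\phi_\ast\mathcal{O}=\mathcal{O}$, the two morphisms factor through one another.

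For the exhaustion, recall that the walls of $\mathcal{D}_n$ are the hyperplanes $\sum_{i\in S}c_i=1$, and a chamber is a consistent choice of sign of $\sum_{i\in S}c_i-1$ for each $S$, only finitely many of which are realized. By the previous paragraph, once $d\ge n-2$ and $\gamma$ is small the GIT walls of $\Delta^\circ$ in the point-weight directions are precisely these same hyperplanes, so every chamber of $\mathcal{D}_n$ contains a GIT chamber with small curve weight, whose quotient is the corresponding Hassett space. The step I expect to be the crux is exactly the precise matching of GIT-semistability with Hassett-semistability along the boundary: one must verify, uniformly in $S$, that the one-parameter subgroup thresholds collapse to the single condition $\sum_{i\in S}c_i\le 1$; check that the relevant $L_\gamma$ genuinely lie in $\Delta^\circ$ and that no new walls are introduced by curve degenerations as $\gamma\to 0$; and dispatch the linear-subspace conditions (for $m\ge 1$), which have no counterpart in Hassett's theory and must be shown to be implied by the coincidence conditions once $d\ge n-2$. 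This amounts to a careful bookkeeping refinement of the destabilization computations of \S\ref{section:destab}, but it is the heart of the argument.

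Finally, the ``consequently'' clause follows by combining the above with \cite{Moo11}: each Hassett space $\overline{M}_{0,\vec{c}}$ is the log canonical model of $\overline{M}_{0,n}$ with respect to $\sum_i c_i\psi_i$, so running the log minimal model program for $\overline{M}_{0,n}$ with respect to the $\psi$-classes amounts to moving $\vec{c}$ through $\mathcal{D}_n$ and recording the induced maps among the resulting Hassett spaces. Having realized all of them as quotients of a single $U_{d,n}$, and noting that a general path in $\mathcal{D}_n$ between two adjacent Hassett chambers can be taken to cross the corresponding GIT wall of $\Delta^\circ$, which by the matching above is the same hyperplane $\sum_{i\in S}c_i=1$, each step of the program is a VGIT wall-crossing for $U_{d,n}\gquot\SL(d+1)$; the classification of such wall-crossings in \S\ref{section:maps} then exhibits each step as a divisorial contraction, a flip, or a curve-contracting morphism, which is the assertion.
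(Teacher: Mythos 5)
Your overall strategy---realize each Hassett space as a quotient of $U_{d,n}$ for $d$ large, then invoke \cite{Moo11} and the wall-crossing analysis for the ``consequently'' clause---is the right shape, but the core mechanism is backwards, and this is a genuine gap. You propose to take the curve weight $\gamma$ \emph{small}. The paper does the opposite, and must: the isomorphism $U_{d,n}\gquot_{(\gamma,\vec{c})}\SL(d+1)\cong\overline{M}_{0,\vec{c}}$ (Theorem \ref{cor:HassettIso}) requires $\gamma>\max\{\tfrac12,1-c_1,\ldots,1-c_n\}$. The reason is that by Corollary \ref{Singularities} a GIT-stable curve can have a multinodal singularity of multiplicity $m$ whenever $\gamma<\tfrac{1}{m-1}$, and by Proposition \ref{SingularWeight} marked points of total weight up to $1-(m-1)\gamma$ can sit at such a singularity. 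So for small $\gamma$ the semistable locus contains curves with worse-than-nodal singularities and with marked points at singular points; these are not Hassett-stable, the universal family over $U_{d,n}^{ss}$ is not a family of $\vec{c}$-stable curves, and the morphism to $\overline{M}_{0,\vec{c}}$ you want to build from it does not exist. The paper's own example in \S 7.3 makes this concrete: for $n=d=9$ with symmetric weights the quotient is a Hassett space only when $\gamma>\tfrac12$, and is the Boggi space (triple points, points at nodes allowed) when $\gamma$ is small. Your plan to ``dispatch the linear-subspace conditions'' cannot repair this, because the offending curves are honest semistable points, not artifacts of the 1-PS bookkeeping.

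There is a second, related problem: $\gamma$ is not a free parameter. Once you insist the point weights equal the Hassett weights (so that the collision threshold $\sum_{i\in S}c_i\le 1$ comes out right --- your rescaling by $t$ changes this threshold to $\sum_{i\in S}c_i\le 1/t$ and hence changes which Hassett space you model), the normalization $(d-1)\gamma+\sum c_i=d+1$ pins down $\gamma=\frac{d+1-c}{d-1}$, which tends to $1$ as $d\to\infty$ since $c<n$. This is exactly what saves the day: the paper first perturbs $\vec{c}$ within its Hassett chamber so that all $c_i>\epsilon$ for a uniform $\epsilon$ (using the finiteness of chambers), and then chooses $d$ with $\frac{d+1-n}{d-1}>1-\epsilon$, forcing $\gamma>\max\{\tfrac12,1-c_i\}$ simultaneously for all chambers. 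Note that this requires $d$ on the order of $n/\epsilon$, so your proposed bound $d\ge n-2$ (coming from counting components of degenerate curves) is far too weak for Hassett weights near the lower boundary $\sum c_i=2$. Your final paragraph on the log MMP consequence is essentially the intended argument and is fine once the realization statement is repaired.
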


\subsection{Modular compactifications}

In the absence of strictly semistable points, each birational model $U_{d,n}\gquot\SL(d+1)$ is itself a fine moduli space of pointed rational curves.  Moreover, this modular interpretation extends  that of the interior, $M_{0,n}$.  A formalism for such compactifications, in any genus, has been introduced by Smyth in \cite{Smy09}.  The basic idea is to define a modular compactification to be an open substack of the stack of all smoothable curves that is proper over $\Spec\mathbb{Z}$.  Smyth shows that there are combinatorial gadgets, called extremal assignments, that produce modular compactifications---and that in genus zero, they produce all of them.  This result can be thought of as a powerful step toward understanding the modular aspects of the birational geometry of $\overline{M}_{0,n}$.  What remains is to determine the maps between these modular compactifications, and for this we can apply our GIT machinery.

In Proposition \ref{prop:GITass}, we identify the extremal assignment corresponding to each GIT linearization without strictly semistable points.  Although this does not yield all modular compactifications (cf. \S\ref{section:ModNotGIT}), it does yield an extensive class of them.  For linearizations that admit strictly semistable points, the corresponding stack-theoretic quotients $[U_{d,n}^{ss}/\SL(d+1)]$ typically are non-separated Artin stacks---so they are not modular in the strict sense of Smyth.  However, they are close to being modular in that they are weakly proper stacks (as in \cite{ASW11}) parameterizing certain equivalence classes of pointed rational curves.  One might call these ``weakly modular'' compactifications.

Recasting the results of \S\ref{introsection:maps} in this light, we begin to see an elegant structure emerge: \emph{Every open GIT chamber in $\Delta^\circ$ corresponds to a modular compactification of $M_{0,n}$, whereas the walls correspond to weakly modular compactifications.  The wall-crossing maps yield relations between the various Smyth spaces that arise in our GIT construction.}  In other words, the GIT chamber decomposition determines which modular compactifications should be thought of as ``adjacent'' in the space of all such compactifications.

\subsection{Previous constructions} In the early 90s, Kapranov introduced two constructions of $\overline{M}_{0,n}$ that have since played an important role in many situations.  He showed that $\overline{M}_{0,n}$ is the closure in $\Chow(1,n-2, \PP^{n-2})$ of the locus of rational normal curves passing through $n$ fixed points in general position \cite{Kap}.  There exist linearizations such that $U_{n-2,n}\gquot\SL(n-1)\cong \overline{M}_{0,n}$, so setting $d=n-2$ in our construction yields a similar construction to Kapranov's---except that instead of fixing the points, we let them vary and then quotient by the group of projectivities.  Kapranov also showed that $\overline{M}_{0,n}$ is the inverse limit of the GIT quotients $(\PP^1)^n\gquot\SL(2)$, which are precisely the $d=1$ case of our construction \cite{KapChow}.  So in a sense, our construction is inspired by, and yields a common generalization of, both of Kapranov's constructions.

\begin{remark}
Kapranov showed that for both of his constructions, one could replace the relevant Chow variety with a Hilbert scheme and the construction remains.  Similarly, we could have used a Hilbert scheme to define a variant of the incidence locus $U_{d,n}$.  By Corollary \ref{cor-geomofstablecurves}, however, every GIT-semistable curve in $U_{d,n}$ is reduced, so the Hilbert-Chow morphism restricts to an isomorphism over the semistable locus.  Therefore, using an asymptotic linearization on the Hilbert scheme would yield GIT quotients isomorphic to those we consider here with the Chow variety.
\end{remark}

The GIT quotients $(\PP^1)^n\gquot\SL(2)$ have made numerous appearances in the literature beyond Kapranov's inverse limit result---they are even included in Mumford's book \cite{git} as ``an elementary example'' of GIT.  The papers \cite{Sim08,GS} introduce and investigate the $d=2$ case of the GIT quotients in this paper.  In \cite{NoahCB}, the first author introduces and studies GIT quotients parameterizing the configurations of points in projective space that arise in $U_{d,n}$, for $1\le d\le n-3$.  These can be viewed as a special case of the current quotients obtained by setting the linearization on the Chow factor to be trivial.  In fact, the GIT quotients studied here appear to include as special cases all GIT quotients of pointed rational curves that have previously been studied.

\subsection{Outline}
\begin{itemize}
\item[\S2:] We explain the GIT setup and prove some preliminary results.  Among these is the fact that all GIT quotients $U_{d,n} \gquot \SL(d+1)$ with linearization in $\Delta^\circ$ are compactifications of $M_{0,n}$ (Prop \ref{StablePoints}).
\item[\S3:] We develop the main tool for studying semistability in these quotients, a weight function that controls the degrees of components of GIT-stable curves.  Using this function we explicitly determine the GIT walls and chambers (Prop \ref{GITWalls}).
\item[\S4:] We show that the GIT quotients $U_{d,n} \gquot \SL(d+1)$ always receive a birational morphism from $\overline{M}_{0,n}$.  This map factors through a Hassett space $\overline{M}_{0,\vec{c}}$ for a fixed weight datum $\vec{c}$ determined by the linearization (Prop \ref{prop:Hassett}).
\item [\S5:] We provide a modular description of all the GIT quotients $U_{d,n} \gquot \SL(d+1)$ (Thm \ref{ModularDescription}).
\item[\S6:] We describe the rational maps between these spaces arising from variation of GIT.  We provide conditions for such a map to be a divisorial contraction (Cor \ref{DivisorialContraction}), a flip (Cor \ref{Flips}), or to contract a curve (Prop \ref{Regular}).
\item[\S7:] We construct several explicit examples of moduli spaces that arise from our GIT construction.  We show that every Hassett space $\overline{M}_{0,\vec{c}}$, including $\overline{M}_{0,n}$, can be constructed in this way (Thm \ref{cor:HassettIso}) and demonstrate an example of variation of GIT for $\overline{M}_{0,9}$ (\S7.3).  We further demonstrate an example of a flip between two compactifications of $M_{0,n}$.
\end{itemize}

\subsection*{Acknowledgements}  We thank K. Chung, A. Gibney, and J. Starr for several helpful conversations regarding this work.  We thank B. Hassett for suggesting the investigation of this GIT construction as a continuation of M. Simpson's thesis \cite{Sim08}, and we thank the referee for very thorough and helpful comments on the paper.

\section{GIT Preliminaries}

\subsection{The cone of linearizations}\label{section:linearizations}

We are interested in the natural action of $\SL(d+1)$ on $U_{d,n} \subseteq \Chow(1,d,\PP^d)\times (\PP^d)^n$.  Since $\SL(d+1)$ has no characters, the choice of a linearization is equivalent to the choice of an ample line bundle.  Each projective space $\PP^d$ has the hyperplane class $\OO_{\PP^d}(1)$ as an ample generator of its Picard group.  The Chow variety has a distinguished ample line bundle $\OO_{Chow} (1)$ coming from the embedding in projective space given by Chow forms.   Therefore, by taking external tensor products we obtain an $\mathbb{N}^{n+1}$ of ample line bundles on $\Chow(1,d,\PP^d)\times (\PP^d)^n$, which we then restrict to $U_{d,n}$.

It is convenient to use fractional linearizations by tensoring with $\mathbb{Q}$.  Moreover,
since stability is unaffected when a linearized line bundle is replaced by a tensor power, we can work with a transverse cross-section of the cone of linearizations:
\[\Delta := \{ ( \gamma , c_1 , c_2 , \ldots, c_n ) \in \mathbb{Q}_{\ge 0}^{n+1} ~|~ (d-1) \gamma + \sum_{i=1}^n c_i = d+1 \}\]  As we will see (Corollary \ref{SmoothWeight}), this ensures all $c_i \le 1$ whenever the semistable locus is nonempty.  This allows us to relate our construction to previous GIT constructions as well as Hassett's spaces, where the point weights are similarly bounded by 1.  We will later restrict to the case that $\gamma < 1$ and $c_i < 1$ for all $i$.  Note that this forces $n \geq 3$.

\subsection{The Hilbert-Mumford numerical criterion}

Let $\lambda : \mathbb{C}^* \to \SL(d+1)$ be a one-parameter subgroup. As in \cite[2.8]{Mum76}, observe that $\lambda$ is conjugate to a subgroup of the form $diag( t^{r_i -k} )$, where $r_0 \geq r_1 \geq \cdots \geq r_d = 0$ and $k = \frac{\sum r_i}{d+1}$.  Choose new coordinates $x_i$ on $\PP^d$ for which $\lambda$ takes this form.  Given a variety $X\subseteq \PP^d$, let $R$ be its homogeneous coordinate ring and $I\subseteq R[t]$ the ideal generated by $\{ t^{r_i}x_i \}_{0 \leq i \leq d}$.  Following \cite[Lemma 1.3]{Schubert}, we denote by $e_{\lambda} (X)$ the normalized leading coefficient of dim$(R[t]/I^m )_m$, where $R[t] = \oplus_{i=1}^{\infty} R_i [t]$ is the grading on $R[t]$ and the normalized leading coefficient of a polynomial $\sum_{i=0}^N a_i x^i$ is $N! a_N$.

The following result is a crucial first step toward the GIT stability analysis conducted subsequently:

\begin{proposition}
\label{NumericalCriterion}
A pointed curve $(X, p_1 , \ldots , p_n ) \in U_{d,n}$ is semistable with respect to the linearization $(\gamma,c_1,\ldots,c_n)\in \Delta$ if, and only if, for every non-trivial 1-PS $\lambda$ with weights $r_i$ as above,
$$ \gamma e_{\lambda} (X) + \sum c_i e_{\lambda} ( p_i ) \leq ( 1 + \gamma ) \sum r_i .$$
It is stable if and only if these inequalities are strict.
\end{proposition}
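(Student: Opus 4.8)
The plan is to derive this statement from the Hilbert--Mumford numerical criterion for GIT semistability by computing the Mumford weight $\mu^L(x,\lambda)$ of a point $x = (X, p_1, \ldots, p_n)$ in terms of the quantities $e_\lambda(X)$ and $e_\lambda(p_i)$. Recall that for a linearized action, a point $x$ is semistable if and only if $\mu^L(x,\lambda) \ge 0$ for every non-trivial 1-PS $\lambda$, with stability corresponding to strict inequality (see \cite[2.1]{git}); the sign conventions are such that $\mu^L$ is additive in $L$ for external tensor products, so it suffices to treat each of the $n+1$ tensor factors separately and then combine.

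First I would treat the point factors. For a 1-PS $\lambda$ diagonalized in coordinates $x_0, \ldots, x_d$ with weights $r_0 \ge r_1 \ge \cdots \ge r_d = 0$, the Mumford weight of a point $p \in \PP^d$ with respect to $\OO_{\PP^d}(1)$ is, up to sign, $\min\{ r_i : x_i(p) \ne 0\}$. I would reconcile this with the normalization of $e_\lambda$ from \cite[Lemma 1.3]{Schubert}: for a single reduced point, $R[t]/I^m$ in degree $m$ is computed from the ideal generated by $\{t^{r_i} x_i\}$, and its leading coefficient recovers exactly $\min_i\{r_i : x_i(p)\ne 0\}$, so $e_\lambda(p)$ is precisely this minimum weight. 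Thus the contribution of the $i$-th point factor, with its coefficient $c_i$, is $c_i\bigl(e_\lambda(p_i) - \tfrac{1}{d+1}\sum r_j\bigr)$ after accounting for the projective normalization (the $\SL$ versus $\GL$ correction subtracts the average weight). Summing, the point factors contribute $\sum c_i e_\lambda(p_i) - \tfrac{\sum c_i}{d+1}\sum r_j$.

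Next I would handle the Chow factor. The embedding of $\Chow(1,d,\PP^d)$ by Chow forms carries a natural linearization $\OO_{Chow}(1)$, and by Mumford's analysis (\cite[2.8--2.11]{Mum76}, cf. \cite[Lemma 1.3]{Schubert}) the Mumford weight of a one-dimensional cycle $X$ of degree $d$ with respect to $\OO_{Chow}(1)$ and $\lambda$ is, in the normalization used here, $e_\lambda(X) - \tfrac{d+1}{d+1}\cdot\text{(something)}$; more precisely the relevant projective normalization subtracts a multiple of $\sum r_j$ determined by the degree of the Chow embedding, which for degree-$d$ curves in $\PP^d$ gives a correction term proportional to $\sum r_j$ with coefficient $d-1$ after using the cross-section constraint $(d-1)\gamma + \sum c_i = d+1$. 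I expect this bookkeeping of the normalization constant for the Chow factor to be the main obstacle: one must carefully track how the Chow form's multidegree interacts with the passage from $\GL(d+1)$ to $\SL(d+1)$, and verify that the constant works out so that the weighted sum of all $n+1$ factors, set $\ge 0$, rearranges exactly into
\[
\gamma e_\lambda(X) + \sum c_i e_\lambda(p_i) \le (1+\gamma)\sum r_i.
\]
Once the normalization constant is pinned down — using the defining relation of $\Delta$ to simplify $\tfrac{\sum c_i}{d+1} + (\text{Chow correction})\gamma$ into $1+\gamma$ divided by the appropriate factor — the equivalence follows immediately from the Hilbert--Mumford criterion, and the stable case is the verbatim argument with every inequality made strict. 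I would close by remarking that the hypothesis $(X,p_1,\ldots,p_n) \in U_{d,n}$ (so each $p_i$ lies on $X$) is not needed for this computation but is implicit in the ambient space; the criterion is simply the restriction of the Hilbert--Mumford inequality on $\Chow(1,d,\PP^d)\times(\PP^d)^n$ to the closed subvariety $U_{d,n}$.
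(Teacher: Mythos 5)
Your overall strategy is exactly the one the paper uses: decompose the Hilbert--Mumford index of $(X,p_1,\ldots,p_n)$ by linearity into the contributions of the Chow factor and the $n$ point factors, compute each in terms of $e_\lambda$, and then use the defining relation $(d-1)\gamma+\sum c_i=d+1$ of $\Delta$ to collapse the normalization constants into $(1+\gamma)\sum r_i$. The point-factor computation is fine and matches the paper. The genuine gap is precisely the step you flag as the main obstacle: the normalization constant for the Chow factor. You leave it as ``(something)'' and then tentatively assert a correction ``proportional to $\sum r_j$ with coefficient $d-1$,'' which is not correct as stated; moreover the cross-section constraint plays no role in computing this constant --- it enters only when the factors are combined. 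The required input is Mumford's Theorem 2.9 in \cite{Mum76}: for a variety of dimension $r$ and degree $e$ in $\PP^n$, the Hilbert--Mumford index of its Chow point with respect to $\OO_{Chow}(1)$ is $e_\lambda(X)-\frac{(r+1)e}{n+1}\sum r_i$ in the normalization $r_n=0$. For a degree-$d$ curve in $\PP^d$ this gives $\mu_{\lambda}(X)=\gamma\bigl(e_{\lambda}(X)-\frac{2d}{d+1}\sum r_i\bigr)$ for the linearization $(\gamma,\vec{0})$, and then the arithmetic closes:
\[
\frac{2d}{d+1}\gamma+\frac{\sum c_i}{d+1}=\frac{2d\gamma+(d+1)-(d-1)\gamma}{d+1}=1+\gamma .
\]
So the missing piece is a single precise citation plus a two-line computation, but as written your proposal does not establish the constant, and the value you guess would not produce the stated inequality.
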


\begin{proof}
A pointed curve $(X, p_1 , \ldots , p_n )$ is stable (resp. semistable) if and only if, for every 1-PS $\lambda$, the Hilbert-Mumford index $\mu_{\lambda} (X, p_1 , \ldots , p_n )$ is negative (resp. nonpositive). By \cite[Theorem 2.9]{Mum76} and its proof, we see that for the linearization $( \gamma , \vec{0} )$ we have
$$ \mu_{\lambda} (X) = \gamma (e_{\lambda} (X) - \frac{2d}{d+1} \sum r_i ).$$
Similarly, for the linearization $( 0, \vec{c} )$, we have
$$ \mu_{\lambda} ( p_1 , \ldots , p_n ) = \sum c_i e_{\lambda} ( p_i ) - \frac{\sum c_i}{d+1} \sum r_i .$$
By the linearity of the Hilbert-Mumford index, we therefore have
$$ \mu_{\lambda} (X, p_1 , \ldots , p_n ) = \gamma e_{\lambda} (X) + \sum c_i e_{\lambda} ( p_i ) - ( \frac{2d}{d+1} \gamma + \frac{\sum c_i}{d+1} ) \sum r_i $$
$$ = \gamma e_{\lambda} (X) + \sum c_i e_{\lambda} ( p_i ) - ( 1 + \gamma ) \sum r_i, $$
where the last equality follows from the assumption that the linearization vector lies in the cross-section $\Delta$ (cf. \S\ref{section:linearizations}).
\end{proof}

\subsection{Destabilizing one-parameter subgroups}\label{section:destab}

There is one particularly simple type of 1-PS that is sufficient for most of our results.

\begin{proposition}\label{Prop:Flag1PS}
Consider the $k$-dimensional linear subspace $V := V(x_{k+1},x_{k+2},\ldots,x_d) \subset \PP^d$, and let $\lambda_V$ be the 1-PS with weight vector $(1,1, \ldots 1, 0, \ldots , 0)$, where the first $k+1$ weights are all one.  For $X \in \Chow(1,d,\PP^d)$, write $X = X(V) \cup Y$, where $X(V)$ is the union of irreducible components of $X$ contained in $V$.  Then $X$ is semistable with respect to $\lambda_V$ if and only if
$$ \gamma (2 \deg X(V) + e_{\lambda} (Y)) + \sum_{p_i \in V} c_i \leq (k+1)(1 + \gamma ).$$
\end{proposition}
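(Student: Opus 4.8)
The plan is to apply the numerical criterion of Proposition \ref{NumericalCriterion} directly to the single one-parameter subgroup $\lambda_V$, so the whole task reduces to computing the three ingredients $e_{\lambda_V}(X)$, $e_{\lambda_V}(p_i)$, and $\sum r_i$ for this specific 1-PS and checking that the resulting inequality matches the claimed one. Since the weight vector of $\lambda_V$ is $(1,\ldots,1,0,\ldots,0)$ with $k+1$ ones, we immediately get $\sum r_i = k+1$, so the right-hand side $(1+\gamma)\sum r_i$ becomes $(k+1)(1+\gamma)$, which already accounts for the shape of the bound. For a marked point $p_i$, the quantity $e_{\lambda_V}(p_i)$ is just the $\lambda_V$-weight of $p_i$ in the sense of the Hilbert--Mumford index for points in $\PP^d$: it is $0$ if $p_i\in V$ (all nonzero coordinates of $p_i$ lie among $x_0,\ldots,x_k$, wait — one must recall the normalization) and $1$ otherwise. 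I would phrase this carefully: with the normalization $r_0\ge\cdots\ge r_d=0$ used in the paper, $e_{\lambda}(p_i)=\min\{r_j : x_j(p_i)\neq 0\}$, so $e_{\lambda_V}(p_i)=0$ exactly when $p_i$ has a nonzero coordinate with index $>k$, i.e. when $p_i\notin V$, and $e_{\lambda_V}(p_i)=1$ when $p_i\in V$. Hence $\sum c_i e_{\lambda_V}(p_i)=\sum_{p_i\in V}c_i$, matching the middle term of the claimed inequality.

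The substantive computation is $e_{\lambda_V}(X)$ for the reducible curve $X = X(V)\cup Y$. Here I would invoke the description of $e_\lambda$ via the leading coefficient of $\dim(R[t]/I^m)_m$ from \cite{Schubert} together with its geometric interpretation: for a 1-PS supported on the coordinate subspace $V$, $e_{\lambda_V}(X)$ decomposes as a sum of contributions from the components of $X$, where a component entirely inside $V$ contributes twice its degree (its generic point already has all the ``heavy'' coordinates, so it sees the full weight on a one-dimensional family) and a component $Z$ not contained in $V$ contributes $\deg(Z\cap V)$, the length of its scheme-theoretic intersection with the fixed locus. Summing over components of $Y$ gives $\deg(Y\cap V)$, and over components of $X(V)$ gives $2\deg X(V)$, so $e_{\lambda_V}(X) = 2\deg X(V) + \deg(Y\cap V)$. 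Assembling: the inequality $\gamma e_{\lambda_V}(X) + \sum c_i e_{\lambda_V}(p_i) \le (1+\gamma)\sum r_i$ becomes exactly $\gamma(2\deg X(V) + \deg(Y\cap V)) + \sum_{p_i\in V}c_i \le (k+1)(1+\gamma)$.

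The main obstacle, and the step I would be most careful about, is justifying the formula $e_{\lambda_V}(X) = 2\deg X(V) + \deg(Y\cap V)$ rigorously rather than heuristically. The cleanest route is probably to reduce to components: $e_\lambda$ is additive over the cycle $X$ (this follows from the interpretation of $e_\lambda(X)$ as computing an intersection-theoretic or Hilbert-function quantity that is linear in $X$ as a cycle, which is why Mumford's $\mu_\lambda$ is linear), so it suffices to treat an irreducible curve $Z$ and split into the two cases $Z\subseteq V$ and $Z\not\subseteq V$. For $Z\not\subseteq V$, one identifies $e_{\lambda_V}(Z)$ with the degree of the intersection cycle $Z\cdot V$, using that $\lambda_V$ has weight $1$ on the $V$-directions and $0$ elsewhere so the relevant filtration of the coordinate ring measures order of vanishing along $V$; for $Z\subseteq V$, a direct computation with the homogeneous coordinate ring of a curve lying in $V(x_{k+1},\ldots,x_d)$ shows every generator $x_j t^{r_j}$ with $j\le k$ carries weight $1$, doubling the contribution relative to the generic transverse case and yielding $2\deg Z$. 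Once these two local computations are in hand the proposition follows by linearity and the numerical criterion; the rest is bookkeeping.
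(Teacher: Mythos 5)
Your proposal is correct and takes essentially the same route as the paper: the paper's entire proof is a citation of Proposition \ref{NumericalCriterion} together with \cite[Lemma 1.2]{Schubert}, and your computation of $\sum r_i = k+1$, of $e_{\lambda_V}(p_i)$ (which you correctly resolve as $1$ for $p_i \in V$ and $0$ otherwise), and of $e_{\lambda_V}(X) = 2\deg X(V) + \deg(Y\cap V)$ is precisely the content packaged in that cited lemma. The only difference is that you sketch a proof of the $e_{\lambda_V}(X)$ formula (additivity over components plus the two local cases) where the paper simply cites it.
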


\begin{proof}
This follows from Proposition \ref{NumericalCriterion} and \cite[Lemma 1.2]{Schubert}.
\end{proof}

In most cases we will take $V$ to be a subspace containing some component of $X$, with each of the other irreducible components of $X$ meeting this subspace transversally.  In this case, $e_{\lambda} (Y) = \sum_{Z \subset Y} \vert Z \cap V \vert$, where the sum is over the irreducible components of $Y$.

We first consider the extreme cases $k=d-1$ and $k=0$.  The former leads to instability of degenerate curves, whereas the latter leads to upper bounds on the weight of marked points at smooth and singular points of semistable curves.

\begin{proposition}
\label{prop-degcurveunstable}
A pointed curve $(X,p_1,\ldots,p_n)\in U_{d,n}$ is unstable if $X$ is contained in a hyperplane $\PP^{d-1} \subset \PP^{d}$.
\end{proposition}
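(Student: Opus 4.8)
The plan is to apply the flag one-parameter subgroup from Proposition \ref{Prop:Flag1PS} in the extreme case $k = d-1$, and show that the resulting semistability inequality is violated by every linearization in $\Delta$. So suppose $X \subset \PP^{d-1} = V(x_d)$, and take $V = \PP^{d-1}$ with the associated 1-PS $\lambda_V$ having weight vector $(1, \ldots, 1, 0)$. First I would observe that since $X$ is an honest degree-$d$ curve in $\PP^d$ that happens to lie entirely in the hyperplane $V$, we have $X(V) = X$, so $\deg X(V) = d$, the residual curve $Y$ is empty, and $\deg(Y \cap V) = 0$; moreover every marked point $p_i$ lies on $X \subset V$, so $\sum_{p_i \in V} c_i = \sum_{i=1}^n c_i$. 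Plugging into the inequality of Proposition \ref{Prop:Flag1PS}, semistability with respect to $\lambda_V$ would require
$$ \gamma \cdot 2d + \sum_{i=1}^n c_i \leq d(1 + \gamma).$$

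The next step is to derive a contradiction from the defining relation of the cross-section $\Delta$, namely $(d-1)\gamma + \sum_{i=1}^n c_i = d+1$, equivalently $\sum_{i=1}^n c_i = d + 1 - (d-1)\gamma$. Substituting this into the left-hand side gives $2d\gamma + d + 1 - (d-1)\gamma = (d+1)\gamma + d + 1$, so the semistability inequality becomes $(d+1)\gamma + d + 1 \leq d + d\gamma$, i.e. $\gamma + 1 \leq 0$. Since $\gamma \geq 0$ this is impossible, so $X$ is not semistable with respect to $\lambda_V$, hence $(X, p_1, \ldots, p_n)$ is unstable. (One should double-check the degenerate case $d = 1$: a "hyperplane" is then a point, a degree-one curve cannot be contained in a point, so the hypothesis is vacuous and there is nothing to prove; for $d \geq 2$ the 1-PS $\lambda_V$ is genuinely non-trivial, as required.)

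I do not anticipate a serious obstacle here — the computation is short and forced once the right 1-PS is chosen. The only point needing a little care is the bookkeeping in Proposition \ref{Prop:Flag1PS} when $X$ is \emph{reducible}: if $X = X(V) \cup Y$ with some components inside $V$ and some not, the formula still applies, but in our situation the hypothesis "$X$ is contained in a hyperplane" forces the hyperplane $V$ to contain \emph{all} of $X$, so $Y = \emptyset$ regardless of how $X$ decomposes, and the clean computation above goes through verbatim. It is also worth noting for the reader that $\Delta$ was precisely the cross-section normalization that makes this argument work uniformly in $d$; with a different normalization one would merely rescale and reach the same conclusion.
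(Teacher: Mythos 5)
Your proposal is correct and follows essentially the same route as the paper: both apply the flag 1-PS of Proposition \ref{Prop:Flag1PS} with $k=d-1$ and $V=V(x_d)$, note $X(V)=X$, $Y=\emptyset$, $\sum_{p_i\in V}c_i=d+1-(d-1)\gamma$, and observe that the resulting value $(d+1)(1+\gamma)$ exceeds the bound $d(1+\gamma)$. Your extra remarks on the $d=1$ case and on reducibility are harmless additional care but not needed.
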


\begin{proof}
We may assume that $\PP^{d-1} = V(x_{d})$.  Consider the 1-PS in Proposition \ref{Prop:Flag1PS} with $V := \PP^{d-1}$.  Clearly $X(V)=X$, $Y=\emptyset$, and $\sum_{p_i\in V}c_i = \sum_{i=1}^n c_i = d+1 - (d-1)\gamma$, so
\begin{eqnarray*}
	\gamma(2\deg X(V)+ e_{\lambda} (Y)) + \sum_{p_{i}\in V}c_{i}
	&=&	2d\gamma + (d+1) - (d-1)\gamma\\
	&=& (d+1)(1+\gamma) > 	d(1+\gamma),
\end{eqnarray*}
hence $\lambda_V$ destabilizes $(X,p_1,\ldots,p_n)$.
\end{proof}

Consequently, GIT-semistable curves are geometrically quite nice:

\begin{corollary}
\label{cor-geomofstablecurves}
A semistable pointed curve
$(X, p_{1}, \cdots, p_{n})$ has the following properties:
\begin{enumerate}
	\item Each irreducible component is a rational normal curve in the projective space that it spans.
	\item The singularities are at worst multinodal (analytically locally the union of
	coordinate axes in $\mathbb{C}^{k}$).
	\item Every connected subcurve of degree $e$
	spans a $\PP^{e}$.
\end{enumerate}
\end{corollary}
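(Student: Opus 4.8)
The plan is to deduce all three properties from the single fact, established in Proposition \ref{prop-degcurveunstable}, that a semistable pointed curve cannot have any irreducible component (indeed, any connected subcurve) that fails to span the projective space it sits inside in a nondegenerate way. More precisely, I would first observe that if $Z \subseteq X$ is any connected subcurve spanning a linear subspace $\PP^e \subseteq \PP^d$, then the restriction of the Chow/stability data to $\PP^e$ lets us apply Proposition \ref{prop-degcurveunstable} (or directly Proposition \ref{Prop:Flag1PS} with $V = \PP^e$) inside $\PP^e$: if $\deg Z > e$, a flag 1-PS adapted to a hyperplane of $\PP^e$ containing too much of $Z$ would destabilize $(X, p_1, \ldots, p_n)$. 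This forces $\deg Z \ge e$ always and pins down exactly when equality can fail; carrying this out carefully for the appropriate flag subspaces is the technical core of the argument.

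Concretely, for part (3) I would take a connected subcurve $Z$ of degree $e$, let $\PP^m = \langle Z \rangle$ be its linear span, and argue $m = e$. The inequality $m \le e$ is the standard fact that a connected nondegenerate curve of degree $e$ in $\PP^m$ satisfies $m \le e$ (each irreducible piece of degree $e_j$ spans at most a $\PP^{e_j}$, and gluing connected pieces along points can only add up, not exceed, $\sum e_j = e$). For the reverse inequality $m \ge e$: if $m < e$, then apply Proposition \ref{Prop:Flag1PS} with $V = \PP^{m}$ a linear subspace chosen so that $X(V) \supseteq Z$; since $2\deg X(V) \ge 2e > 2m \ge m+1$ (using $e > m \ge 1$), the $\gamma$-term alone already violates the semistability inequality $\gamma(2\deg X(V) + \deg(Y\cap V)) + \sum_{p_i\in V}c_i \le (m+1)(1+\gamma)$, a contradiction (here I use $\gamma \ne 0$; the case $\gamma = 0$ must be handled separately, but there $d=1$ forces the statement to be vacuous or trivial, so this is not a real obstacle). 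This proves (3), and in particular each irreducible component of degree $e$ spans exactly a $\PP^e$, i.e.\ is a rational normal curve — that is part (1).

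For part (2), I would use (3) to localize at a singular point $q \in X$. Let $Z_1, \ldots, Z_k$ be the local branches of $X$ at $q$; since each $Z_j$ is smooth (being part of a rational normal curve by (1)) and any two distinct branches $Z_i, Z_j$ together form a connected degree-$2$ subcurve, part (3) says $\langle Z_i \cup Z_j \rangle = \PP^2$, so the two tangent lines at $q$ are distinct. More generally, the span of any subset of $j$ branches through $q$ has dimension $j$ (apply (3) to a connected degree-$j$ subcurve built from those branches through $q$, or a small deformation thereof), which means the $k$ tangent directions at $q$ are linearly independent. Linearly independent smooth branches with independent tangent directions meeting at a point are, analytically locally, precisely the union of $k$ coordinate axes in $\mathbb{C}^k$, giving the multinodal singularity type; reducedness of components follows since rational normal curves are smooth.

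I expect the main obstacle to be the bookkeeping in part (3): carefully choosing the flag subspace $V$ in Proposition \ref{Prop:Flag1PS} so that $X(V)$ really does contain the given subcurve $Z$ and extracting a genuine contradiction from the numerical inequality, while correctly tracking the contribution of the marked points and of $Y \cap V$, and separately dispatching the boundary case $\gamma = 0$ (equivalently $d = 1$). Once part (3) is in hand, parts (1) and (2) are essentially formal consequences.
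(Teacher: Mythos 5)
The paper itself disposes of this corollary with a one-line citation to the classical structure theory of connected curves of degree $d$ spanning $\PP^d$ (\cite[Lemma 13.1]{Art76}); the only GIT input is Proposition \ref{prop-degcurveunstable}, which guarantees non-degeneracy of $X$. Your proposal instead tries to prove part (3) by a further destabilization argument, and that step is where it breaks. If $Z$ is a connected subcurve of degree $e$ spanning $V=\PP^m$ with $m<e$, the flag 1-PS $\lambda_V$ gives the semistability condition $\gamma(2\deg X(V)+\deg(Y\cap V))+\sum_{p_i\in V}c_i\le(m+1)(1+\gamma)$, and you claim the $\gamma$-term alone violates this because $2\deg X(V)\ge 2e>m+1$. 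But the right-hand side is $(m+1)+\gamma(m+1)$, not $\gamma(m+1)$: for the $\gamma$-term alone to exceed it you would need $2\gamma e>(m+1)(1+\gamma)$, which for $e=m+1$ reads $\gamma>1$ --- exactly the regime the paper excludes. Concretely, with $\gamma=1/2$ and $e=m+1$ the $\gamma$-term is $m+1$ while the right side is $\tfrac{3}{2}(m+1)$, and the marked-point term cannot rescue you since a proper subcurve may carry arbitrarily little weight; so this 1-PS simply does not destabilize. (Your aside that $\gamma=0$ forces $d=1$ is also off: $\gamma$ is the weight on the Chow factor and is independent of $d$.)

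The statement is nonetheless true, and the correct route is the one you already half-use for the direction $m\le e$: that inequality, applied to every irreducible component and assembled along the connectedness of $X$, gives $d=\dim\langle X\rangle\le\sum_j\dim\langle X_j\rangle\le\sum_j\deg X_j=d$ once Proposition \ref{prop-degcurveunstable} forces $\langle X\rangle=\PP^d$. Equality throughout then yields all three assertions at once, with no further 1-PS needed: each component is a rational normal curve in its span; every connected subcurve of degree $e$ spans exactly a $\PP^e$ (for the reverse inequality apply $m\le e$ to the connected components of $\overline{X\setminus Z}$ and compare with $\dim\langle X\rangle=d$); the cycle is reduced; and the spans of the branches through a singular point meet pairwise only at that point, forcing independent tangent directions and hence the multinodal local model. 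Your deduction of (2) from (1) and (3) is essentially fine; it is only the destabilization step in (3) that must be replaced by this dimension count.
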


\begin{proof}
It is proved in \cite[Lemma 13.1]{Art76} that these properties hold for all non-degenerate curves of degree $d$ in $\PP^d$.
\end{proof}

By setting $k=0$ in Proposition \ref{Prop:Flag1PS}, we obtain the following:

\begin{proposition}
\label{SingularWeight}
The total weight of the marked points at a singularity of multiplicity $m$ on a GIT-stable curve cannot exceed $1-(m-1) \gamma$.
\end{proposition}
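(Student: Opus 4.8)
The plan is to apply Proposition \ref{Prop:Flag1PS} with $k=0$, i.e. to the $0$-dimensional linear subspace $V = \{q\}$, where $q$ is the singular point in question, and $\lambda_V$ is the $1$-PS with weight vector $(1,0,\ldots,0)$. First I would arrange coordinates so that the singular point $q$ is the coordinate point $[1:0:\cdots:0] = V(x_1,\ldots,x_d)$. Since $q$ is a singular point of multiplicity $m$, by Corollary \ref{cor-geomofstablecurves} the curve $X$ near $q$ looks analytically like a union of $m$ coordinate axes, so $m$ branches of $X$ pass through $q$. None of the irreducible components of $X$ is contained in $V=\{q\}$ (a point is not a rational normal curve of positive degree), so in the notation of Proposition \ref{Prop:Flag1PS} we have $X(V) = \emptyset$ and $Y = X$, hence $\deg X(V) = 0$.

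The key computation is to identify $\deg(Y\cap V) = \deg(X \cap \{q\})$ with the multiplicity $m$ of the singularity. Each of the $m$ branches through $q$ contributes $1$ to the intersection multiplicity of $X$ with the point $q$ — since each component is a rational normal curve in its span and meets the coordinate point transversally in the relevant sense (the branches are locally coordinate axes) — so the scheme-theoretic length of $X \cap V$ at $q$ is exactly $m$. This is the step I expect to require the most care: one must be sure that "degree of $Y \cap V$" in Schubert's sense \cite{Schubert} is computing the local intersection length, and that the multinodal structure from Corollary \ref{cor-geomofstablecurves} indeed gives length precisely $m$ rather than something larger (which would only strengthen the instability and hence the bound, but the sharp statement needs equality).

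With these identifications, Proposition \ref{Prop:Flag1PS} (with $k=0$) says that semistability with respect to $\lambda_V$ requires
\[
\gamma \cdot m + \sum_{p_i = q} c_i \leq 1 + \gamma.
\]
For a \emph{stable} curve the inequality must be strict, but more importantly I would note that the sharp claim is obtained by rearranging: the total weight $\sum_{p_i = q} c_i$ of marked points lying at $q$ must satisfy
\[
\sum_{p_i = q} c_i \leq 1 + \gamma - m\gamma = 1 - (m-1)\gamma,
\]
which is exactly the assertion. (If one wants the strict version for stable curves the same manipulation gives $\sum_{p_i=q} c_i < 1-(m-1)\gamma$, but the stated bound is the non-strict one, which holds a fortiori.) The only genuine content beyond invoking the earlier proposition is the geometric input that a multiplicity-$m$ singularity forces $\deg(Y\cap V) = m$, so I would make sure to state that clearly, citing Corollary \ref{cor-geomofstablecurves}.
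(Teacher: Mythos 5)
Your proposal is correct and follows essentially the same route as the paper: apply Proposition \ref{Prop:Flag1PS} with $k=0$ at the singular point, note $X(V)=\emptyset$ and $\deg(Y\cap V)=\mu_p X=m$, and rearrange the resulting inequality. The paper simply asserts $\deg(Y\cap p)=\mu_p X=m$ where you elaborate on why the multinodal local structure gives this; otherwise the arguments coincide.
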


\begin{proof}
Suppose the singularity occurs at the point $p = (1,0, \ldots , 0)$ and set $k=0$.  Then $X(p) = \emptyset$ and $e_{\lambda} (Y) = \mu_p X = m$.  If $X$ is stable, then by Proposition \ref{Prop:Flag1PS} we have
$$ \gamma m + \sum_{p_i = p} c_i < 1 + \gamma,$$
from which the result follows.
\end{proof}

\begin{corollary}
\label{SmoothWeight}
The total weight of the marked points at a smooth point, or indeed at any point, of a GIT-stable curve cannot exceed 1.
\end{corollary}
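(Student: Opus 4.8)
The plan is to bootstrap off Proposition~\ref{SingularWeight}, which already settles the case of a singular point, and to observe that the very same one-parameter subgroup handles a smooth point, with the bound merely degenerating to exactly~$1$. In other words, I would read ``smooth point'' as ``singularity of multiplicity $m=1$'' and re-run the computation of Proposition~\ref{SingularWeight} in that degenerate case.

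Concretely, first I would fix a smooth point $q$ of a GIT-stable curve $(X,p_1,\ldots,p_n)$ and, after acting by an element of $\SL(d+1)$, assume $q=(1,0,\ldots,0)$. Then I apply Proposition~\ref{Prop:Flag1PS} (equivalently, Proposition~\ref{NumericalCriterion} for the corresponding flag 1-PS) with $k=0$ and $V=\{q\}$. No irreducible component of $X$ is a single point, so $X(V)=\emptyset$ and $Y=X$; and since $q$ is a smooth point of the reduced curve $X$ (Corollary~\ref{cor-geomofstablecurves}), the local intersection multiplicity is $\deg(Y\cap V)=\mu_q X=1$. Plugging these into the inequality of Proposition~\ref{Prop:Flag1PS} and using that the inequality is strict for stable points gives
\[ \gamma\cdot 1+\sum_{p_i=q}c_i<1+\gamma, \]
hence $\sum_{p_i=q}c_i<1$, which in particular yields the desired $\le 1$. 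For the ``hence at any point'' clause I would simply invoke Proposition~\ref{SingularWeight} directly at a singular point of multiplicity $m\ge 2$: the total weight of the marked points there is at most $1-(m-1)\gamma$, and since $\gamma\ge 0$ on the cross-section $\Delta$ this is $\le 1$. Combining the two cases covers every point of $X$.

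I do not anticipate a genuine obstacle here, since all of the substance is already contained in Propositions~\ref{Prop:Flag1PS} and~\ref{SingularWeight}; the corollary is essentially the observation that the function $m\mapsto 1-(m-1)\gamma$ is maximized at $m=1$ when $\gamma\ge 0$. The only steps needing a word of justification are that a GIT-stable $X$ is reduced --- so that its local multiplicity at a smooth point is $1$ --- and that no component of $X$ is collapsed onto $q$; both are immediate from Corollary~\ref{cor-geomofstablecurves}, whose components are rational normal curves of positive degree.
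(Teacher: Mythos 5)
Your proposal is correct and matches the paper's (implicit) argument: the corollary is just the $m=1$ case of Proposition~\ref{SingularWeight}, i.e.\ the same $k=0$ flag 1-PS computation with local multiplicity $1$ at a smooth point, combined with the observation that $1-(m-1)\gamma\le 1$ for $\gamma\ge 0$ handles singular points. Nothing further is needed.
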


\begin{corollary}
\label{Singularities}
A GIT-stable curve cannot have a singularity of multiplicity $m$ unless $\gamma < \frac{1}{m-1}$.
\end{corollary}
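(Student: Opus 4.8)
The plan is to read this off directly from Proposition \ref{SingularWeight}, or more precisely from the strict inequality that appears inside its proof. Suppose $(X,p_1,\ldots,p_n)\in U_{d,n}$ is GIT-stable and has a singularity of multiplicity $m$ at a point $p$; since $p$ is a genuine singular point we have $m\ge 2$, so $\frac{1}{m-1}$ makes sense. First I would recall the computation from the proof of Proposition \ref{SingularWeight}: placing the singularity at $(1,0,\ldots,0)$ and applying Proposition \ref{Prop:Flag1PS} with $k=0$, the fact that $X$ is \emph{stable} (hence satisfies the strict form of the numerical criterion, by Proposition \ref{NumericalCriterion}) forces
\[ \gamma m + \sum_{p_i = p} c_i < 1 + \gamma. \]
Then, because every weight $c_i$ is nonnegative, the sum $\sum_{p_i=p}c_i$ is $\ge 0$ and may simply be dropped, yielding $\gamma m < 1 + \gamma$, i.e.\ $(m-1)\gamma < 1$. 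Dividing by $m-1>0$ gives $\gamma < \frac{1}{m-1}$, which is exactly the assertion.

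The one point to be careful about is that one must invoke the \emph{strict} inequality coming from stability rather than the non-strict bound in the statement of Proposition \ref{SingularWeight}: a priori no marked points need lie at the singularity, so $\sum_{p_i=p}c_i$ could be $0$, and combining the non-strict bound ``total weight $\le 1-(m-1)\gamma$'' with ``total weight $\ge 0$'' would only give the weaker conclusion $\gamma \le \frac{1}{m-1}$. Since the proof of Proposition \ref{SingularWeight} already produces the strict inequality, there is no real obstacle, and the argument is a one-line deduction.
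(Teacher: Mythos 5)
Your proof is correct and follows essentially the same route as the paper, whose entire proof is the remark that the minimum total weight at a point is zero, combined with Proposition \ref{SingularWeight}. Your extra care in tracing back to the strict inequality $\gamma m + \sum_{p_i = p} c_i < 1 + \gamma$ inside that proposition's proof is warranted, since the non-strict bound in its statement would literally only yield $\gamma \le \frac{1}{m-1}$.
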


\begin{proof}
This follows from the fact that the minimum total weight at a point is zero.
\end{proof}

\begin{corollary}\label{cor:smoothcurves}
If $\gamma \ge 1$, then every GIT-stable curve is smooth.
\end{corollary}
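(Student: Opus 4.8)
The plan is to argue by contradiction: assume $(X,p_1,\dots,p_n)$ is GIT-semistable for some $(\gamma,c_1,\dots,c_n)\in\Delta$ with $\gamma\ge 1$, suppose $X$ is singular, and produce a violation of the numerical criterion of Proposition \ref{NumericalCriterion}.

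The first step is to use Corollary \ref{cor-geomofstablecurves} to reduce to a local statement: every singular point of $X$ is multinodal, so at any singularity $p$ the curve has multiplicity $m:=\mu_p X\ge 2$. The natural destabilizer is the flag $1$-PS $\lambda_V$ of Proposition \ref{Prop:Flag1PS} with $k=0$ and $V=\{p\}$ — precisely the subgroup used in the proof of Proposition \ref{SingularWeight}. Since $X(V)=\emptyset$ and $\deg(Y\cap V)=\mu_p X=m$, semistability forces
\[\gamma m+\sum_{p_i=p}c_i\le 1+\gamma,\]
and as all $c_i\ge 0$ this gives $(m-1)\gamma\le 1$, i.e. $\gamma\le\frac{1}{m-1}$. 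Because $m\ge 2$ this already rules out $\gamma>1$; it is just the semistable (nonstrict) analogue of Corollary \ref{Singularities}, and it settles everything except the boundary value $\gamma=1$.

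The case $\gamma=1$ is where I expect the real difficulty to lie. There the inequality above degenerates: it forces $m=2$ (an honest node, at which two components $X_1,X_2$ — each a rational normal curve by Corollary \ref{cor-geomofstablecurves} — meet) and forces the node to carry no marked points, but it does not by itself produce a contradiction, so the $k=0$ subgroup is insufficient. To push through I would combine it with the flag $1$-PS for $V=\mathrm{span}(X_j)=\PP^{e_j}$, where $e_j=\deg X_j$ (Proposition \ref{Prop:Flag1PS} with $k=e_j$): since $X_j\subseteq V$ and the other branch meets $V$ at the node, one obtains an upper bound on the weight supported on $X_j$, for $j=1,2$. Playing these bounds against the relation $\sum_i c_i=2$ (which is what $\Delta$ says when $\gamma=1$) together with the vanishing of the weight at the node should constrain the configuration enough to extract a genuine destabilizing subgroup — plausibly one with two distinct nonzero weights adapted to the plane spanned by the two branch tangents at the node. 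Verifying that such a subgroup has strictly positive Hilbert-Mumford index, rather than merely certifying that the curve is strictly semistable, is the delicate point, and the one on which I would concentrate.
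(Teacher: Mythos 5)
Your first paragraph reproduces exactly the paper's (implicit) argument: Corollary \ref{cor:smoothcurves} is stated without proof as an immediate consequence of Proposition \ref{SingularWeight} and Corollary \ref{Singularities}, i.e.\ of the $k=0$ subgroup of Proposition \ref{Prop:Flag1PS}, and that computation disposes of $\gamma>1$, of singularities of multiplicity $m\ge 3$ at $\gamma=1$, and of nodes carrying positive marked weight at $\gamma=1$. Up to that point your argument is correct and identical to the paper's. You have also correctly spotted the one case this leaves open for \emph{semistable} curves: $\gamma=1$ and an unmarked node, where the $k=0$ subgroup yields only the equality $2\gamma=1+\gamma$.

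That last case is, however, a genuine gap in your write-up — you only sketch a strategy and concede that the decisive verification is missing — and the strategy is unlikely to succeed, because such curves need not be unstable. Take $d=2$, $X=\{x_0=0\}\cup\{x_2=0\}\subset\PP^2$ with node at $e_1=(0,1,0)$, marked points of total weight $1$ at $e_0$ and total weight $1$ at $e_2$ (note $\Delta$ forces $\sum c_i=2$ when $\gamma=1$, $d=2$). Using additivity of $e_\lambda$ over irreducible components and $e_\lambda(L)=r_i+r_j$ for the coordinate line through $e_i,e_j$ (both consistent with Proposition \ref{Prop:Flag1PS}), one finds $\mu_\lambda=1\cdot(r_0+2r_1+r_2)+r_0+r_2-2(r_0+r_1+r_2)=0$ for \emph{every} one-parameter subgroup diagonal in these coordinates — in particular for every flag subgroup adapted to the node and to the spans of the two branches, which are precisely the subgroups you propose to combine. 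Moreover this configuration is the limit, under such a subgroup with vanishing Hilbert--Mumford index, of a smooth conic carrying the same weights, so it is strictly semistable whenever that conic is semistable. The honest conclusion is that the statement should be read with ``stable'' in place of ``semistable'' at $\gamma=1$ (which is in fact how the paper quotes it in the proof of Proposition \ref{GammaEqualsOne}), or restricted to $\gamma>1$; under either reading your first paragraph, with the strict inequality supplied by stability, already constitutes a complete proof, and the case you single out as delicate should be abandoned rather than pursued.
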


It would be nice at this point to have a result saying that a pointed curve $(X, p_1 , \ldots , p_n ) \in U_{d,n}$ is semistable if and only if, for all subcurves $Y \subset X$, the degree of $Y$ satisfies some formula involving $\gamma$, the weights of the marked points on $Y$, and the number of intersection points $\vert Y \cap \overline{X \smallsetminus Y} \vert$.  As we will see in Proposition \ref{DegreeOfTails}, such a formula exists in the case that $Y$ is a tail of $X$ -- that is, when $\vert Y \cap \overline{X \smallsetminus Y} \vert = 1$.  When $\vert Y \cap \overline{X \smallsetminus Y} \vert > 1$, however, the degree of $Y$ also depends on the distribution of marked points amongst the connected components of $\overline{X \smallsetminus Y}$, as will be shown in Proposition \ref{DegreeOfComponents}.  This is enough to describe a satisfactory stability condition, as we do in Proposition \ref{prop:stabilitydescription}.

\subsection{Existence of a stable point}\label{section:StableExists}
To ensure that GIT quotients of $U_{d,n}$ are compactifications of $M_{0,n}$, it suffices to prove that rational normal curves with configurations of distinct points are stable.  We prove this in several steps.  By Corollary \ref{cor:smoothcurves}, the quotients with $\gamma \ge 1$ are rather uninteresting, so we assume henceforth that $\gamma < 1$.  We begin with the simple case where all of the weights $c_i$ are relatively small.

\begin{lemma}
\label{EasyCase}
Let $(\gamma , \vec{c}) \in \Delta$ satisfy $\gamma <1$ and $0 < c_i < 1 - \gamma $ $\forall i$.  Then every non-degenerate smooth rational curve with distinct marked points is stable.
\end{lemma}

\begin{proof}
Let $X \subset \PP^d$ be a rational normal curve and $p_1 , \ldots , p_n$ distinct points of $X$.  Since all rational normal curves in $\PP^d$ are projectively equivalent, it suffices to show that $(X, p_1 , \ldots , p_n ) \in U_{d,n}$ is stable for the given linearization.  We will show that $(X, p_1 , \ldots , p_n )$ is stable with respect to the linearization $(0, \vec{c})$ and semistable with respect to the linearization $( \gamma , \vec{0} )$.  It then follows from the Hilbert-Mumford numerical criterion that $(X, p_1 , \ldots , p_n )$ is stable with respect to the linearization $( \gamma , \vec{c} )$.

A rational normal curve has reduced degree 1, which is the minimum possible amongst all non-degenerate curves \cite[Theorem 2.15]{Mum76}.  It follows that $X$ is linearly semistable, hence by \cite[Theorem 4.12]{Mum76} it is semistable with respect to the linearization $( \gamma , \vec{0} )$.  Now, let $V \subset \PP^d$ be a $k$-dimensional linear space.  Since any collection of $n$ distinct points on a rational normal curve are in general linear position, we see that
$$ \sum_{p_i \in V} c_i \leq \sum_{p_i \in V} (1- \gamma ) \leq (k+1)(1- \gamma ) < (k+1) \frac{\sum_{i=1}^n c_i}{d+1} .$$
Hence $(p_1 , \ldots , p_n )$ is stable for the linearization $(0, \vec{c})$, by \cite[Example 3.3.24]{DH}.
\end{proof}

We now tackle the more general case.

\begin{proposition}
\label{StablePoints}
Let $(\gamma,\vec{c})\in\Delta$ satisfy $\gamma < 1$ and $0 < c_i < 1$, $i=1,\ldots,n$.  Then every smooth rational curve with distinct marked points is stable, hence $U_{d,n} \gquot_{\gamma, \vec{c}}\SL(d+1)$ compactifies $M_{0,n}$.
\end{proposition}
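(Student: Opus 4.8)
The plan is to reduce the general case to Lemma~\ref{EasyCase} by a combination of two moves: first degenerating to a configuration where the weight hypothesis $c_i < 1-\gamma$ fails only by a controlled amount, and second using the semicontinuity of the semistable locus together with the fact that $M_{0,n}$ is irreducible. Concretely, since all rational normal curves in $\PP^d$ are projectively equivalent, it suffices to exhibit \emph{one} stable pointed rational normal curve $(X, p_1, \ldots, p_n)$ for the linearization $(\gamma, \vec c)$, and then stability of an arbitrary configuration of distinct points follows because $M_{0,n}$ is irreducible and the stable locus is open and nonempty in the fiber over $[X]$, hence contains all configurations in a dense open subset; but by $\PGL$-homogeneity of the rational normal curve and the $S_n$-symmetry we can in fact conclude stability for every configuration of distinct points once we have it for one. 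So the crux is to produce a single stable configuration.

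To produce it, I would first handle the points whose weight is at least $1-\gamma$: since $\gamma < 1$, each $c_i < 1$, and $\sum c_i = d+1-(d-1)\gamma$, only finitely many indices $i$ have $c_i \geq 1-\gamma$, and for those I want to place the marked points ``generically'' on $X$ so that no low-dimensional linear subspace picks up too much weight. The key numerical input is that $n$ distinct points on a rational normal curve are in general linear position (any $k+1$ of them span a $\PP^k$), so for a $k$-plane $V$ we have $\#\{i : p_i \in V\} \leq k+1$. Thus $\sum_{p_i \in V} c_i \leq (k+1)\max_i c_i < (k+1)\cdot 1 = k+1 \leq (k+1)(1+\gamma)$ — but this only bounds things by $(k+1)(1+\gamma)$, not by the strict inequality $(k+1)\tfrac{\sum c_i}{d+1}$ needed for point-stability in the linearization $(0,\vec c)$. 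The honest fix is to verify the Hilbert–Mumford inequality of Proposition~\ref{NumericalCriterion} directly for the combined linearization: for any 1-PS $\lambda$ with weights $r_0 \geq \cdots \geq r_d = 0$, one computes $e_\lambda(X)$ for a rational normal curve (it achieves the minimal possible value, reflecting linear semistability, so $\gamma e_\lambda(X) \leq \gamma \cdot \tfrac{2d}{d+1}\sum r_i$ with equality governed by the flag 1-PS), and one bounds $\sum c_i e_\lambda(p_i)$ using general linear position, which controls $e_\lambda(p_i)$ in terms of how the flag $V(x_{k+1}, \ldots, x_d)$ meets the points. Summing and using $(d-1)\gamma + \sum c_i = d+1$ gives exactly the required $\leq (1+\gamma)\sum r_i$, with strictness coming from the fact that the worst 1-PS for $X$ (where $\gamma e_\lambda(X)$ is extremal) is never simultaneously worst for the points when the $c_i < 1$ and the points are generically placed.

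I expect the main obstacle to be the boundary-of-strictness bookkeeping: linear semistability of the rational normal curve gives only a non-strict inequality for the $X$-term, so stability of the pointed curve must extract strictness entirely from the marked-point term, and this requires knowing precisely for which 1-PS the inequality $\gamma e_\lambda(X) \leq \gamma\tfrac{2d}{d+1}\sum r_i$ is an equality — these are the flag-type 1-PS $\lambda_V$ of Proposition~\ref{Prop:Flag1PS} — and then checking that for each such $\lambda_V$ the marked-point contribution is \emph{strictly} below its allowance, which is where $c_i < 1$ (rather than merely $c_i < 1-\gamma$) and general linear position must be combined carefully: $\sum_{p_i \in V}c_i < \dim V + 1$ because at most $\dim V + 1$ points lie on $V$ and each has weight strictly less than $1$, unless all of $\dim V + 1$ of them do and have weight near $1$, in which case one uses that a generic choice avoids this coincidence. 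The remaining steps — invoking general VGIT facts to pass from separate (semi)stability in $(0,\vec c)$ and $(\gamma,\vec 0)$ to stability in $(\gamma,\vec c)$, exactly as in the proof of Lemma~\ref{EasyCase} — are then routine.
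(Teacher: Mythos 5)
There are two genuine gaps here. First, your opening reduction --- that it suffices to exhibit \emph{one} stable configuration of distinct points --- does not work. The subgroup of $\PGL(d+1)$ preserving a fixed rational normal curve $X$ is only a $\PGL(2)$ acting through the Veronese; it is $3$-transitive but not $n$-transitive on $X$, and the weights $c_i$ are not assumed symmetric, so neither projective equivalence nor $S_n$-symmetry transports stability from one configuration of distinct points to another. Openness of the stable locus then only gives you a dense open subset of configurations, which is not enough: for the quotient to contain all of $M_{0,n}$ (as the statement requires, and as the paper emphasizes at the start of \S\ref{section:StableExists}), \emph{every} configuration of distinct points on $X$ must be stable.

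Second, the heart of the matter --- verifying the inequality of Proposition \ref{NumericalCriterion} for an \emph{arbitrary} $1$-PS --- is asserted rather than proved. Your computation is essentially correct for the flag $1$-PS of Proposition \ref{Prop:Flag1PS}, where general linear position gives $\deg(X\cap V)\le k+1$ and $\sum_{p_i\in V}c_i<k+1$, but stability requires checking all $1$-PS, and flag $1$-PS do not suffice for that. Moreover, your claim that equality in the curve's Chow-semistability bound is ``governed by the flag $1$-PS'' is false: a flag $1$-PS $\lambda_V$ gives $e_{\lambda_V}(X)\le k+1<\tfrac{2d}{d+1}(k+1)$, i.e.\ strict inequality, whereas the $1$-PS achieving equality are those in the $\PGL(2)$-stabilizer of $X$ (with weights $2d,2d-2,\ldots,0$), which are not of flag type. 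So the ``boundary-of-strictness bookkeeping'' you flag as the main obstacle is exactly where the argument is missing. The paper's proof gets around this by induction on $d$: after Lemma \ref{ReductionStep} places the heaviest point $p_1$ (with $c_1>1-\gamma$) at $(1,0,\ldots,0)$, one projects from $p_1$ to $\PP^{d-1}$, proves the key estimate $e_\lambda(X)\le e_{\lambda^{(d)}}(\pi(X))+r_0+r_a$ by an explicit ideal-theoretic computation, and applies the inductive hypothesis to the projected configuration with the weight of $p_1$ reduced to $c_1-(1-\gamma)$. Some mechanism of this kind for handling general $1$-PS is needed, and your proposal does not supply one.
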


\begin{proof}
If $c_i < 1 - \gamma $ for all $i$, then the result holds by Lemma \ref{EasyCase} above.  We prove the remaining cases by induction on $d$, the case $d=2$ having been done in \cite{GS}.  Let $(X, p_1 , \ldots , p_n )$ be smooth with distinct points, and assume without loss of generality that $c_1 \geq c_i$ for all $i$ and that $c_1 > 1- \gamma$.  Let $\lambda : \mathbb{C}^* \to \SL(d+1)$ be a 1-PS acting with normalized weights $r_0 \geq r_1 \geq \cdots \geq r_d = 0$ in the sense of \S 2.2, and write $x_i$ for homogeneous coordinates on $\PP^d$ on which $\lambda$ acts diagonally.  We show in Lemma \ref{ReductionStep} below that it is sufficient to consider the situation $p_1 = (1,0,0, \ldots , 0)$, so let us consider this case now.

Let $f_i$ be the restriction of $x_i$ to $X$, which is a homogeneous polynomial of degree $d$ on $X \cong \PP^1$. Write $\pi (X) \subset \PP^{d-1}$ for the image of $X$ under linear projection from $p_1$ and $\lambda^{(d)} : \mathbb{C}^* \to \SL(d)$ for the 1-PS with weights $r_i$, $i>0$, diagonalized with respect to the homogeneous coordinates $x_i$, $i>0$.  By changing homogeneous coordinates $[x,y]$ on $\PP^1$, we assume that $p_1$ is the image of the point $[0:1] \in \PP^1$ under the map $\PP^1 \to \PP^d$ given by the $f_i$'s.  Notice that
$$ e_{\lambda^{(d)}} ( \pi ( p_i )) = \min \{ r_j \vert j >0, f_j ( p_i ) \neq 0 \} \geq \min \{ r_j \vert f_j ( p_i ) \neq 0 \} = e_{\lambda} ( p_i ) $$
$$ e_{\lambda^{(d)}} ( \pi ( p_1 )) = r_a := \min \{ r_j \vert j >0, \frac{f_j}{x} ( p_1 ) \neq 0 \} \leq r_0 .$$

We now show that
$$ e_{\lambda} (X) \leq e_{\lambda^{(d)}} ( \pi (X)) + r_0 + r_a .$$
To see this, note that the polynomials $g_i := \frac{f_i}{x}$ for $i>0$ form a basis for homogeneous polynomials of degree $d-1$.  Let $J$ denote the ideal in $\mathbb{C}[x,y]$ generated by the $f_i$'s for all $i>0$ and $J'$ the ideal in $\mathbb{C}[x,y,t]$ generated by the $t^{r_i} f_i$'s for all $i>0$.  Then $J^m$ consists of all polynomials that vanish at $[0,1]$ to order at least $m$, so dim$\mathbb{C}[x,y]_{md} / J^m = m$.  Since the polynomials $f_0^k f_a^{m-k}, 1 \le k \le m$ each have different order of vanishing at $[0,1]$, they are linearly independent and hence form a basis for this vector space.  Thus, if $I$ is the ideal generated by $t^{r_i} f_i$, we see that the vector space $[ \mathbb{C}[x,y,t]/I^m ]_{md}$, modulo those polynomials that vanish at $[0,1]$ to order at least $m$, is spanned by the linearly independent polynomials $t^j f_0^k f_a^{m-k}$ for $j < kr_0 + (m-k)r_a$.  In other words,
$$ \dim ( \mathbb{C}[x,y,t]/I^m )_{md} \leq \dim ( \mathbb{C} [x,y,t]/ (t^{r_0 k + r_a (m-k)} f_0^k f_a^{m-k}, J'^m ))_{md} $$
$$ \leq \sum_{k=1}^m r_0 k + r_a (m-k) + \dim ( \mathbb{C} [x,y,t]/ (t^{r_i} g_i )^m)_{m(d-1)} $$
$$ \leq {{m+1}\choose{2}} r_0 + {{m}\choose{2}} r_a  + \dim ( \mathbb{C} [x,y,t]/ (t^{r_i} g_i )^m)_{m(d-1)} .$$
Taking normalized leading coefficients, we obtain the formula above.

It follows that
$$ \gamma e_{\lambda} (X) + \sum_{i=1}^n c_i e_{\lambda} ( p_i ) \leq \gamma (e_{\lambda^{(d)}} ( \pi (X)) + r_0 + r_a ) + c_1 r_0 + \sum_{i=2}^n c_i e_{\lambda^{(d)}} ( \pi ( p_i )) .$$
By induction, however, we know that
$$ \gamma e_{\lambda^{(d)}} (\pi (X)) + (c_1 - (1- \gamma )) r_a + \sum_{i=2}^n c_i e_{\lambda^{(d)}} ( \pi ( p_i )) < (1 + \gamma ) \sum_{j=1}^d r_j .$$
It follows that the expression above is smaller than
$$ (1 + \gamma ) \sum_{j=1}^d r_j - (c_1 - (1 - \gamma ))r_a + \gamma r_0 + c_1 r_0 + \gamma r_a \leq (1+ \gamma ) \sum_{j=0}^d r_j$$
as desired.  The result then follows from Lemma \ref{ReductionStep} below.
\end{proof}

\begin{lemma}
\label{ReductionStep}
Let $X$ be a smooth rational normal curve, $p_1 , \ldots , p_n \in X$ distinct, $\lambda : \mathbb{C}^* \to \SL(d+1)$ a 1-PS, and $x_i$ coordinates on $\PP^d$ so that $\lambda$ is normalized as in \S 2.2.  Furthermore, assume that $c_1 \geq c_i$ for all $i$ and $c_1 > 1 - \gamma$.  Then there is a smooth rational normal curve $X'$ with $n$ distinct points $p_1 ' , \ldots , p_n '$ on $X'$ and 1-PS $\lambda '$ such that $p_1' = (1,0, \ldots , 0)$ and
$$ \gamma e_{\lambda} (X) + \sum c_i e_{\lambda} ( p_i ) \leq \gamma e_{\lambda '} (X') + \sum c_i e_{\lambda '} ( p_i ' ) .$$
\end{lemma}

\begin{proof}
Let $V_k \subset \PP^d$ be the $k$-dimensional linear space cut out by $x_{k+1} = x_{k+2} = \cdots = x_d = 0$.  We let $k$ be the smallest integer such that $X \cap V_k$ is non-empty, and write $\lambda '$ for the 1-PS acting with weights $(r_k , r_k , \ldots , r_k , r_{k+1} , \ldots , r_d )$.  Note that $\sum_{i=1}^n c_i e_{\lambda} ( p_i ) = \sum_{i=1}^n c_i e_{\lambda '} ( p_i )$.

We claim that $e_{\lambda} (X) = e_{\lambda '} (X)$ as well.  Indeed, let $W$ denote the linear series on $X \cong \PP^1$ generated by $x_k , \ldots , x_d$.  By assumption, $W$ is basepoint-free, so it contains a basepoint-free pencil.  Using the basepoint-free pencil trick, we see that the map
$$ W \otimes H^0 (X, \OO ((m-1)d)) \to H^0 (X, \OO (md)) $$
is surjective for all $m \geq 2$.  By induction on $m$, the map
$$ Sym^{m-1} W \otimes H^0 (X, \OO (d)) \to H^0 (X, \OO (md)) $$
is surjective as well.  It follows that dim$(R[t]/I^m )_m$ depends only linearly on $r_i$ for all $i<k$.  In other words, these $r_i$'s do not contribute to the normalized leading coefficient, so $e_{\lambda} (X) = e_{\lambda '} (X)$.  Moreover, since the first $k+1$ weights are same, by using an element $g$ of $\mathrm{PGL}(d+1)$ which preserves $x_{k+1}, \cdots, x_{d}$, we can take a smooth rational normal curve $X' := g\cdot X$ such that $e_{\lambda'}(X) = e_{\lambda'}(X')$ and $(1,0,0, \ldots , 0) \in X'$.

Next, relabel the points as follows:
\begin{displaymath}
p_i ' = \left\{ \begin{array}{ll}
(1,0,0, \ldots , 0) & \textrm{if $i=1$}\\
p_1 & \textrm{if $p_i = (1,0,0, \ldots , 0)$}\\
p_i & \textrm{otherwise}
\end{array} \right.
\end{displaymath}
Note that $\sum_{i=1}^n c_i e_{\lambda} ( p_i ) \leq \sum_{i=1}^n c_i e_{\lambda} ( p_i ')$.  In particular, if $p_i = (1,0,0, \ldots , 0)$ for some $i \neq 1$, then since $c_1 \geq c_i$ and $r_0 \geq r_j$ for all $j$, we have
$$ r_0 c_1 + r_k c_i = (r_0 - r_k ) c_1 + r_k c_i + r_k c_1 \geq r_0 c_i + r_k c_1 .$$
This concludes the proof.
\end{proof}

Note that if $c_i > 1$ for any $i$, then no element of $U_{d,n}$ is semistable by Corollary \ref{SmoothWeight}.  The only remaining case, therefore, is when $c_i = 1$ for some $i$.  In this case we will see that every semistable point is strictly semistable, and the resulting quotient is a compactification of $M_{0,n}$ if and only if $d$ is larger than the number of $i$'s for which equality holds.  We delay the proof of this until \S\ref{section:BoundaryQuots}.

\subsection{The space of effective linearizations}\label{section:EffLin}
Recall (cf. \S\ref{section:linearizations}) that we have been working with the cross-section $\Delta$ of the cone of linearizations defined by $(d-1) \gamma + \sum_{i=1}^n c_i = d+1$.  As we remarked earlier, the quotients we are interested in satisfy $\gamma <1$, since otherwise all stable curves are isomorphic to $\PP^1$.  Moreover, by
 Corollary \ref{SmoothWeight} we can assume that $c_i \le 1$ for all $i$.  In fact, by Proposition \ref{StablePoints} we know that if $c_i < 1$ for all $i$ then the linearization $(\gamma,\vec{c})$ is effective, i.e., the semistable locus is nonempty.  To avoid boundary issues such as non-ample linearizations, it is convenient to assume also that $c_i > 0$ for all $i$.  Therefore, we are led to the following space of effective linearizations:
 \[\Delta^\circ := \{ ( \gamma , c_1, \ldots, c_n ) \in \mathbb{Q}^{n+1} ~|~ 0 < \gamma < 1 , 0 < c_i < 1, (d-1) \gamma + \sum_{i=1}^n c_i = d+1 \}.\]  \emph{This is the space of linearizations of most interest to us}.  By Proposition \ref{StablePoints}, $U_{d.n}\gquot_L\SL(d+1)$ is a compactification of $M_{0,n}$ for any $L\in\Delta^\circ$.

\section{Degrees of components in stable curves}\label{section:degrees}

In this section we apply the stability results of the previous section to get a fairly explicit description of the pointed curves $(X,p_1,\ldots,p_n)$ corresponding to stable points of $U_{d,n}$.  Specifically, we show that for a generic linearization, GIT stability completely determines the degrees of subcurves of $X$.  This is then used to describe the walls in the GIT chamber decomposition of $\Delta^\circ$.

We begin by defining a numerical function that will be useful for describing the degrees of subcurves.  First, some notation: given a linearization $(\gamma,\vec{c})$ and a subset $I\subset [n]$, we set \[c_I := \sum_{i\in I} c_i \text{ and } c := \sum_{i=1}^n c_i.\]

\subsection{Weight functions}\label{section:WeightFcn}
Consider the function
$$ \varphi : 2^{[n]} \times \Delta^\circ \to \mathbb{Q} \hspace{1 in} \varphi (I, \gamma , \vec{c} ) = \frac{c_I - 1}{1 - \gamma }. $$
For a fixed linearization $(\gamma,\vec{c})\in\Delta^\circ$, we define
\begin{displaymath}
\sigma (I) = \left\{ \begin{array}{ll}
\lceil \varphi (I, \gamma , \vec{c} ) \rceil & \textrm{if $1 \leq c_I \leq c-1$}\\
0 & \textrm{if $c_I < 1$}\\
d & \textrm{if $c_I > c-1$}
\end{array} \right.
\end{displaymath}

Before relating this to the degrees of subcurves in GIT stable curves, let us make a few elementary observations:

\begin{lemma}\label{lem:sigmdef}
For any $I\subset [n]$, we have $\sigma(I)\in\{0,1,\ldots,d\}$.  If $\sigma(I)=d$, then $c_I > c-1$.
\end{lemma}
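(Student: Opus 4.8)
The plan is to check the two assertions directly from the definition of $\sigma(I)$, handling the three cases in the definition separately and reducing everything to elementary properties of the rational function $\varphi(I,\gamma,\vec c) = (c_I-1)/(1-\gamma)$ on $\Delta^\circ$. The key point to keep in mind throughout is that $\Delta^\circ$ is cut out by $0 < \gamma, c_i < 1$ together with the relation $(d-1)\gamma + \sum_{i=1}^n c_i = d+1$, so in particular $1-\gamma > 0$ and $c = \sum c_i = d+1-(d-1)\gamma$.

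First I would dispose of the two trivial cases: if $c_I < 1$ then $\sigma(I)=0 \in \{0,1,\dots,d\}$, and if $c_I > c-1$ then $\sigma(I)=d \in \{0,1,\dots,d\}$; in neither case is there anything further to prove for the first assertion. So the substance is the middle case $1 \le c_I \le c-1$, where $\sigma(I) = \lceil \varphi(I,\gamma,\vec c)\rceil$. Here I would establish the two bounds $0 \le \varphi(I,\gamma,\vec c) \le d$ and then observe that taking ceilings preserves these (since $0$ and $d$ are integers, $0 \le \varphi \le d$ forces $0 \le \lceil\varphi\rceil \le d$, using that $d \ge 1$). The lower bound is immediate: $c_I \ge 1$ and $1-\gamma>0$ give $\varphi \ge 0$. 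For the upper bound, $\varphi \le d$ is equivalent to $c_I - 1 \le d(1-\gamma)$, i.e. $c_I \le d+1 - d\gamma$; since $c_I \le c-1 = d - (d-1)\gamma = (d+1-d\gamma) - (1-\gamma)$ and $1-\gamma > 0$, this is strict, so in fact $\varphi < d$ in this case. That finishes the first assertion.

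For the second assertion, suppose $\sigma(I) = d$. If we landed in the branch $c_I > c-1$ of the definition, we are done immediately. The content is to rule out the other two branches: the branch $c_I < 1$ gives $\sigma(I)=0 \ne d$ (using $d\ge 1$), and — this is the one real computation — in the branch $1 \le c_I \le c-1$ we showed just above that $\varphi(I,\gamma,\vec c) < d$, hence $\lceil \varphi(I,\gamma,\vec c)\rceil \le d-1 < d$, contradicting $\sigma(I)=d$. Therefore the only branch consistent with $\sigma(I)=d$ is $c_I > c-1$, which is exactly the claim.

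I do not expect any genuine obstacle here; the lemma is bookkeeping. The only place that requires a moment's care is making sure the inequality $c_I \le c-1$ is used to get a \emph{strict} bound $\varphi < d$ (not merely $\varphi \le d$), since that strictness is precisely what forces $\lceil\varphi\rceil \le d-1$ and hence drives the second assertion. This strictness comes for free from $1-\gamma > 0$ on $\Delta^\circ$, so there is no issue.
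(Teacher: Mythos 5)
Your overall strategy is the same as the paper's: dispose of the two outer branches of the definition and then bound $\varphi(I,\gamma,\vec{c})$ in the middle branch $1 \le c_I \le c-1$ using the relation $c = d+1-(d-1)\gamma$. There is, however, one invalid inference in your handling of the second assertion. You establish only the bound $\varphi(I,\gamma,\vec{c}) < d$ and then conclude that $\lceil \varphi(I,\gamma,\vec{c})\rceil \le d-1$; this does not follow, since a real number $x$ with $d-1 < x < d$ satisfies $x < d$ yet $\lceil x\rceil = d$. What is actually needed, and what the paper proves, is the sharper bound $\varphi(I,\gamma,\vec{c}) \le d-1$: from $c_I - 1 \le c-2 = (d+1-(d-1)\gamma)-2 = (d-1)(1-\gamma)$ one gets $\varphi \le d-1$ exactly, and since $d-1$ is an integer this does force $\lceil\varphi\rceil \le d-1$. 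Your own displayed inequality $c_I \le (d+1-d\gamma)-(1-\gamma)$ is this bound in disguise (subtract $1$ and divide by $1-\gamma$), so the repair is one line; but as written, the step from ``$\varphi<d$'' to ``$\lceil\varphi\rceil\le d-1$'' is a genuine gap. The first assertion is unaffected, since there you only use $\varphi\le d$ with $d$ an integer, which is sound.
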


\begin{proof}
It is enough to show that $\varphi(I,\gamma,\vec{c}) \le d-1$ whenever $1 \le c_I \le c-1$.  But in this case we have \[\varphi(I,\gamma,\vec{c}) = \frac{c_I - 1}{1-\gamma} \le \frac{c-2}{1-\gamma} = \frac{(d+1-(d-1)\gamma) - 2}{1-\gamma} = d - 1,\] so this indeed holds.
\end{proof}

\begin{lemma}\label{lem:subadd}
For any collection of disjoint subsets $I_1,\ldots,I_m\subset [n]$, \[\sigma ( \bigcup_{j=1}^m I_j ) \geq \sum_{j=1}^m \sigma ( I_j ).\]
\end{lemma}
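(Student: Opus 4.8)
The plan is to reduce to the case $m=2$ by induction, and then perform a case analysis based on which of the three regimes in the definition of $\sigma$ the sets $I_1$, $I_2$, and $I := I_1 \cup I_2$ fall into. Since the $I_j$ are disjoint, $c_I = c_{I_1} + c_{I_2}$, which is the only arithmetic input we need. First I would dispose of the degenerate regimes: if either $c_{I_1} < 1$ or $c_{I_2} < 1$, the corresponding $\sigma$-value is $0$, so the claimed inequality $\sigma(I) \ge \sigma(I_1) + \sigma(I_2)$ reduces to $\sigma(I) \ge \sigma(\text{the other set})$, which follows from monotonicity of $\sigma$ in $c_I$ (itself immediate from monotonicity of $\lceil \varphi \rceil$ and the boundary conventions, using $c_I \ge c_{I_j}$). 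Similarly, if $c_{I_1} > c-1$ or $c_{I_2} > c-1$, then one term on the right already equals $d$; but then $c_I \ge c_{I_j} > c-1$ as well, so $\sigma(I) = d$, and since each $\sigma$-value is at most $d$ by Lemma \ref{lem:sigmdef} together with disjointness forcing the other term to be $0$ (as $c_{I_{3-j}} \le c - c_{I_j} < 1$), equality holds. Thus $\sigma(I) = d \ge d + 0$.

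The remaining case is $1 \le c_{I_1}, c_{I_2}$ and $c_{I_1}, c_{I_2} \le c-1$, so that $\sigma(I_j) = \lceil \varphi(I_j,\gamma,\vec c)\rceil$ for $j=1,2$. Here the essential estimate is the superadditivity of the ceiling function combined with $\varphi(I,\gamma,\vec c) = \varphi(I_1,\gamma,\vec c) + \varphi(I_2,\gamma,\vec c) + \frac{1}{1-\gamma}$: explicitly, since $c_I - 1 = (c_{I_1}-1) + (c_{I_2}-1) + 1$ and $1-\gamma > 0$, we get
\[
\varphi(I,\gamma,\vec c) = \varphi(I_1,\gamma,\vec c) + \varphi(I_2,\gamma,\vec c) + \tfrac{1}{1-\gamma} \ge \varphi(I_1,\gamma,\vec c) + \varphi(I_2,\gamma,\vec c).
\]
If $c_I \le c-1$, then $\sigma(I) = \lceil \varphi(I,\gamma,\vec c)\rceil \ge \lceil \varphi(I_1,\gamma,\vec c) + \varphi(I_2,\gamma,\vec c)\rceil \ge \lceil \varphi(I_1,\gamma,\vec c)\rceil + \lceil \varphi(I_2,\gamma,\vec c)\rceil - 1$; the $-1$ loss from superadditivity of $\lceil\cdot\rceil$ is exactly absorbed by the extra $\frac{1}{1-\gamma} > 0$ once one checks that adding a positive quantity before taking the ceiling recovers the lost unit --- more carefully, $\lceil a + b + \epsilon\rceil \ge \lceil a\rceil + \lceil b\rceil$ whenever $\epsilon \ge 1$ need not hold directly, so instead I would argue $\lceil a+b+\tfrac{1}{1-\gamma}\rceil \ge \lceil a+b+1\rceil \ge \lceil a\rceil + \lceil b\rceil$ using $\frac{1}{1-\gamma} \ge 1$ (valid since $0 < \gamma < 1$) for the first inequality and $\lceil x+1\rceil = \lceil x\rceil + 1 \ge (\lceil a\rceil - 1) + (\lceil b \rceil - 1) + 1 \ge \lceil a\rceil + \lceil b\rceil$ only when... — this needs the sharper fact $\lceil a \rceil + \lceil b \rceil \le \lceil a + b \rceil + 1$, hence $\lceil a+b\rceil + 1 \ge \lceil a\rceil + \lceil b\rceil$, giving $\lceil a+b+1\rceil = \lceil a+b\rceil + 1 \ge \lceil a\rceil + \lceil b\rceil$ as desired. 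If instead $c_I > c-1$, then $\sigma(I) = d$; combining $\sigma(I_1) + \sigma(I_2) \le \lceil\varphi(I_1,\gamma,\vec c)\rceil + \lceil\varphi(I_2,\gamma,\vec c)\rceil$ with Lemma \ref{lem:sigmdef}'s bound $\lceil\varphi(I_j,\gamma,\vec c)\rceil \le d-1$ and the estimate $\varphi(I_1,\gamma,\vec c)+\varphi(I_2,\gamma,\vec c) = \varphi(I,\gamma,\vec c) - \tfrac{1}{1-\gamma} \le (d-1) - \tfrac{1}{1-\gamma} < d - 1$ should force $\sigma(I_1) + \sigma(I_2) \le d$, completing the proof.

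The main obstacle I anticipate is the bookkeeping in that last sub-case: making sure the $+\tfrac{1}{1-\gamma}$ term genuinely compensates the $-1$ defect of ceiling-superadditivity, and handling the transition where $c_I$ crosses from the middle regime into $c_I > c-1$ (where $\sigma(I)$ jumps to $d$). I would handle the latter by noting that in that transitional range $\varphi(I,\gamma,\vec c)$ is already close to $d-1$, forcing $\lceil\varphi(I_1,\gamma,\vec c)\rceil + \lceil\varphi(I_2,\gamma,\vec c)\rceil \le d$ directly. Everything else is monotonicity of $\sigma$ and the disjointness identity $c_I = \sum c_{I_j}$, which are routine.
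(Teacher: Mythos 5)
Your argument follows essentially the same route as the paper's: discard the sets with $\sigma=0$, dispose of the regime $c_{I_j}>c-1$ via disjointness, and in the middle regime convert superadditivity of $\sigma$ into ceiling arithmetic powered by $\frac{1}{1-\gamma}\ge 1$. The paper treats all $m$ at once via $\sum\lceil\frac{c_{I_j}-1}{1-\gamma}\rceil<\sum(\frac{c_{I_j}-1}{1-\gamma}+1)$ and absorbs the surplus using $\frac{m-1}{1-\gamma}\ge m-1$; your reduction to $m=2$ by induction together with $\lceil a\rceil+\lceil b\rceil\le\lceil a+b\rceil+1$ is the same computation in different packaging, and your main case is correct.

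One step is wrong as written, though the conclusion it serves is salvageable. In the sub-case $1\le c_{I_1},c_{I_2}\le c-1$ but $c_I>c-1$, you invoke Lemma \ref{lem:sigmdef} to assert $\varphi(I,\gamma,\vec{c})\le d-1$. That bound is established there only under the hypothesis $c_I\le c-1$, and it genuinely fails in your sub-case: $\varphi(I,\gamma,\vec{c})=\frac{c_I-1}{1-\gamma}>\frac{c-2}{1-\gamma}=d-1$. The correct input is simply $c_I\le c$, which gives
\[
\varphi(I_1,\gamma,\vec{c})+\varphi(I_2,\gamma,\vec{c})\;=\;\varphi(I,\gamma,\vec{c})-\tfrac{1}{1-\gamma}\;\le\;\tfrac{c-1}{1-\gamma}-\tfrac{1}{1-\gamma}\;=\;\tfrac{c-2}{1-\gamma}\;=\;d-1,
\]
whence $\sigma(I_1)+\sigma(I_2)\le\lceil\varphi(I_1,\gamma,\vec{c})+\varphi(I_2,\gamma,\vec{c})\rceil+1\le d=\sigma(I)$, as needed. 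With that repair your treatment of this endpoint is actually more careful than the paper's, whose proof ends by declaring $\lceil\frac{c_{I_1\cup\cdots\cup I_m}-1}{1-\gamma}\rceil$ to be $\sigma(\bigcup I_j)$ ``by definition,'' quietly eliding the case $c_{\bigcup I_j}>c-1$ where $\sigma$ is defined to be $d$ rather than that ceiling.
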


\begin{proof}
The statement is trivial for $m = 1$, so assume $m \ge 2$.  Note that if $\sigma ( I_j ) = 0$ for any $j$, then it does not contribute to the sum, so we may ignore it.  If there is a $j$ with $c_{I_j} > c-1$, then by the disjointness hypothesis we have  $c_{I_k} < 1$, and hence $\sigma(I_k) = 0$, for all $k\ne j$.  Therefore, we are reduced to the case that $\sigma(I_j) = \lceil \varphi(I_j,\gamma,\vec{c})\rceil$ for every $j$.  In this case, since $\frac{1}{1 - \gamma} \geq 1$, we have
\begin{eqnarray*}
	\sum_{j=1}^m \sigma ( I_j )
	&=& \sum_{j=1}^m \lceil \frac{ c_{I_j} - 1}{1 - \gamma} \rceil
	< \sum_{j=1}^{m}\left( \frac{c_{I_{j}}-1}{1-\gamma} + 1\right)\\
	&=& \frac{\sum_{j=1}^{m}c_{I_{j}}-1}{1-\gamma} -
	\frac{m-1}{1-\gamma} +1
	\le \frac{\sum_{j=1}^{m}c_{I_{j}}-1}{1-\gamma}\\
	&\le& \lceil \frac{\sum_{j=1}^{m}c_{I_{j}}-1}{1-\gamma} \rceil
	= \lceil \frac{c_{I_{1} \cup \cdots \cup I_{m}}-1}{1-\gamma} \rceil,
\end{eqnarray*}
which by definition is $\sigma ( \bigcup_{j=1}^m I_j )$.
\end{proof}

Perhaps most significantly, $\sigma$ satisfies a convenient additivity property for most linearizations:

\begin{lemma}
\label{Additivity}
If $\varphi (I , \gamma , \vec{c} ) \notin \mathbb{Z}$ for each nonempty $I\subset [n]$, then \[\sigma (I) + \sigma ( I^c ) = d\] for each $I$.
\end{lemma}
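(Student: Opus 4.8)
The plan is to reduce the whole statement to the single identity
\[\varphi(I,\gamma,\vec{c})+\varphi(I^c,\gamma,\vec{c})=d-1\]
and then chase through the cases in the definition of $\sigma$. To obtain the identity, note that $c_I+c_{I^c}=c=\sum_{i=1}^n c_i$ and, since the linearization lies in the cross-section $\Delta^\circ$, that $(d-1)\gamma+c=d+1$; hence
\[\varphi(I,\gamma,\vec{c})+\varphi(I^c,\gamma,\vec{c})=\frac{(c_I-1)+(c_{I^c}-1)}{1-\gamma}=\frac{c-2}{1-\gamma}=\frac{(d-1)(1-\gamma)}{1-\gamma}=d-1.\]
In particular $c_I=1$ forces $\varphi(I,\gamma,\vec{c})=0$ and $c_I=c-1$ forces $\varphi(I,\gamma,\vec{c})=d-1$, both integers, so the hypothesis rules out $c_I\in\{1,c-1\}$ whenever $I$ and $I^c$ are both nonempty. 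I would also record that $c=d+1-(d-1)\gamma\ge2$ because $d\ge1$ and $\gamma<1$, which makes the three ranges $c_I<1$, $1\le c_I\le c-1$, $c_I>c-1$ in the definition of $\sigma$ genuinely disjoint.

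Next I would handle $I\in\{\emptyset,[n]\}$ trivially: one of $c_I,c_{I^c}$ is $0<1$ and the other is $c>c-1$, so $\{\sigma(I),\sigma(I^c)\}=\{0,d\}$ directly from the definition. Then, assuming $I$ and $I^c$ are both nonempty, I would split on the value of $c_I$. If $c_I<1$, then $c_{I^c}=c-c_I>c-1$, so $\sigma(I)=0$ and $\sigma(I^c)=d$ by definition; symmetrically if $c_I>c-1$. Otherwise, by the first paragraph $1<c_I<c-1$ strictly, hence also $1<c_{I^c}<c-1$, and the definition gives $\sigma(I)=\lceil\varphi(I,\gamma,\vec{c})\rceil$ and $\sigma(I^c)=\lceil\varphi(I^c,\gamma,\vec{c})\rceil$.

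In this last case I would invoke the elementary fact that for $a,b\in\mathbb{Q}\setminus\mathbb{Z}$ with $a+b\in\mathbb{Z}$ one has $\lceil a\rceil+\lceil b\rceil=a+b+1$ (if $a=m+f$ with $m\in\mathbb{Z}$ and $0<f<1$, then $b=(a+b)-m-f$ has fractional part $1-f\in(0,1)$, so $\lceil a\rceil=m+1$ and $\lceil b\rceil=(a+b)-m$). Applying it with $a=\varphi(I,\gamma,\vec{c})$ and $b=\varphi(I^c,\gamma,\vec{c})$, which are non-integers by hypothesis and sum to the integer $d-1$, yields $\sigma(I)+\sigma(I^c)=(d-1)+1=d$. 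I do not anticipate a genuine obstacle; the only point requiring care is checking that the non-integrality hypothesis really excludes the boundary values $c_I\in\{1,c-1\}$ — this is essential, since e.g.\ $c_I=1$ would give $\sigma(I)+\sigma(I^c)=0+(d-1)=d-1$ — and keeping the $\sigma$-cases aligned with the three ranges in its definition.
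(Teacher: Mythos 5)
Your proposal is correct and follows essentially the same route as the paper: dispose of the cases $c_I<1$ and $c_I>c-1$ directly from the definition, and in the remaining case use the cross-section relation $(d-1)\gamma+c=d+1$ to get $\varphi(I)+\varphi(I^c)=d-1$ together with the ceiling identity for non-integers (the paper phrases it as $\lceil d-1-\varphi(I)\rceil=d-\lceil\varphi(I)\rceil$, which is the same fact). Your extra remarks about excluding $c_I\in\{1,c-1\}$ and the disjointness of the three ranges are correct and only make explicit what the paper leaves implicit.
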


\begin{proof}
If $c_I < 1$ then $c_{I^c} = c - c_I > c -1$, so $\sigma(I)+\sigma(I^c) = 0+d = d$.  The case $c_I > c -1$ is analogous, so without loss of generality assume that $c_I$ and $c_{I^c}$ are between 1 and $c-1$.  Then
\begin{eqnarray*}
\sigma ( I^c ) = \lceil \frac{c_{I^c} - 1}{1 - \gamma} \rceil
&=& \lceil \frac{(d+1)-(d-1)\gamma - c_{I}-1}{1-\gamma}\rceil\\
&=&
\lceil d-1- \frac{c_I -1}{1 - \gamma} \rceil = d - \sigma (I),
\end{eqnarray*}
where the last equality uses the non-integrality assumption.
\end{proof}

\subsection{Degrees of tails}
As we show below, the function $\sigma$ computes the degree of a certain type of subcurve.  For notational convenience, given a marked curve $(X,p_1,\ldots,p_n)$ and a subcurve $Y \subset X$, let us set
$$ \varphi (Y, \gamma , \vec{c} ) = \varphi ( \{ i~|~p_i \in Y \} , \gamma , \vec{c} )$$ and similarly for $\sigma(Y)$.

\begin{definition}
Let $X \in \Chow(1,d,\PP^d)$.  A subcurve $Y \subset X$ is called a \textbf{tail} if it is connected and $\vert Y \cap \overline{X \backslash Y} \vert = 1$.
\end{definition}

We do not require tails to be irreducible.  Moreover, the ``attaching point'' of a tail need not be a node.

\begin{proposition}
\label{DegreeOfTails}
For a fixed $(\gamma,\vec{c})\in\Delta^\circ$, suppose that $\varphi (I , \gamma , \vec{c} ) \notin \mathbb{Z}$ for any nonempty $I \subset [n]$.  If $X$ is a GIT-semistable curve and $E \subset X$ a tail, then $\deg (E) = \sigma (E)$.
\end{proposition}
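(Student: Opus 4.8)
The plan is to prove the equality $\deg(E) = \sigma(E)$ by establishing two inequalities, each via an application of the flag one-parameter subgroups of Proposition \ref{Prop:Flag1PS}, using the fact that $E$ is a tail so that $X$ splits cleanly along the single attaching point. Write $X = E \cup F$ where $F = \overline{X \setminus E}$, let $e = \deg E$ and $f = \deg F$, so $e + f = d$, and let $q$ be the attaching point $E \cap F$. Since $E$ is connected and $X$ is semistable, by Corollary \ref{cor-geomofstablecurves}(3) the subcurve $E$ spans a $\PP^e$; similarly $F$ spans a $\PP^f$, and these two linear spans meet exactly at $q$ (a dimension count: the span of $X$ is all of $\PP^d$ and $e+f=d$, so the spans are complementary away from their common point $q$). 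After a change of coordinates we may assume $\mathrm{span}(E) = V(x_{e+1},\ldots,x_d) = V_e$ in the notation of Proposition \ref{Prop:Flag1PS}, with $q = (1,0,\ldots,0,0,\ldots,0)$ the unique point of $V_e$ lying on $F$.

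For the upper bound $\deg(E) \le \sigma(E)$: suppose $e \ge 1$ (if $e = 0$ then $E$ is a point and $\sigma(E) \ge 0 = \deg E$ trivially, though one should check $c_E < 1$ holds, which follows from Corollary \ref{SmoothWeight}). Apply Proposition \ref{Prop:Flag1PS} with $k = e-1$, i.e. with the linear subspace $V_{e-1} = V(x_e, x_{e+1}, \ldots, x_d)$ chosen generically inside $\mathrm{span}(E)$ so that it meets $E$ in $\deg(E) = e$ points (a generic hyperplane section of the rational normal curve $E$) and contains none of the marked points except those forced to lie in every such subspace — but a generic $V_{e-1}$ contains no marked points at all. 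Then $X(V_{e-1}) = \emptyset$, $\deg(Y \cap V_{e-1})$ picks up the $e$ points of $E \cap V_{e-1}$ (and nothing from $F$, since $\mathrm{span}(F) \cap V_{e-1} \subseteq \mathrm{span}(F)\cap\mathrm{span}(E) = \{q\} \not\subseteq V_{e-1}$ for generic choice), and $\sum_{p_i \in V_{e-1}} c_i = 0$. Wait — this does not involve $c_E$ at all. The correct choice is instead to take $V$ to be $\mathrm{span}(E)$ itself, i.e. $k = e$: then semistability gives $\gamma(2e + \deg(F\cap \mathrm{span} E)) + \sum_{p_i \in \mathrm{span}(E)} c_i \le (e+1)(1+\gamma)$. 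Now $\deg(F \cap \mathrm{span}(E)) \ge 1$ since $q \in F \cap \mathrm{span}(E)$, and every marked point on $E$ lies in $\mathrm{span}(E)$, so $\sum_{p_i \in \mathrm{span}(E)} c_i \ge c_E$. Hence $\gamma(2e+1) + c_E \le (e+1)(1+\gamma)$, which rearranges to $c_E - 1 \le e(1-\gamma)$, i.e. $\varphi(E,\gamma,\vec c) \le e$; since $e$ is an integer and $\varphi(E)\notin\Z$, we get $\sigma(E) = \lceil\varphi(E)\rceil \le e = \deg E$ (handling the boundary cases $c_E < 1$ and $c_E > c-1$ separately, where $\sigma(E)$ is $0$ or $d$ respectively and the inequality is immediate or follows from Lemma \ref{Additivity} applied to the complementary tail).

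For the lower bound $\deg(E) \ge \sigma(E)$: by Lemma \ref{Additivity}, $\sigma(E) + \sigma(F) = d = \deg E + \deg F$, so it suffices to prove the upper bound $\deg(F) \le \sigma(F)$ for the complementary subcurve $F$ — but $F$ is itself a tail (it is connected, being the complement of a tail in a semistable, hence nodal-type, connected curve — here one uses that $X$ is connected and $E$ attaches at a single point), so this is exactly the upper bound already established, applied to $F$ in place of $E$. Thus $\deg E = d - \deg F \ge d - \sigma(F) = \sigma(E)$, and combined with the upper bound we conclude $\deg(E) = \sigma(E)$.

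The main obstacle I anticipate is the bookkeeping around the degenerate and boundary cases: verifying that $F$ is genuinely a tail (connectedness of the complement, which needs the multinodal structure from Corollary \ref{cor-geomofstablecurves}), and correctly matching the three-way definition of $\sigma$ (the cases $c_I < 1$, $1 \le c_I \le c-1$, $c_I > c-1$) against the inequalities — in particular ensuring that when $c_E > c - 1$ (so $\sigma(E) = d$) the tail $E$ really does have degree $d$, which should follow because then $c_F < 1$, so $\sigma(F) = 0$, forcing $\deg F = 0$ via the upper bound for $F$. The geometric input that $\mathrm{span}(E)$ meets $F$ in a positive-degree scheme containing the node is straightforward but should be stated carefully, as should the reduction allowing us to apply Proposition \ref{Prop:Flag1PS} with $V = \mathrm{span}(E)$ after a coordinate change.
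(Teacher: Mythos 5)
Your argument is essentially the paper's own proof: apply the flag one-parameter subgroup of Proposition \ref{Prop:Flag1PS} with $V=\mathrm{span}(E)$ to get $\varphi(E,\gamma,\vec{c})\le\deg E$ and hence $\sigma(E)\le\deg E$, then apply the same bound to the complementary tail $\overline{X\setminus E}$ and combine with Lemma \ref{Additivity} and $\deg E+\deg\overline{X\setminus E}=d$. The only flaw is presentational: your labels ``upper bound'' and ``lower bound'' are swapped relative to what you actually derive (the 1-PS computation yields $\sigma(E)\le\deg E$, not $\deg E\le\sigma(E)$), but the two displayed inequalities together with the additivity identity do force both to be equalities, so the proof is correct as written in its formulas.
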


\begin{proof}
Write $r:=\deg (E)$.  The dimension of the linear span of $E$ is $r$ by Corollary \ref{cor-geomofstablecurves}, so we may assume that $E\subset V := V(x_{r+1}, \ldots,x_d) \subset \PP^d$.  Now
$$ \gamma (2 \deg X(V) + e_{\lambda} (Y)) + \sum_{p_i \in V} c_i \geq \gamma (2r + 1) + \sum_{p_i \in E} c_i,$$
so by Proposition \ref{Prop:Flag1PS} we have
$$ \sum_{p_i \in E} c_i \le (r+1)(1 + \gamma ) - \gamma (2r+1) = r+1 - \gamma r ,$$
or equivalently,
$$ r \ge \frac{( \sum_{p_i \in E} c_i ) -1}{1 - \gamma}. $$
Since $r$ is a positive integer, it follows that $r \geq \sigma (E)$.  Note that if $\sigma (E) = d$, then $r > \frac{c-2}{1- \gamma} = d-1$, so the result still holds in this case.

Now, if $E$ is a tail then so is $\overline{X \backslash E}$, hence
$$ \text{deg} ( \overline{X \backslash E} ) \geq \sigma ( \overline{X \backslash E} ) \geq \sigma ( \{ i \vert p_i \notin E \} ) .$$
Thus, by Lemma \ref{Additivity}, $\text{deg} ( \overline{X \backslash E} ) \geq d - \sigma (E)$.  But we know that $r + \text{deg} ( \overline{X \backslash E} ) = d$, so the inequality $r \le \sigma(E)$ also holds.
\end{proof}

\subsection{Arbitrary subcurves}

Removing an irreducible component from a semistable curve in $\Chow(1,d,\PP^d )$ yields a finite collection of tails.  This holds more generally for any connected subcurve.  We can combine this fact with the above result on tails to deduce the following:

\begin{corollary}
\label{DegreeOfComponents}
Suppose that $\varphi (I , \gamma , \vec{c} ) \notin \mathbb{Z}$ for any $\emptyset \ne I \subset [n]$, and let $E\subseteq X$ be a connected subcurve of $(X,p_1,\ldots,p_n) \in U_{d,n}^{ss}$.  Then
$$ \deg (E) = d - \sum \sigma (Y) $$
where the sum is over all connected components $Y$ of $\overline{X \backslash E}$.
\end{corollary}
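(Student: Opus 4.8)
The plan is to reduce the statement about an arbitrary connected subcurve $E$ to the already-established case of tails (Proposition \ref{DegreeOfTails}) by a decomposition argument. The key observation is that if $E\subseteq X$ is connected, then $\overline{X\backslash E}$ is a disjoint union of connected subcurves $Y_1,\ldots,Y_s$, and each $Y_j$ is a tail of $X$: indeed, since $X$ is a nodal (multinodal) rational curve by Corollary \ref{cor-geomofstablecurves}, its dual graph is a tree, so removing the connected subcurve $E$ disconnects the tree into pieces each of which meets $E$—and hence meets $\overline{X\backslash Y_j}=E\cup\bigcup_{k\ne j}Y_k$—in exactly one point. (Here one uses that a connected subcurve of a curve of compact type whose dual graph is a tree has connected complement-components, each attached along a single node; the genus-zero hypothesis is what makes this work.) So each $Y_j$ is a tail, and Proposition \ref{DegreeOfTails} applies to give $\deg(Y_j)=\sigma(Y_j)$.

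First I would make precise the claim that the connected components of $\overline{X\backslash E}$ are tails, citing Corollary \ref{cor-geomofstablecurves}(2) for the tree structure of the dual graph and giving the one-line combinatorial argument above. Second, I would invoke additivity of degree under the decomposition $X = E \cup Y_1\cup\cdots\cup Y_s$: since any two of these subcurves meet in at most finitely many (reduced) points and the total arithmetic genus is zero, $\deg(X)=\deg(E)+\sum_j\deg(Y_j)$, i.e. $d=\deg(E)+\sum_j\deg(Y_j)$. Third, I would substitute $\deg(Y_j)=\sigma(Y_j)$ from Proposition \ref{DegreeOfTails}, obtaining $\deg(E)=d-\sum_j\sigma(Y_j)$, which is exactly the asserted formula once one recalls the convention $\sigma(Y)=\sigma(\{i\mid p_i\in Y\})$.

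I expect the main obstacle to be the bookkeeping in the first step: verifying carefully that \emph{every} connected component $Y_j$ of $\overline{X\backslash E}$ is genuinely a tail in the sense of the definition, i.e. that $\vert Y_j\cap\overline{X\backslash Y_j}\vert=1$ rather than merely finite. This requires knowing that $E$ is connected (given) and that the dual graph of $X$ is a tree (from Corollary \ref{cor-geomofstablecurves}), and then a short graph-theoretic argument: in a tree, deleting the vertex-set corresponding to $E$ leaves components each joined to $E$ by a single edge, which translates to a single intersection point since the singularities are nodal. A minor secondary point is ensuring the non-integrality hypothesis $\varphi(I,\gamma,\vec{c})\notin\mathbb{Z}$ is passed along so that Proposition \ref{DegreeOfTails} genuinely applies to each $Y_j$; this is immediate since the hypothesis is on all nonempty $I\subset[n]$. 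Everything else is formal, so the proof should be quite short.
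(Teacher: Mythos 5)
Your proposal is correct and follows essentially the same route as the paper: the paper's proof likewise observes that each connected component of $\overline{X\backslash E}$ is a tail, applies Proposition \ref{DegreeOfTails} to get $\deg(Y)=\sigma(Y)$, and concludes by additivity of degree. Your additional care in justifying the tail claim via the tree structure of the dual graph is a welcome elaboration of a step the paper leaves implicit.
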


\begin{proof}
If $Y$ is a connected component of $\overline{X \backslash E}$, then it is a tail.  It follows from Proposition \ref{DegreeOfTails} that $\deg (Y) = \sigma (Y)$.  Since the total degree of $X$ is $d$, we see that $ \deg (E) = d - \sum \sigma (Y) $.
\end{proof}

We now have enough information to completely describe stability of pointed curves in $U_{d,n}$, though we postpone the proof of the following result until \S\ref{section:Morphism}.

\begin{proposition}\label{prop:stabilitydescription}
Let $L = (\gamma, c_{1}, \cdots, c_{n}) \in \Delta$ be such that $U_{d,n}^{ss}(L) = U_{d,n}^{s}(L)$. A pointed curve $(X, p_{1}, \cdots, p_{n}) \in U_{d,n}$ is stable with respect to $L$ if and only if $X \subset \PP^{d}$ is non-degenerate, for any point $p \in X$ with multiplicity $m$, $\sum_{p_{i}=p}c_{i}<1-(m-1)\gamma$, and for any tail $Y \subset X$, $\deg (Y) = \sigma(Y)$.
\end{proposition}

\subsection{GIT Walls}\label{section:walls}

These results are sufficient to determine the wall-and-chamber decomposition of $\Delta^\circ$.  Specifically, for any integer $k$ with $0 \leq k \leq d-1$, the set $\varphi (I, \cdot )^{-1} (k)$ defines a hyperplane in $\Delta^\circ$.  Note that, by additivity,
$$\varphi (I, \cdot )^{-1} (k) = \varphi (I^c , \cdot )^{-1} (d-1-k),$$
but otherwise these hyperplanes are distinct.

\begin{lemma}\label{lem:existencemarkedpoints}
If $( \gamma , \vec{c} )$ is not contained in any hyperplane of the form $\varphi (I, \cdot )^{-1} (k)$, then:
\begin{enumerate}
	\item An irreducible tail $E$ has at least two distinct marked points on its
	smooth locus $E^{sm}$.
	\item An irreducible component $E$ with $|E \cap \overline{X\backslash
	E}| = 2$ has at least one marked point on $E^{sm}$.
\end{enumerate}
\end{lemma}

\begin{proof}
Let $E \subset X$ be an irreducible tail.  Since $E$ has positive degree, by Proposition \ref{DegreeOfTails} we have $\sigma(E) \ge 1$, so by additivity $\sigma(\overline{X \backslash E}) \le d-1$, and hence by definition we see that $\sum_{p_i\in\overline{X \backslash E}} c_i \le c -1$.  By the non-integrality assumption this inequality must be strict, and consequently $\sum_{p_i \in E^{sm}} c_i > 1.$  On the other hand, by Corollary \ref{SmoothWeight}, the sum of the weights at a smooth point of $E$ cannot exceed 1.  It follows that the marked points on $E$ must be supported at 2 or more points of $E$ other than the singular point.

Similarly, let $E \subset X$ be a bridge---a component such that $\vert E \cap \overline{X \backslash E} \vert = 2$.  Let $Y_1 , Y_2$ denote the connected components of $\overline{X \backslash E}$.  If the smooth part of $E$ contains no marked points, then by Lemma \ref{Additivity} we see that $\sigma ( Y_1 ) + \sigma ( Y_2 ) = d$.  Again, since $E$ has positive degree, by Corollary \ref{DegreeOfComponents} this is impossible.
\end{proof}

\begin{proposition}
\label{NoAutomorphisms}
If $( \gamma , \vec{c} )$ is not contained in any hyperplane of the form $\varphi (I, \cdot )^{-1} (k)$, then every semistable pointed curve has trivial automorphism group.
\end{proposition}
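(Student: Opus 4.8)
The plan is to show that, for a linearization off all the walls $\varphi(I,\cdot)^{-1}(k)$, the stabilizer in $\SL(d+1)$ of every semistable point is contained in the center; equivalently, that every $\sigma \in \PGL(d+1)$ with $\sigma(X)=X$ and $\sigma(p_i)=p_i$ for all $i$ is the identity. (Such projectivities are precisely the automorphisms of the pointed curve: by Corollary~\ref{cor-geomofstablecurves} each component of $X$ is a rational normal curve in the linear span it generates, so an automorphism of a component extends uniquely to that span, and the resulting data on the various spans glue to an element of $\PGL(d+1)$; since those spans generate $\PP^d$, it then suffices to prove that $\sigma$ fixes every component of $X$ pointwise.) I would split the argument into a curve-theoretic part and a group-theoretic part.

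The curve-theoretic part is that \emph{every irreducible component $C$ of a semistable $X$ has at least three distinct ``special points,''} where a special point of $C$ is either a marked point lying on $C$ or a point of $C \cap \overline{X\backslash C}$. The key input is Proposition~\ref{DegreeOfTails}: for a tail $E$ one has $\deg E = \sigma(E)\ge 1$, hence $c_E\ge 1$, and applying this also to the complementary tail $\overline{X\backslash E}$ and invoking Lemma~\ref{Additivity} shows that even the marked points lying strictly inside $E$ (away from its attaching node) have total weight $\ge 1$; since every $c_i<1$ and the hypothesis forces $c_I\ne 1$ for all nonempty $I$, those points occupy at least two distinct points of $E$. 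This gives three special points on a leaf component, and a component of valence $\ge 3$ already has three nodes, as does a valence-$2$ component with a marked point away from its two nodes. The only remaining case — a valence-$2$ component $C$ all of whose marked points sit at its two nodes — is excluded: applying the flag one-parameter subgroup of Proposition~\ref{Prop:Flag1PS} to the spans of the two connected curves $W_1,W_2$ hanging off those nodes gives $c_{W_i}\le 1+(\deg W_i)(1-\gamma)$, hence (since every marked point then lies on $W_1$ or $W_2$, which are disjoint, and $\deg W_1+\deg W_2 = d-\deg C$) $c\le 2+(d-\deg C)(1-\gamma)$; because $c=(d+1)-(d-1)\gamma$ this reads $(\deg C-1)(1-\gamma)\le 0$, impossible for $\deg C\ge 2$, while for $\deg C=1$ equality is forced and yields $\varphi(\{j:p_j\in W_1\},\cdot)=\deg W_1\in\Z$, i.e.\ a point on a wall.

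The group-theoretic part is that \emph{$\sigma$ fixes every component, hence every node, of $X$}; for then the three special points on each $C\cong\PP^1$ are fixed, so $\sigma|_C=\mathrm{id}$ and $\sigma=\mathrm{id}$ on $\PP^d$. First, the stabilizer is finite, for its identity component acts trivially on the finite set of components, and any nontrivial element fixing every component would also fix every node and every marked point, hence be the identity on each component. So $\sigma$ has finite order and acts on the dual graph of $X$ (a tree, as $X$ is connected of arithmetic genus zero). If $\sigma$ moved a component $C$, then since $\sigma$ fixes all $p_i$ the marked points on $C$ would lie in $C\cap\sigma(C)$, a single point that is necessarily a node of $C$; combined with the curve-theoretic part this forces $C$ to have valence $\ge 3$ with all its marked points at one node. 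One then uses that a finite-order tree automorphism fixes a vertex or an edge to locate a ``center'' about which the nontrivial $\sigma$-orbits of components are arranged, and argues that no marked point of $X$ can be $\sigma$-fixed away from that center — forcing the whole marked weight $c$ to lie at a single node or on a single component at $\le 2$ points, where it is $<2\le c$, a contradiction. Hence $\sigma$ fixes every component, and we are done.

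The main obstacle is this last step: ruling out \emph{every} nontrivial permutation of components by a finite-order $\sigma$ is a genuine case analysis, and it is complicated by multinodal singularities — several components through one point — which make both the ``center'' argument and the accounting of $\sigma$-fixed points delicate; in particular one must be sure that once some component is moved, every marked point really is forced onto the center and not merely onto some other $\sigma$-fixed-but-not-pointwise-fixed component. A secondary point requiring care is the flag-one-parameter-subgroup estimate in the valence-$2$ case of the curve-theoretic part: verifying it is an honest inequality when the arms $W_1,W_2$ are reducible and the attaching points multinodal, and matching the equality case precisely with a wall.
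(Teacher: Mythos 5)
Your curve-theoretic half is correct and coincides in substance with the paper's own proof: the paper rules out (i) a tail whose smooth locus carries marked points at at most one point, using $\deg E=\sigma(E)\ge 1$, Lemma \ref{Additivity}, the non-integrality hypothesis, and Corollary \ref{SmoothWeight} to show $\sum_{p_i\in E^{sm}}c_i>1$ must be spread over at least two smooth points; and (ii) an unmarked bridge, since $\sigma(Y_1)+\sigma(Y_2)=d$ would force $\deg E=0$ by Corollary \ref{DegreeOfComponents}. Your flag-1-PS computation in the valence-two case is a longer but valid substitute for (ii), including the identification of the equality case with a wall.

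The genuine gap is in your group-theoretic half, exactly where you flagged it, and the mechanism you propose there would fail. Writing $\tau$ for the automorphism (to keep it distinct from the weight function $\sigma$), it is not true that no marked point of $X$ can be $\tau$-fixed away from the center: take a long chain of components fixed by $\tau$, carrying essentially all of the weight at $\tau$-fixed smooth points, with $\tau$ swapping two leaf components at one end; the weight never concentrates at the center and the intended contradiction $c<2$ does not appear. What does give a contradiction in every such configuration is the swapped leaves themselves, and this closes the gap with no case analysis and no multinodal delicacy: if $\tau$ moves a leaf component $D$ of the dual tree, then every marked point of $D$ lies in $D\cap\tau(D)$, which is at most one point and is a singular point of $X$ (two distinct components meet in at most one point, by genus zero or by part (3) of Corollary \ref{cor-geomofstablecurves}); so $D$ is an irreducible tail with no marked points on its smooth locus, contradicting $\sum_{p_i\in D^{sm}}c_i>1$ from your curve-theoretic half. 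Hence every leaf is fixed; the fixed-vertex set of a tree automorphism is a subtree, and a subtree containing all leaves is the whole tree, so $\tau$ fixes every component, hence every node and every marked point, and your ``three special points'' statement forces $\tau=\mathrm{id}$. (The paper absorbs this same issue into an unproved ``if and only if''; note that its case (1) allows \emph{reducible} tails, which is precisely what captures a moved leaf as a tail with no marked points on its smooth locus.)
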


\begin{proof}
By Corollary \ref{cor-geomofstablecurves}, every semistable curve is a union of rational normal curves meeting in multinodal singularities. We claim that an automorphism $f$ of a semistable curve $(X, p_{1}, \cdots, p_{n})$ does not permute its irreducible components nontrivially. Indeed, it is straightforward to see that if there is a nontrivial permutation of irreducible components of $X$, then there are two distinct irreducible tails $E_{1}, E_{2}$ such that $f(E_{1}) = E_{2}$. But by (1) of Lemma \ref{lem:existencemarkedpoints}, they have marked points (say $p_{1}$ and $p_{2}$) on their smooth parts. This is impossible because $f(p_{1}) = p_{1} \in E_{1}$. Thus the automorphism $f$ induces automorphisms of its irreducible components, which are isomorphic to $\PP^{1}$.

It follows that such a curve $(X, p_1 , \ldots , p_n )$ has a non-trivial automorphism if and only if it contains either:
\begin{enumerate}
\item  an irreducible tail $E$ with all marked points of its smooth locus $E^{sm}$ supported on at most one point, or
\item  an irreducible component $E$ with $\vert E \cap \overline{X \backslash E} \vert = 2$ such that $E^{sm}$ contains no marked points.
\end{enumerate}
Both cases are impossible due to Lemma \ref{lem:existencemarkedpoints}.
\end{proof}

\begin{corollary}
\label{NoSemistablePoints}
If $( \gamma , \vec{c} )$ is not contained in any hyperplane of the form $\varphi (I, \cdot )^{-1} (k)$, then the corresponding GIT quotient admits no strictly semistable points.
\end{corollary}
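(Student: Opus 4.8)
The plan is to reduce the statement about strictly semistable points to the statement about automorphisms already proved in Proposition \ref{NoAutomorphisms}. Recall the general principle from GIT: a semistable point $x$ fails to be stable precisely when its orbit is not closed in the semistable locus, or its orbit is closed but its stabilizer is positive-dimensional. So the strategy splits into two cases, and in each I would derive a contradiction with the non-integrality hypothesis on $(\gamma,\vec c)$.

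First I would handle the positive-dimensional stabilizer case. If $(X,p_1,\ldots,p_n)\in U_{d,n}^{ss}$ has a positive-dimensional stabilizer in $\SL(d+1)$, then in particular its automorphism group as a pointed curve is positive-dimensional, hence non-trivial, contradicting Proposition \ref{NoAutomorphisms}. (One should be slightly careful that the $\SL(d+1)$-stabilizer of the embedded pointed curve equals the automorphism group of the abstract pointed curve; but since each component is a rational normal curve spanning its linear span by Corollary \ref{cor-geomofstablecurves}, an automorphism of the abstract pointed curve extends uniquely to a projectivity, and conversely, so the two groups agree.) This case is therefore immediate.

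The substantive case is a semistable point whose orbit is not closed. Here I would use the Hilbert--Mumford criterion: if the orbit of $(X,p_1,\ldots,p_n)$ is not closed, there is a non-trivial 1-PS $\lambda$ with $\mu_\lambda(X,p_1,\ldots,p_n)=0$ realizing the degeneration to a point with strictly smaller orbit, and equality in the numerical inequality of Proposition \ref{NumericalCriterion}. The idea is to run the argument of Proposition \ref{DegreeOfTails} but track the equality case: the limit cycle $X_0 = \lim_{t\to 0}\lambda(t)\cdot X$ acquires a new tail $E$ supported in the flag subspace associated to $\lambda$, and equality in the Hilbert--Mumford inequality forces $\varphi(E,\gamma,\vec c)\in\mathbb{Z}$ — i.e. $r = \deg(E)$ satisfies $r(1-\gamma) = c_E - 1$ exactly rather than the strict inequality one gets in the stable case. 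More precisely, chasing through the computation $\sum_{p_i\in E}c_i \le (r+1)(1+\gamma) - \gamma(2r+1) = r+1-\gamma r$ from the proof of Proposition \ref{DegreeOfTails}, a strictly semistable point forces this to be an equality for the destabilizing 1-PS, which is exactly the condition $\varphi(E,\cdot)=r$, placing $(\gamma,\vec c)$ on one of the forbidden hyperplanes. Alternatively — and this may be cleaner — one observes that if $(X,p_1,\ldots,p_n)$ is strictly semistable with non-closed orbit, the closure of its orbit contains a point with positive-dimensional stabilizer (the minimal orbit in the closure), and that point is again a semistable pointed curve of the type classified in Corollary \ref{cor-geomofstablecurves}, hence has a non-trivial automorphism, contradicting Proposition \ref{NoAutomorphisms} applied to \emph{that} curve.

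The main obstacle I anticipate is the bookkeeping in verifying that the degeneration $X_0$ produced by a strictly semistable point genuinely lies in $U_{d,n}^{ss}$ and genuinely has either a non-trivial automorphism or exhibits an integral value of $\varphi$ — in other words, making rigorous the passage from "$\mu_\lambda = 0$" to "the boundary stratum is one of the walls." This requires knowing that the flat limit of a rational normal curve under a 1-PS is again a connected, reduced union of rational normal curves of the shape described in Corollary \ref{cor-geomofstablecurves} (which follows from that corollary applied to $X_0$, since $X_0$ is itself semistable, being in the orbit closure of a semistable point), and then identifying the new tail and its marked-point set. Once that structural fact is in hand, the equality analysis is the same ceiling-function computation as in Lemma \ref{Additivity} and Proposition \ref{DegreeOfTails}, now read in the boundary (equality) case, and the contradiction with the hypothesis is immediate. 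Given the machinery already developed, I would expect the cleanest writeup to route entirely through the orbit-closure/automorphism dichotomy, invoking Proposition \ref{NoAutomorphisms} as a black box and avoiding re-deriving the numerical estimates.
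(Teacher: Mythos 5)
Your proposal is correct, and the route you yourself recommend as cleanest---passing to the closed orbit in the orbit closure, which must have positive-dimensional stabilizer, identifying that stabilizer with a non-trivial automorphism group of the pointed curve (using that $X$ spans $\PP^d$), and contradicting Proposition \ref{NoAutomorphisms}---is exactly the paper's proof. The alternative Hilbert--Mumford equality analysis you sketch is unnecessary; the paper does not pursue it.
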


\begin{proof}
If $U_{d,n}^{ss}$ contains strictly semistable points, then some of these points must have positive-dimensional stabilizer.  If $(X, p_1 , \ldots , p_n )$ is such a curve, then since $X$ spans $\PP^d$ by Proposition \ref{prop-degcurveunstable}, such a stabilizer cannot fix $X$ pointwise.  It follows that $(X, p_1 , \ldots , p_n )$ admits a positive-dimensional family of automorphisms, contradicting Proposition \ref{NoAutomorphisms}.
\end{proof}

\begin{proposition}
\label{GITWalls}
The hyperplanes $\varphi (I, \cdot )^{-1} (k)$ are the walls in the GIT chamber decomposition of $\Delta^\circ$.
\end{proposition}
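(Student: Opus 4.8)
The plan is to show that the hyperplanes $H_{I,k} := \varphi(I,\cdot)^{-1}(k)$, for $\emptyset\ne I\subsetneq [n]$ and $0\le k\le d-1$, are precisely the locus where the GIT stability condition changes, by combining the two directions: (i) crossing such a hyperplane genuinely changes semistability (so it \emph{is} a wall), and (ii) the complement of the union of these hyperplanes is contained in a single GIT chamber structure already, i.e. stability is locally constant there (so nothing else is a wall). Direction (ii) is essentially already in hand: by Corollary \ref{NoSemistablePoints}, at any linearization not on one of these hyperplanes there are no strictly semistable points, and by general VGIT \cite{DH,Thaddeus} the walls are exactly the locus where strictly semistable points appear; hence every wall is contained in $\bigcup_{I,k} H_{I,k}$.

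For direction (i), the key is that on each hyperplane $H_{I,k}$ the stability condition is strictly different from that of an adjacent chamber, so each $H_{I,k}$ actually \emph{supports} a wall rather than lying in the interior of a chamber. I would argue as follows. Fix $I$ and $k$ with $1\le k \le d-1$ (the reindexing via additivity, $H_{I,k}=H_{I^c,d-1-k}$, lets us reduce to this range; the endpoint cases are handled by the same computation). Pick a linearization $(\gamma,\vec c)$ on $H_{I,k}$ in general position, so that $\varphi(J,\gamma,\vec c)\notin\mathbb Z$ for all $J\ne I, I^c$, and so that $c_I$ lies strictly between $1$ and $c-1$. Now build a pointed rational curve $(X,p_1,\ldots,p_n)$ which is a chain of two rational normal curves $E_1\cup E_2$ meeting at a single node, with the marked points indexed by $I$ lying on $E_1$ and those indexed by $I^c$ on $E_2$, and with $\deg E_1 = k$, $\deg E_2 = d-k$. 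On $H_{I,k}$ one has $\varphi(E_1,\gamma,\vec c)=k$ exactly, so the numerical inequality of Proposition \ref{DegreeOfTails}/Proposition \ref{Prop:Flag1PS} for the 1-PS $\lambda_V$ with $V = \langle E_1\rangle$ is an \emph{equality}; thus this curve is strictly semistable, and it has a nontrivial $\mathbb C^*$ in its stabilizer (the 1-PS $\lambda_V$ itself scales the node). Perturbing $(\gamma,\vec c)$ to one side makes $\varphi(E_1)$ slightly less than $k$, forcing $\deg E_1 \ge \sigma(E_1)$ with $\sigma(E_1)$ dropping, so $(X,p_1,\ldots,p_n)$ with these degrees becomes unstable; perturbing to the other side forces $\deg E_1 = k$ still allowed but now with strict inequality, so it becomes stable. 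Hence semistability is not locally constant across $H_{I,k}$, i.e. $H_{I,k}$ supports a wall.

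Putting (i) and (ii) together: every wall lies in $\bigcup H_{I,k}$, and every $H_{I,k}$ contains a wall; since each $H_{I,k}$ is irreducible (it is a hyperplane section of $\Delta^\circ$) and the wall-and-chamber decomposition of VGIT consists of finitely many hyperplanes \cite{DH,Thaddeus}, each $H_{I,k}$ must be a wall in its entirety. This gives the claimed equality of sets of walls.

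The main obstacle I anticipate is the bookkeeping in direction (i): one must verify that the two-component curve $(X,p_1,\ldots,p_n)$ constructed above is actually semistable at the chosen linearization on $H_{I,k}$ — not merely that the distinguished 1-PS $\lambda_V$ gives $\mu_\lambda=0$ — and that after perturbation the \emph{only} relevant destabilizing (resp. stabilizing) change comes from the subset $I$. This requires checking the Hilbert–Mumford inequality of Proposition \ref{NumericalCriterion} against all other 1-PS's, which one controls using the degree computations of Corollary \ref{DegreeOfComponents} together with the non-integrality of $\varphi(J,\cdot)$ for $J\ne I,I^c$ (so those inequalities stay strict under small perturbation). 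A clean way to organize this is to first establish that for linearizations off all the hyperplanes the set of semistable curves is \emph{exactly} the set of curves all of whose connected subcurves $E$ satisfy $\deg E = d - \sum_Y \sigma(Y)$ as in Corollary \ref{DegreeOfComponents}, and then observe directly that this combinatorial stability condition jumps precisely when some $\varphi(I,\cdot)$ crosses an integer.
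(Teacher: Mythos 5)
Your proposal follows essentially the same two-step argument as the paper: walls are contained in the hyperplanes because off them there are no strictly semistable points (Corollary \ref{NoSemistablePoints}), and each hyperplane genuinely is a wall because $\sigma$ jumps there, so by Proposition \ref{DegreeOfTails} the forced degree of an $I$-marked tail, and hence the stable locus, changes. The paper's proof is in fact terser than yours and does not exhibit an explicit witness curve; your added care is reasonable, though note that your two-component chain does not carry a $\mathbb{C}^*$-stabilizer (the closed-orbit strictly semistable curves at the wall are the three-component $(I,k)$-bridges of Proposition \ref{WallCrossing}), and the side on which $\sigma(I)$ jumps to $k+1$ is the side where $\varphi(I,\cdot)$ exceeds $k$, not where it drops below.
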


\begin{proof}
By Corollary \ref{NoSemistablePoints}, if a linearization does not lie on any of these hyperplanes, then it admits no strictly semistable points.  Hence the GIT walls must be contained in these hyperplanes.  To see that each hyperplane $\varphi (I, \cdot )^{-1} (k)$ yields a wall in $\Delta^\circ$, we must show that the stable locus changes when each such hyperplane is crossed.  But it is clear from the definition that the function $\sigma$ in \S\ref{section:WeightFcn} changes along these hyperplanes, so by Proposition \ref{DegreeOfTails}, GIT stability changes as well.
\end{proof}

\section{From Deligne-Mumford to GIT}\label{section:Morphism}

In this section we prove item (2) of Theorem \ref{thm:DM-GIT}, i.e., that the GIT quotients $U_{d,n} \gquot \SL(d+1)$ receive a birational morphism from the moduli space of stable curves $\overline{M}_{0,n}$.  The main tool we use is the Kontsevich space of stable maps $\overline{M}_{0,n} ( \PP^d , d)$ \cite{FPnotes}.  The basic idea is as follows.  The product of evaluation maps yields a morphism $\overline{M}_{0,n} ( \PP^d , d) \rightarrow (\PP^d)^n$.  By pushing forward the fundamental cycle of each curve under each stable map, there is also a morphism $\overline{M}_{0,n} ( \PP^d , d) \rightarrow \Chow(1,d,\PP^d)$.  By functoriality, one sees that together these yield a morphism \[\phi : \overline{M}_{0,n} ( \PP^d , d) \rightarrow U_{d,n} \subset \Chow(1,d,\PP^d)\times (\PP^d)^n.\]  This map is clearly $\SL(d+1)$-equivariant.  We prove below that for a general linearization $L$ on $U_{d,n}$, there is a corresponding linearization $L'$ on $\overline{M}_{0,n} ( \PP^d , d)$ such that there is an induced
\begin{enumerate}
\item  morphism $\overline{M}_{0,n} ( \PP^d , d)\gquot_{L'}\SL(d+1) \rightarrow U_{d,n}\gquot_L\SL(d+1)$, and
\item  isomorphism $\overline{M}_{0,n} ( \PP^d , d)\gquot_{L'}\SL(d+1) \cong \overline{M}_{0,n}$.
\end{enumerate}
This is enough to draw the desired conclusion:

\begin{lemma}\label{lemma-morphism}
If (1) and (2) above hold for all $L\in\Delta^\circ$ that do not lie on a GIT wall, then for any $L\in\Delta^\circ$ there is a regular birational morphism $\overline{M}_{0,n} \rightarrow U_{d,n}\gquot_L\SL(d+1)$.
\end{lemma}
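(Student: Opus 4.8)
The plan is to reduce the statement for an arbitrary $L\in\Delta^\circ$ to the wall-free case covered by hypotheses (1) and (2), using the standard wall-crossing comparison from variation of GIT.

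First, suppose $L\in\Delta^\circ$ does not lie on any GIT wall. Then by hypothesis there is a linearization $L'$ on $\overline{M}_{0,n}(\PP^d,d)$ for which (1) and (2) hold, and composing the induced map of (1) with the isomorphism of (2) produces a regular morphism $\psi_L\colon \overline{M}_{0,n}\to U_{d,n}\gquot_L\SL(d+1)$. To check that $\psi_L$ is birational I would argue on the dense open $M_{0,n}$: the locus in $\overline{M}_{0,n}(\PP^d,d)$ of stable maps that embed $\PP^1$ as a rational normal curve with $n$ distinct marked points is $\SL(d+1)$-invariant with quotient canonically $M_{0,n}$, and $\phi$ sends this locus into $U_{d,n}^{s}$ by Proposition \ref{StablePoints}; unwinding the definitions, $\psi_L$ restricts over it to the canonical open immersion $M_{0,n}\hookrightarrow U_{d,n}\gquot_L\SL(d+1)$. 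Since $M_{0,n}$ is dense in both source and target, $\psi_L$ is birational.

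Next, suppose $L\in\Delta^\circ$ lies on a GIT wall. Because $L\in\Delta^\circ$, it has nonempty stable locus (Proposition \ref{StablePoints}), so by Proposition \ref{GITWalls} the wall through $L$ is an interior wall of the (locally finite) chamber decomposition of $\Delta^\circ$; as $\Delta^\circ$ is an open convex subset of a rational hyperplane, I can choose $L_+\in\Delta^\circ$ in an open chamber adjacent to $L$, hence off every wall, so that the morphism $\psi_{L_+}$ of the previous paragraph is available. The key input is the standard VGIT fact \cite{DH,Thaddeus} that moving from a wall into an adjacent open chamber only discards strictly semistable orbits, so that $U_{d,n}^{ss}(L_+)\subseteq U_{d,n}^{ss}(L)$; this inclusion descends to a regular morphism $p\colon U_{d,n}\gquot_{L_+}\SL(d+1)\to U_{d,n}\gquot_L\SL(d+1)$, which is an isomorphism over $M_{0,n}$ and in particular birational. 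The composite $p\circ\psi_{L_+}\colon\overline{M}_{0,n}\to U_{d,n}\gquot_L\SL(d+1)$ is then the desired regular birational morphism.

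I expect the main point to watch is this reduction step: one must confirm that a wall-point of $\Delta^\circ$ always has an adjacent chamber lying inside $\Delta^\circ$, and one must use the inclusion of semistable loci in the correct direction, $U^{ss}(L_+)\subseteq U^{ss}(L)$ rather than its reverse — which is exactly where the nonemptiness of the stable locus (Proposition \ref{StablePoints}) and the interior-wall classification (Proposition \ref{GITWalls}) are needed. Everything else is formal bookkeeping: composing morphisms and verifying that each one restricts to the identity on the common dense open $M_{0,n}$.
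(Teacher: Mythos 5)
Your proposal is correct and follows essentially the same route as the paper: perturb a wall linearization $L$ to a nearby chamber linearization (the paper's $L_\epsilon$, your $L_+$), invoke the standard VGIT inclusion of semistable loci to get the birational morphism from the chamber quotient down to the wall quotient, compose with the morphism furnished by (1) and (2), and deduce birationality from Proposition \ref{StablePoints}. The extra care you take with the direction of the inclusion $U_{d,n}^{ss}(L_+)\subseteq U_{d,n}^{ss}(L)$ and with the existence of an adjacent chamber inside $\Delta^\circ$ is exactly what the paper's phrase ``by general variation of GIT'' is implicitly relying on.
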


\begin{proof}
Given $L\in\Delta^\circ$, we can perturb it slightly to obtain a linearization $L_{\epsilon}$ such that stability and semistability coincide.  By general variation of GIT, there is a birational morphism from the $L_\epsilon$-quotient to the $L$-quotient.  Using (1) and (2) we then have \[\overline{M}_{0,n} \cong \overline{M}_{0,n} ( \PP^d , d) \gquot_{L_{\epsilon}'} SL(d+1) \rightarrow	U_{d,n} \gquot_{L_{\epsilon}} SL(d+1) \rightarrow U_{d,n}\gquot_{L} SL(d+1).\] Birationality of this morphism follows from Proposition \ref{StablePoints}.
\end{proof}

\subsection{Equivariant maps and GIT}

Here we prove a generalized form of the result needed for item (1) above.

\begin{lemma}\label{lem-stabilitybirational}
Let $f : X \to Y$ be a $G$-equivariant birational morphism between two projective varieties.  Suppose $X$ is normal, and let $L$ be a linearization on $Y$. Then there exists a linearization $L'$ on $X$ such that
\[
	f^{-1}(Y^{s}(L)) \subset X^{s}(L')
	\subset X^{ss}(L') \subset f^{-1}(Y^{ss}(L)).
\]
\end{lemma}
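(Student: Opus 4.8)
The plan is to build $L'$ explicitly as a small perturbation of the pullback $f^*L$ by an $f$-exceptional (or at least $f$-small) ample twist, and then to exploit the fact that $f$ is birational to compare Hilbert--Mumford indices. Since $f$ is birational, there is an effective divisor $D$ on $X$ supported on the locus where $f$ is not an isomorphism; using normality of $X$ we may choose $D$ so that $-D$ is $f$-ample (or simply pick any ample $A$ on $X$), and set $L' := f^*L \otimes A^{\otimes \epsilon}$ for a sufficiently small rational $\epsilon > 0$. The linearization structure on $f^*L$ is inherited from $L$ via the equivariance of $f$, and $A$ can be chosen $G$-linearized since $G$ is reductive acting on a projective variety; so $L'$ is a genuine $G$-linearization on $X$.

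The key computation is with the numerical criterion. For a one-parameter subgroup $\lambda$ and a point $x \in X$ with $y = f(x)$, functoriality of the Hilbert--Mumford index under the equivariant morphism $f$ together with the projection formula gives $\mu^{f^*L}(x,\lambda) = \mu^{L}(y,\lambda)$ whenever $\lim_{t\to 0}\lambda(t)\cdot x$ exists — which it always does by properness of $X$ — and its image is $\lim_{t\to0}\lambda(t)\cdot y$. Hence $\mu^{L'}(x,\lambda) = \mu^{L}(f(x),\lambda) + \epsilon\,\mu^{A}(x,\lambda)$. This immediately yields the outer inclusion: if $f(x)\in Y^{ss}(L)$ is \emph{not} in $Y^{s}(L)$ the intermediate inequalities are what we must be careful about, but for the containment $X^{ss}(L') \subset f^{-1}(Y^{ss}(L))$ we argue contrapositively — if $f(x)\notin Y^{ss}(L)$, then $\mu^{L}(f(x),\lambda) > 0$ for some $\lambda$, and since $\mu^{L}$ takes values in a discrete set (it is, up to normalization, an integer), for $\epsilon$ small enough $\mu^{L'}(x,\lambda) > 0$ as well, so $x\notin X^{ss}(L')$. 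For the containment $f^{-1}(Y^{s}(L)) \subset X^{s}(L')$: if $f(x)\in Y^{s}(L)$ then $\mu^{L}(f(x),\lambda) < 0$, again a discrete negative quantity, so $\mu^{L'}(x,\lambda) = \mu^{L}(f(x),\lambda) + \epsilon\,\mu^{A}(x,\lambda) < 0$ for all $\lambda$ once $\epsilon$ is small relative to the (bounded, by properness and the fact that only finitely many $\lambda$ up to conjugacy and scaling matter) contribution of $\mu^{A}$. Hence $x\in X^{s}(L')$. The chain $X^{s}(L')\subset X^{ss}(L')$ is automatic.

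The main obstacle is the uniformity of the choice of $\epsilon$: the numerical criterion ranges over infinitely many one-parameter subgroups, so naively "$\epsilon$ small enough" could fail. The standard way around this is to invoke the fact (from the boundedness underlying the Hilbert--Mumford criterion, e.g. as in Mumford's GIT or the Kempf--Ness setup) that stability and semistability can be tested on a finite set of one-parameter subgroups once one restricts to a quasi-compact family, combined with the observation that $\mu^{L}$ and $\mu^{A}$ scale linearly in $\lambda$ so only their \emph{signs} and the \emph{ratio} matter; since $\mu^{L}(f(x),\cdot)$ is bounded away from $0$ on the relevant compact set whenever it is nonzero, a single $\epsilon$ works uniformly. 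I would also need to double-check that normality of $X$ is genuinely used — it enters in guaranteeing that $f_*\mathcal{O}_X = \mathcal{O}_Y$ so that sections of $f^*L$ pull back from sections of $L$, which is what makes the semistable-locus comparison on the level of invariant sections (rather than just numerically) go through cleanly; alternatively one can run the whole argument purely numerically and normality is used only to ensure $f^*L$ behaves well, in which case this is the one hypothesis to track carefully.
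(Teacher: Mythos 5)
Your construction of $L'$ is essentially the paper's: the proof there takes an $f$-ample $M$, linearizes a power of it, and sets $L' = f^{*}(L^{m})\otimes M$ for $m\gg 0$, which is your $f^{*}L\otimes A^{\otimes\epsilon}$ with $\epsilon = 1/m$. The difference is that the paper does not redo the Hilbert--Mumford comparison at all: it simply cites \cite[Theorem 3.11]{Hu96} for the three inclusions. Your sketch of that comparison is the standard argument behind such relative GIT statements, but as written it has two soft spots. First, you misplace the role of normality. It is not needed for $f_{*}\mathcal{O}_X=\mathcal{O}_Y$ or for pulling back sections; it is needed precisely to $G$-linearize (a power of) the ample twist $M$, via \cite[Corollary 1.6]{git} --- ``$G$ reductive acting on a projective variety'' is not by itself enough to linearize an arbitrary line bundle, and this is the one place the hypothesis genuinely enters. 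Second, the uniformity of $\epsilon$ (equivalently, the existence of a single $m$ working for all points and all one-parameter subgroups) is exactly the content of the cited theorem, and your proposed fix --- testing (semi)stability on finitely many one-parameter subgroups --- is not literally available; the correct finiteness statement is that there are only finitely many weight polytopes (states) for the torus action, which bounds the ratio $\mu^{A}(x,\lambda)/\lvert\mu^{L}(f(x),\lambda)\rvert$ uniformly whenever the denominator is nonzero. So the approach is right and matches the paper; to make it a complete proof you should either carry out that finiteness argument carefully or, as the authors do, quote \cite[Theorem 3.11]{Hu96}, and you should correct the justification for linearizing $A$.
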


\begin{proof}
Take an $f$-ample divisor $M$, the existence of which is guaranteed by \cite[5.3, 5.5]{EGA2}.  Since $X$ is normal, some integral multiple of $M$ is $G$-linearized \cite[Corollary 1.6]{git}, so we may assume that $M$ is $G$-linearized.  Let $L' = f^{*}(L^{m}) \otimes M$ for sufficiently large $m$.  Then $L'$ is ample and the above inclusions hold by
\cite[Theorem 3.11]{Hu96}.
\end{proof}

In particular, if $Y^{s}(L) = Y^{ss}(L)$, then $X^{s}(L') = X^{ss}(L') = f^{-1}(Y^{s}(L))$. 

\begin{corollary}
With the same assumptions as the previous lemma, there is an induced morphism of quotients
\[
	\overline{f} : X \gquot_{L'} G \to Y \gquot_{L} G.
\]
\end{corollary}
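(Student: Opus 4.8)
The plan is to invoke the standard fact from geometric invariant theory that a $G$-linearized ample line bundle on a projective variety produces a good categorical quotient of its semistable locus, and that this quotient restricts to a geometric quotient on the stable locus. Concretely, by Mumford's construction (see \cite[Theorem 1.10]{git}), the line bundle $L'$ on $X$ furnished by Lemma \ref{lem-stabilitybirational} determines a projective good quotient $\pi_X : X^{ss}(L') \to X\gquot_{L'}G$, and likewise $\pi_Y : Y^{ss}(L) \to Y\gquot_{L}G$. What I must produce is a morphism between these two quotients compatible with $f$.

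First I would restrict the equivariant morphism $f : X \to Y$ to $X^{ss}(L')$. By the last inclusion in Lemma \ref{lem-stabilitybirational}, $f$ carries $X^{ss}(L')$ into $Y^{ss}(L)$, so composing with $\pi_Y$ gives a $G$-invariant morphism $\pi_Y \circ f|_{X^{ss}(L')} : X^{ss}(L') \to Y\gquot_{L}G$ (invariance is immediate since $f$ is equivariant and $\pi_Y$ is invariant). Next I would apply the universal property of the good quotient $\pi_X$: any $G$-invariant morphism out of $X^{ss}(L')$ to a variety factors uniquely through $\pi_X$. This yields the desired $\overline{f} : X\gquot_{L'}G \to Y\gquot_{L}G$ with $\overline{f}\circ\pi_X = \pi_Y\circ f|_{X^{ss}(L')}$, and uniqueness of the factorization makes $\overline{f}$ well-defined and a morphism of varieties.

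There is really no serious obstacle here; the only point requiring a word of care is that both quotients exist as the honest projective GIT quotients attached to the \emph{ample} linearizations $L$ and $L'$ — this is exactly what Lemma \ref{lem-stabilitybirational} secures, since it produces $L'$ ample on $X$ and the hypotheses on $Y$ (projective, with $L$ a linearization) give the quotient on the target. One might also remark that the good quotient is a categorical quotient in the category of all varieties, not merely affine ones, which is what licenses the factorization through $\pi_X$; this is standard (\cite[Theorem 1.10]{git}). So the corollary follows formally from Lemma \ref{lem-stabilitybirational} together with the basic existence and universal-property statements of GIT, with nothing left to compute.
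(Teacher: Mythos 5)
Your argument is correct and is essentially identical to the paper's proof: both use the inclusion $f(X^{ss}(L')) \subset Y^{ss}(L)$ from Lemma \ref{lem-stabilitybirational} to get a $G$-invariant morphism $X^{ss}(L') \to Y\gquot_L G$, and then factor it through the categorical quotient $X\gquot_{L'}G$. The extra remarks about ampleness and the universal property are fine but add nothing beyond what the paper leaves implicit.
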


\begin{proof}
By Lemma \ref{lem-stabilitybirational}, we have $f(X^{ss}(L')) \subset Y^{ss}(L)$, so there is a morphism $X^{ss}(L') \rightarrow Y\gquot_L G$.  This is $G$-invariant, so it must factor through the categorical quotient of $X^{ss}(L')$ by $G$, which is precisely the GIT quotient $X\gquot_{L'}G$.
\end{proof}

\subsection{Invariant maps and unstable divisors}

In this subsection we address item (2) above.  To begin, recall that there is a forgetting-stabilizing map $\pi: \overline{M}_{0,n} ( \PP^d , d) \rightarrow \overline{M}_{0,n}$.  Since this is $\SL(d+1)$-invariant, the universal property of categorical quotients implies that there is an induced map \[\overline{\pi} : \overline{M}_{0,n} ( \PP^d , d)\gquot_{L'}\SL(d+1) \rightarrow \overline{M}_{0,n}.\] for any linearization $L'$.  The main result here is that if $L\in\Delta^\circ$ does not lie on a GIT wall and $L'$ is as in Lemma \ref{lem-stabilitybirational}, then this induced quotient morphism is in fact an isomorphism.  In what follows, we always consider a linearization $L'$ on $\overline{M}_{0,n} ( \PP^d , d)\gquot_{L'}\SL(d+1)$ that is of this form, so that stability and semistability coincide.  To show that $\overline{\pi}$ is an isomorphism, we show that it has relative Picard number zero.

We first recall some divisor classes on $\overline{M}_{0,n}(\PP^{d}, d)$. For $0 \le i \le d$ and $I \subset [n]$, let $D_{i, I}$ be the closure of the locus of stable maps $(f : (C_{1} \cup C_{2}, p_{1}, \cdots, p_{n}) \to \PP^{d})$ such that
\begin{itemize}
	\item the domain of $f$ has two irreducible components $C_{1}, C_{2}$;
	\item $p_{j} \in C_{1}$ if and only if $j \in I$;
	\item $\deg f_{*}C_{1} = i$ (equivalently, $\deg f_{*}C_{2} = d - i$).
\end{itemize}
It is well known that $D_{i, I}$ is codimension one if it is nonempty. By definition, $D_{i, I} = D_{d-i, I^{c}}$ so whenever we write down $D_{i, I}$, we may assume that $|I| \le \frac{n}{2}$.   Note that $D_{i, I} = \emptyset$ if and only if $i = 0$ and $|I| \le 1$.
Also, let
\[
	D_{deg} = \{f : (C, p_{1}, \cdots, p_{n}) \to \PP^{d}~|~
	\mbox{span of $f(C)$ is not $\PP^{d}$}\},
\]
which is a divisor as well.

First, a couple preliminary results:

\begin{lemma}\label{lem-unstabledivisor}
For $0 \le i \le d$ and $I \subset [n]$, if $1 < |I| \le \frac{n}{2}$, at most one of $D_{i, I}$ for $i = 0, 1, \ldots, d$ can be stable.  If $\vert I \vert \leq 1$, then none of the $D_{i,I}$ are stable.
\end{lemma}

\begin{proof}
By Lemma \ref{lem-stabilitybirational} and the stability assumption, to compute stability of $x \in \overline{M}_{0,n} ( \PP^d , d)$, it suffices to consider the stability of $\phi(x) \in U_{d,n}$.

Choose a general point $(f:(C_{1} \cup C_{2}, p_{1}, \cdots, p_{n})
\to \PP^{d})$ in $D_{i, I}$.
Then $f(C_{1}) \subset \PP^{d}$ is a degree $i$ rational normal curve
and $f(C_{2}) \subset \PP^{d}$ is a degree $d-i$ rational normal curve.
(If $i = 0$, then $f(C_{1})$ is a point.)
By dimension considerations, the linear spans of $f(C_{1})$ and $f(C_{2})$ meet at a unique
point, namely $f(C_{1} \cap C_{2})$.
By Proposition \ref{DegreeOfTails}, $f(C_1 \cup C_2 )$ is stable only if $\deg(f \vert_{C_1})=\sigma (I)$ and $\deg(f \vert_{C_2})=\sigma (I^c )$, so at most one $D_{i, I}, i\in\{0,\ldots,d\}$, is stable.  On the other hand, if $I$ contains at most 1 marked point, then $\sigma (I) = 0$, so $D_{i,I}$ is not stable.
\end{proof}

\begin{lemma}\label{lem-PicQuot}
Let $X$ be a normal projective variety with a linearized $\SL(n)$-action, and suppose that $X^{ss} = X^{s}$.  Then \[\mathrm{Pic}(X\gquot \SL(n))_\QQ \cong  \mathrm{Pic}(X^{ss})_\QQ.\]
\end{lemma}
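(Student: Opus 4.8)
The plan is to exploit the fact that, when $X^{ss} = X^s$, the GIT quotient $q \colon X^{ss} \to X \gquot \SL(n)$ is a geometric quotient and moreover a locally-isotrivial principal $\PGL(n)$-bundle in the \'etale topology away from the (finitely many) points with nontrivial but finite stabilizer; more robustly, $q$ is a good quotient whose fibers are single orbits. The key mechanism I would invoke is Kempf descent: a line bundle on $X^{ss}$ descends to the quotient precisely when the stabilizer of each point acts trivially on the fiber. First I would pull back: the map $q^* \colon \mathrm{Pic}(X\gquot\SL(n))_\QQ \to \mathrm{Pic}(X^{ss})_\QQ$ is well-defined, and I claim it is the desired isomorphism.

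For injectivity of $q^*$, suppose $L$ is a line bundle on the quotient with $q^*L$ trivial. Since $q$ is surjective with connected fibers (each fiber is a single orbit, hence irreducible), $q_* \OO_{X^{ss}} = \OO_{X\gquot\SL(n)}$, so $L \cong q_*q^*L$ is trivial; rationally this gives injectivity. The main content is surjectivity. Given a line bundle $M$ on $X^{ss}$, I would first replace $M$ by a power $M^{\otimes k}$ so that, using the fact that there are only finitely many orbits with nontrivial stabilizer and each such stabilizer is finite (of order dividing some fixed $N$, since $\SL(n)$ acts with the center acting trivially on $X$ — more precisely one passes to the $\PGL(n)$-action), the induced linear action of each stabilizer on the fiber of $M^{\otimes k}$ is trivial. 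Here I would cite Kempf's descent lemma (as in \cite[Theorem 2.3]{DN} or the standard treatment in Drezet--Narasimhan). Then $M^{\otimes k} = q^* L$ for some line bundle $L$ on the quotient, so $M$ itself lies in the image of $q^*$ rationally. This shows $q^*$ is surjective on $\mathrm{Pic}(\cdot)_\QQ$.

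One technical point deserves care: the hypothesis that $X$ is normal and $\QQ$-factorial is used to guarantee that the quotient $X\gquot\SL(n)$ is itself normal and $\QQ$-factorial, so that the Picard groups tensored with $\QQ$ behave well and the descended objects genuinely give $\QQ$-line bundles; I would note this and the fact that $X^{ss}$ is open in $X$ hence also normal. The main obstacle I anticipate is handling the finitely many points with nontrivial finite stabilizer cleanly — ensuring the uniform power $k$ exists requires knowing these stabilizers are finite (which follows from $X^{ss} = X^s$, since stable points have finite stabilizer modulo the kernel of the action) and bounding their orders, or alternatively arguing orbit-by-orbit after passing to $\QQ$-coefficients where torsion phenomena disappear. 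Everything else — the projection formula, connectedness of fibers, openness of $X^{ss}$ — is routine.
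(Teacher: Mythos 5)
Your proposal is correct and follows essentially the same route as the paper: injectivity of $q^*$ for the quotient map $q\colon X^{ss}\to X\gquot\SL(n)$, and surjectivity after tensoring with $\QQ$ via Kempf's descent lemma applied to a suitable power of the line bundle, using that $X^{ss}=X^s$ forces every stabilizer to be finite. The only inaccuracy is your claim that only finitely many points have nontrivial stabilizer (such points can fill a divisor); what is actually needed, and what both arguments implicitly use, is that the stabilizer orders are uniformly bounded so that a single power works for all points at once.
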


\begin{proof}
Since $X$ is normal, by \cite[Theorem 7.2]{Dol03} we have a canonical exact sequence
\[
	\mathrm{Pic}^{\SL(n)}(X^{ss})
	\stackrel{\alpha}{\to} \mathrm{Pic}(X^{ss}) \to \mathrm{Pic}(\SL(n))
\]
where $\mathrm{Pic}^{\SL(n)}(X^{ss})$ is the group of $\SL(n)$-linearized line bundles.  Thus $\alpha$ is surjective, since $\mathrm{Pic}(\SL(n)) = 0$. Moreover, since $\mathrm{Hom}(\SL(n), \CC^{*})$ is trivial, by \cite[Proposition 1.4]{git} we see that $\alpha$ is injective.  Thus $\mathrm{Pic}^{\SL(n)}(X^{ss}) \cong \mathrm{Pic}(X^{ss})$.

On the other hand, let $\mathrm{Pic}^{\SL(n)}(X^{ss})^{0}$ be the subgroup of $\SL(n)$-linearized line bundles $L$ such that the stabilizer of a point in a closed orbit acts on $L$ trivially. Since any point over $X^{ss} = X^{s}$ has finite stabilizer, $\mathrm{Pic}^{\SL(n)}(X^{ss})^{0}$ has finite index in $\mathrm{Pic}^{\SL(n)}(X^{ss})$ and $\mathrm{Pic}^{\SL(n)}(X^{ss})^{0}_{\QQ} \cong \mathrm{Pic}^{\SL(n)}(X^{ss})_{\QQ}$.  Finally, by Kempf's descent lemma \cite[Theorem 2.3]{DN89}, $\mathrm{Pic}(X\gquot \SL(n)) \cong \mathrm{Pic}^{\SL(n)}(X^{ss})^{0}$. In summary, we have a sequence of isomorphisms
\[
	\mathrm{Pic}(X^{ss})_{\QQ} \cong
	\mathrm{Pic}^{\SL(n)}(X^{ss})_{\QQ} \cong
	\mathrm{Pic}^{\SL(n)}(X^{ss})^{0}_{\QQ} \cong
	\mathrm{Pic}(X \gquot \SL(n))_{\QQ}.
\]
\end{proof}

We now prove the main result.

\begin{proposition}\label{thm-isomorphism}
The map $\overline{\pi} : \overline{M}_{0,n} ( \PP^d , d) \gquot_{L'} \SL(d+1) \to \overline{M}_{0,n}$ is an isomorphism.
\end{proposition}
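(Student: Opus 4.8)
The plan is to follow the outline announced just before the statement: prove that $\overline{\pi}$ is a birational morphism of relative Picard number zero, and deduce that it is an isomorphism. Set $X:=\overline{M}_{0,n}(\PP^d,d)$ and $Z:=X\gquot_{L'}\SL(d+1)$, with $L'$ chosen as in the preceding discussion so that $X^{ss}=X^s$ for the $\SL(d+1)$-action. The key reduction is that it is enough to prove $\mathrm{Pic}(Z)_{\QQ}=\overline{\pi}^*\mathrm{Pic}(\overline{M}_{0,n})_{\QQ}$: granting this, if $\overline{\pi}$ contracted an irreducible curve $C$ then, writing an arbitrary line bundle $L$ on $Z$ as $L\equiv_{\QQ}\overline{\pi}^*D$, we would get $L\cdot C=\overline{\pi}^*D\cdot C=D\cdot\overline{\pi}_*C=0$, which is impossible for $L$ ample. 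Hence $\overline{\pi}$ contracts no curve, so it is finite, and a finite birational morphism onto a normal variety is an isomorphism. Thus the whole content is the computation of $\mathrm{Pic}(Z)_{\QQ}$.

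Birationality is easy: by Proposition \ref{StablePoints} the locus of pointed rational normal curves is a nonempty $\SL(d+1)$-invariant open subset of $X^{ss}$, its image in $Z$ is an open subvariety isomorphic to $M_{0,n}$, and over it $\overline{\pi}$ is the natural identification with $M_{0,n}\subset\overline{M}_{0,n}$. For the Picard group I would invoke Lemma \ref{lem-PicQuot} --- valid since $X$ is a normal $\QQ$-factorial projective variety with $X^{ss}=X^s$ --- to get $\mathrm{Pic}(Z)_{\QQ}\cong\mathrm{Pic}(X^{ss})_{\QQ}$ via pullback along the quotient map $q:X^{ss}\to Z$. Since $\overline{\pi}\circ q=\pi|_{X^{ss}}$, where $\pi:X\to\overline{M}_{0,n}$ is the stabilization map, and since $\mathrm{Pic}(\overline{M}_{0,n})_{\QQ}$ is generated by the boundary divisors $\delta_{0,I}$, the desired equality becomes: $\mathrm{Pic}(X^{ss})_{\QQ}$ is spanned by the restrictions $\pi^*\delta_{0,I}|_{X^{ss}}$. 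As the restriction map $\mathrm{Pic}(X)_{\QQ}\to\mathrm{Pic}(X^{ss})_{\QQ}$ is surjective with kernel containing the class of every divisorial component of $X\setminus X^{ss}$, it suffices to check that a generating set of $\mathrm{Pic}(X)_{\QQ}$ lies in the span of $\{\pi^*\delta_{0,I}\}$ together with those unstable divisor classes.

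For this I would start from Pandharipande's presentation of $\mathrm{Pic}(\overline{M}_{0,n}(\PP^d,d))_{\QQ}$: it is generated by the boundary divisors $D_{i,I}$ and a finite list of tautological classes (the $\psi$-classes and the hyperplane-incidence classes). For the boundary generators, Lemma \ref{lem-unstabledivisor} is exactly what is needed: for each fixed $I$, all of $D_{0,I},\dots,D_{d,I}$ except $D_{\sigma(I),I}$ restrict to zero on $X^{ss}$, and combining this with the relation expressing $\pi^*\delta_{0,I}$ as a combination of the boundary divisors $D_{i,I}$ shows $D_{\sigma(I),I}|_{X^{ss}}$ is a multiple of $\pi^*\delta_{0,I}|_{X^{ss}}$. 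For the tautological generators, I would use the standard comparison relations among tautological classes on the Kontsevich space to rewrite each $\psi$-class and each incidence class as a $\QQ$-linear combination of boundary divisors; upon restriction to $X^{ss}$ every unstable boundary term drops out, so these classes too lie in the span of $\{\pi^*\delta_{0,I}|_{X^{ss}}\}$. This gives $\mathrm{Pic}(X^{ss})_{\QQ}=\langle\pi^*\delta_{0,I}|_{X^{ss}}\rangle$, hence $\mathrm{Pic}(Z)_{\QQ}=\overline{\pi}^*\mathrm{Pic}(\overline{M}_{0,n})_{\QQ}$, and the proposition follows.

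The delicate step is the treatment of the tautological classes: the boundary divisors are pinned down completely by Lemma \ref{lem-unstabledivisor}, but handling the $\psi$- and incidence-type classes requires the explicit linear equivalences on $\overline{M}_{0,n}(\PP^d,d)$ relating them to the boundary, together with the verification that, once one passes to the locus of curves that are unions of rational normal curves, no genuinely new class survives. A related point worth spelling out is that the unstable locus may have divisorial components beyond the ``excess'' boundary divisors --- most notably the locus of maps with degenerate image, which is unstable by Proposition \ref{prop-degcurveunstable} --- but these only help, since they too lie in the kernel of restriction to $X^{ss}$; the classification of semistable curves in Corollary \ref{cor-geomofstablecurves} is what ensures there are no further surviving generators.
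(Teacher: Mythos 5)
Your proposal follows essentially the same strategy as the paper --- reduce to a $\QQ$-Picard computation on the quotient via Lemma \ref{lem-PicQuot}, kill all but one of the boundary divisors $D_{i,I}$ over each $I$ using Lemma \ref{lem-unstabledivisor}, and throw in the unstable divisor $D_{deg}$ of degenerate maps --- but the two executions differ in instructive ways. For the endgame you argue ``relative Picard number zero $\Rightarrow$ no contracted curves $\Rightarrow$ finite $\Rightarrow$ isomorphism by Zariski's main theorem,'' while the paper quotes \cite[Lemma 2.62]{KM98} for a birational morphism of normal $\QQ$-factorial projective varieties with equal Picard numbers; these are interchangeable (the paper does verify the needed $\QQ$-factoriality via finite quotient singularities, a point you should state rather than assume). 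The more substantive difference is in the Picard step: the paper runs it as a pure rank count, using Pandharipande's formula $\rho(\overline{M}_{0,n}(\PP^d,d))=(d+1)2^{n-1}-{n\choose 2}$ together with the numerical independence of $d\cdot 2^{n-1}+1$ unstable divisors to get $\rho(X^{s})\le 2^{n-1}-{n\choose 2}-1=\rho(\overline{M}_{0,n})$, with the reverse inequality free from birationality. This sidesteps the ``delicate step'' you flag. Indeed, as literally stated your claim that every $\psi$- and incidence class is a $\QQ$-combination of boundary divisors on $\overline{M}_{0,n}(\PP^d,d)$ is not right: the class $\mathcal{H}$ is in general independent of the boundary there. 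What rescues your version is exactly the observation you relegate to a remark, namely that $D_{deg}$ is unstable and its class (via the relation from \cite{CHS} that the paper cites) involves $\mathcal{H}$ with nonzero coefficient, so that $\mathcal{H}$, and hence the $ev_i^*H$ and $\psi_i$ via Pandharipande's linear equivalences, land in the boundary span only \emph{after} restriction to $X^{ss}$. With that repair your generator-based argument closes and even yields the slightly stronger statement that $\mathrm{Pic}$ of the quotient is spanned by the images of the $\delta_{0,I}$, but the paper's rank count is the more economical route and avoids needing the explicit tautological relations at all.
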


\begin{proof}
For $d = 1$, this is exactly \cite[Theorem 3.4]{HK}, since $\overline{M}_{0,n}(\PP^{1},1) \cong \PP^{1}[n]$, the Fulton-MacPherson space of $\PP^{1}$. We prove for $d \ge 2$ cases. 

The space $\overline{M}_{0,n} ( \PP^d , d)$ is a normal variety with finite quotient singularities only \cite[Theorem 2]{FPnotes}. Since $\overline{\pi}$ is a birational morphism between two projective varieties, it is projective. Thus there is a $\overline{\pi}$-ample line bundle $A$. Since $\overline{\pi}$ is a birational morphism between two normal varieties,
\[
	\overline{\pi}^{*}: \mathrm{N}^{1}(\overline{M}_{0,n})_{\QQ} \to
	\mathrm{N}^{1}(\overline{M}_{0,n}(\PP^{d}, d)\gquot_{L'}
	\SL(d+1))_{\QQ}
\]
is injective. If $\overline{\pi}$ is not an isomorphism, then there is a curve $C$ that is contracted by $\overline{\pi}$.  Note that $C \cdot A > 0$.  This implies that $\overline{\pi}^{*}$ is not surjective, so to show that $\overline{\pi}$ is an isomorphism it suffices to show that the Picard numbers of both varieties are the same.

By \cite{Kee92}, the Picard number of $\overline{M}_{0,n}$ is $2^{n-1} - {n \choose 2} - 1$.  By \cite[Theorem 2]{Pan99}), the Picard number of $\overline{M}_{0,n} ( \PP^d , d)$, for $d \ge 2$, is $(d+1)2^{n-1} - {n \choose 2}$.  Therefore, it suffices to show that there are $d \cdot 2^{n-1} + 1$ numerically independent unstable divisors.

Take a partition $I \sqcup I^{c}$ of $[n]$. Among $D_{0, I}, D_{1, I}, \ldots, D_{d, I}$, there are at least $d$ unstable divisors by Lemma \ref{lem-unstabledivisor}.  It follows from \cite[Lemma 1.2.3]{Pan99} that these are all numerically independent.  Since degenerate curves in $U_{d, n}$ are unstable by Proposition \ref{prop-degcurveunstable}, their inverse image $D_{deg}$ is unstable, too.  One checks that this divisor is independent of the preceding divisors either by explicitly constructing a curve in $\overline{M}_{0,n} ( \PP^d , d)$ or by using the formula for $D_{\deg}$ in the $n=0$ case in \cite[Lemma 2.1]{CHS} and pulling back to $\overline{M}_{0,n} ( \PP^d , d)$.

Combining this with Lemma \ref{lem-PicQuot}, and writing $\rho$ for the Picard number, we obtain
\begin{eqnarray*}
	\rho(\overline{M}_{0,n} ( \PP^d , d) \gquot_{L'} \SL(d+1)) &=&
	\rho(\overline{M}_{0,n} ( \PP^d , d)^{s}) \\
	&\le& (d+1)2^{n-1}-{n \choose 2} - d2^{n-1}-1\\
	&=& 2^{n-1}-{n \choose 2} - 1=\rho(\overline{M}_{0,n})
\end{eqnarray*}
The opposite inequality holds due to the existence of the birational morphism $\overline{\pi}$.  This completes the proof.
\end{proof}

From the idea of the proof of Proposition \ref{thm-isomorphism}, we can obtain a proof of the stability result in Proposition \ref{prop:stabilitydescription}.

\begin{proof}[Proof of Proposition \ref{prop:stabilitydescription}]
Suppose that $(X, p_{1}, \cdots, p_{n}) \in U_{d,n}^{ss}(L)$.  Then $X \subset \PP^{d}$ is non-degenerate by Proposition \ref{prop-degcurveunstable}. For any point $p \in X$ of multiplicity $m$, $\sum_{p_{i}=p}c_{i} < 1 - (m-1)\gamma$ by Proposition \ref{SingularWeight}.  Also, for any tail $Y \subset X$, $\deg(Y) = \sigma(Y)$ by Proposition \ref{DegreeOfTails}.

Conversely, let $(X, p_{1}, \cdots, p_{n}) \in U_{d,n}$ be a pointed curve satisfying the assumptions above.  Let $(f : (C, x_{1}, \cdots, x_{n}) \to \PP^{d}) \in \overline{M}_{0,n}(\PP^{d}, d)$ be a stable map such that $\phi(f) = (X, p_{1}, \cdots, p_{n})$ where $\phi : \overline{M}_{0,n}(\PP^{d}, d) \to U_{d,n}$ be the cycle map. For an irreducible component $D \subset C$, if $f(D) \subset X$ is not a point, we claim that $D$ has at least three special points (singular points and marked points). Indeed, if $Y = f(D)$ is a tail, then $\sigma(f(D)) \ge 1$ or equivalently, $\sum_{p_{i} \in f(D)^{sm}}c_{i} > 1$ because $\sum_{p_{i} \in f(D)}c_{i} > 2- \gamma$ and on the unique singular point $p$ of $f(D)$, $\sum_{p_{i} = p}c_{i} < 1 - \gamma$ by Proposition \ref{SingularWeight}. Since the sum of the weights at a smooth point is at most one, there must be at least two marked points on $f(D)^{sm}$.  Similarly, if $f(D)$ is a bridge, $f(D)$ can be regarded as a complement of two (possibly reducible) tails $E_{1}$ and $E_{2}$. If there is no marked points on $f(D)^{sm}$, then
\[
	\sigma(E_{1}) + \sigma(E_{2}) = \sigma(\{p_{i}\in E_{1}\})
	+ \sigma(\{p_{i}\in E_{2}\}) = d
\]
by Lemma \ref{Additivity},  thus $f(D)$ must be a point. It follows that a bridge $f(D)$ must have a marked point on $f(D)^{sm}$.  In the remaining cases, $f(D)$ has at least three singular points.

If $f(D)$ is a point, there exist at least three special points since $f$ is a stable map. Thus the domain $(C, x_{1}, \cdots, x_{n})$ is already an $n$-pointed stable rational curve. So $\pi(f) = (C, x_{1}, \cdots, x_{n})$ for $\pi : \overline{M}_{0,n}(\PP^{d}, d) \to \overline{M}_{0,n}$.

Since $\pi : \overline{M}_{0,n}(\PP^{d}, d)^{ss}(L') \to \overline{M}_{0,n}$ is surjective, there exists
\[
	(\tilde{f} : (C', x_{1}', \cdots, x_{n}') \to \PP^{d}) \in
	\pi^{-1}(C, x_{1}, \cdots, x_{n}) \cap
	\overline{M}_{0,n}(\PP^{d}, d)^{ss}(L').
\]
We claim that $C' \cong C$ and $\tilde{f} \cong f$ up to projective equivalence. If $C' \not\cong C$, then there exists a nontrivial contraction $c : C' \to C$ and a contracted irreducible component $D' \subset C'$ which has at most two special points. Note that for every (possibly reducible) tail $D \subset C'$ we can determine $\deg \tilde{f}(D)$ by Lemma \ref{lem-unstabledivisor} and it must be equal to $\sigma(D) = \sigma(c(D)) = \deg f(c(D))$. In particular, the sum of degrees of $\tilde{f}$ on the non-contracted irreducible components is already $d$ and $\deg \tilde{f}(D') = 0$. This is impossible since $\tilde{f}$ is a stable map so a degree zero component must have at least three special points.
The projective equivalence of $\tilde{f}$ and $f$ can be shown by induction on the number of irreducible components, since for each irreducible component $D \subset C$, $f(D)$ is a rational normal curve in its span and there is a unique rational normal curve up to projective equivalence.

Therefore, $f$ is in the $\SL(d+1)$-orbit of $\tilde{f}$.  Hence $f \in \overline{M}_{0,n}(\PP^{d}, d)^{ss}(L')$.  From $\phi^{-1}(U_{d,n}^{ss}(L)) = \overline{M}_{0,n}(\PP^{d},d)^{ss}(L')$ (Lemma \ref{lem-stabilitybirational}), we have $\phi(f) = (X, p_{1}, \cdots, p_{n}) \in U_{d,n}^{ss}(L)$.
\end{proof}

\begin{remark}\label{rem:nostabilizing}
This proof tells us that if $L \in \Delta^{0}$ is a linearization admitting no strictly semistable points, then for the forgetting map
\[
	\pi : \overline{M}_{0,n}(\PP^{d}, d)^{ss}(L') \to \overline{M}_{0,n}
\]
restricted to the semistable locus, there is no contraction on the domain curve.
\end{remark}

\subsection{Relation to Hassett's spaces}

We prove here that the morphism constructed above factors through a Hassett moduli space of weighted pointed curves.  First observe that for any linearization $(\gamma,\vec{c})\in\Delta^\circ$, the vector $\vec{c}$ defines a Hassett space $\overline{M}_{0,\vec{c}}$.

\begin{proposition}\label{prop:Hassett}
For any $(\gamma,\vec{c})\in\Delta^\circ$, there is a commutative triangle:
$$\xymatrix{ \overline{M}_{0,n} \ar[rr]^{\phi~} \ar[dr] && U_{d,n} \gquot_{(\gamma,\vec{c})} \SL(d+1) \\ & \overline{M}_{0,\vec{c}} \ar[ur] & } $$
\end{proposition}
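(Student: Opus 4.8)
The diagonal map should be Hassett's reduction morphism $\rho\colon\overline{M}_{0,n}\to\overline{M}_{0,\vec c}$, which exists because $(\gamma,\vec c)\in\Delta^\circ$ forces $c_i<1$ for all $i$, so $\vec c$ is obtained from the weight datum $(1,\ldots,1)$ of $\overline{M}_{0,n}$ by lowering weights \cite{Has03}. This $\rho$ is a birational morphism of projective varieties onto a normal variety, so (Stein factorization plus Zariski's main theorem) $\rho_\ast\OO_{\overline{M}_{0,n}}=\OO_{\overline{M}_{0,\vec c}}$. By the rigidity lemma it therefore suffices to show that $\phi$ is constant on every fibre of $\rho$: the induced set map $\psi\colon\overline{M}_{0,\vec c}\to U_{d,n}\gquot_{(\gamma,\vec c)}\SL(d+1)$ is then automatically a morphism, and $\psi\circ\rho=\phi$ is the asserted commutative triangle. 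So the entire content is the fibrewise constancy.

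\textbf{The generic case.} First I would treat a linearization off all the GIT walls, using the description of $\phi$ from \S\ref{section:Morphism}: a point $[(C,p_1,\ldots,p_n)]\in\overline{M}_{0,n}\cong\overline{M}_{0,n}(\PP^d,d)\gquot_{L'}\SL(d+1)$ is represented by a GIT-stable stable map $f\colon\widetilde C\to\PP^d$ whose stabilization is $(C,p_i)$, and $\phi[(C,p_i)]=[(f_\ast[\widetilde C],f(p_1),\ldots,f(p_n))]$. By Corollary \ref{cor-geomofstablecurves} and Proposition \ref{DegreeOfTails}, every tail $E\subset\widetilde C$ maps with degree $\sigma(E)$, and $\sigma(E)=0$ exactly when $c_E<1$, which for a generic linearization coincides with the condition $c_E\le 1$ that $E$ be contracted by $\rho$ (Hassett's stability criterion). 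Thus the tails collapsed by $f$ to a point of $\PP^d$ are precisely those collapsed by $\rho$. Now a fibre of $\rho$ over a point of $\overline{M}_{0,\vec c}$ is, by Hassett's description of the reduction morphism \cite{Has03}, a product of spaces $\overline{M}_{0,k_v+1}$ recording the internal configurations of the marked points supported on these $\rho$-contracted tails; as one moves in such a fibre the positive-degree part of the curve, hence the cycle $f_\ast[\widetilde C]$, is unchanged, and each marked point lying on a contracted tail maps to $f$ of that tail's attaching node, which is likewise unchanged, while the remaining marked points are unaffected. Hence $(f_\ast[\widetilde C],f(p_1),\ldots,f(p_n))$ is literally constant along the fibre, so $\phi$ is, and the generic case follows from the rigidity lemma.

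\textbf{Non-generic linearizations and the main obstacle.} For an arbitrary $(\gamma,\vec c)\in\Delta^\circ$ I would pick a nearby generic $(\gamma_\varepsilon,\vec c_\varepsilon)\in\Delta^\circ$ with $c_{\varepsilon,i}\le c_i$, so that $\rho$ factors as $\overline{M}_{0,n}\xrightarrow{\rho_\varepsilon}\overline{M}_{0,\vec c_\varepsilon}\to\overline{M}_{0,\vec c}$ (a composite of reduction morphisms) and $\phi_{(\gamma,\vec c)}$ is the composite of $\phi_{(\gamma_\varepsilon,\vec c_\varepsilon)}$ with the VGIT contraction $U_{d,n}\gquot_{(\gamma_\varepsilon,\vec c_\varepsilon)}\SL(d+1)\to U_{d,n}\gquot_{(\gamma,\vec c)}\SL(d+1)$; granting the generic case, $\phi_{(\gamma_\varepsilon,\vec c_\varepsilon)}$ factors through $\rho_\varepsilon$, and one more application of the rigidity lemma to $\overline{M}_{0,\vec c_\varepsilon}\to\overline{M}_{0,\vec c}$ finishes the general case. (Alternatively one avoids the perturbation by checking that $\phi$ already factors through the moduli space of $\vec c$-weighted stable maps to $\PP^d$, whose GIT quotient is $\overline{M}_{0,\vec c}$ by the Picard-number argument of Proposition \ref{thm-isomorphism}.) I expect the main obstacle to be the fibrewise-constancy statement itself — pairing Hassett's combinatorial description of the fibres of the reduction morphism with the degree formula of Proposition \ref{DegreeOfTails}, and in particular handling marked points sitting at attaching nodes and tails that are not irreducible; the wall linearizations are a secondary nuisance handled as above.
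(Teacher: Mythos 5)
Your proposal is correct in outline but reaches the factorization by a genuinely different mechanism than the paper. The paper's proof is essentially one line modulo a quoted result: by Alexeev's lemma (cf.\ \cite[Lemma 4.6]{Fak09}), a morphism out of $\overline{M}_{0,n}$ factors through the reduction $\rho\colon\overline{M}_{0,n}\to\overline{M}_{0,\vec c}$ as soon as it contracts every F-curve that $\rho$ contracts; these are exactly the F-curves with a leg of weight $\ge c-1$, and Proposition \ref{DegreeOfTails} forces such a leg to have degree $d$, hence the spine to have degree $0$, so $\phi$ contracts them. You instead invoke the rigidity lemma directly and verify constancy of $\phi$ on every fiber of $\rho$ by matching the $\rho$-contracted trees with the $\sigma=0$ (hence $\phi$-contracted) tails. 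This works --- for a generic linearization $c_E\le 1$ and $\sigma(E)=0$ do coincide, and in fact only the implication ``$\rho$-contracted $\Rightarrow$ degree $0$'' is needed for constancy --- but it transfers the content of Alexeev's lemma (that the relative cone of curves of $\rho$ is spanned by F-curves) into an explicit and somewhat delicate analysis of reduction fibers over arbitrary boundary strata, where several trees are collapsed at several attaching points at once. The paper's route buys brevity and avoids discussing fibers entirely; yours is more self-contained and makes visible exactly which components are collapsed. Both handle wall linearizations by perturbation and separatedness.

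One correction in your non-generic step: with $c_{\varepsilon,i}\le c_i$ the Hassett reduction morphism goes $\overline{M}_{0,\vec c}\to\overline{M}_{0,\vec c_\varepsilon}$, not the other way, so $\rho$ does not factor through $\rho_\varepsilon$ as you wrote. The sign actually works in your favor: with $c_\varepsilon\le c$ one has $\rho_\varepsilon=(\text{reduction})\circ\rho$, hence $\phi_\varepsilon=\psi_\varepsilon\circ\rho_\varepsilon$ already factors through $\rho$, and composing with the VGIT morphism to the wall quotient factors $\phi$ through $\rho$ with no second application of rigidity --- this is precisely the chain the paper writes down. If you instead insist on $c_{\varepsilon,i}\ge c_i$ so that $\rho$ factors through $\rho_\varepsilon$, you would need the extra rigidity step you describe, together with a separate check that the VGIT contraction kills everything the further reduction $\overline{M}_{0,\vec c_\varepsilon}\to\overline{M}_{0,\vec c}$ kills.
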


\begin{proof}
Recall that an F-curve is an irreducible component $\overline{M}_{0,4} \hookrightarrow \overline{M}_{0,n}$ of a boundary 1-stratum, and it parameterizes a $\mathbb{P}^1$ with four ``legs'' attached; the curve is traced out by varying the cross-ratio of these attaching points.  By a result of Alexeev (cf. \cite[Lemma 4.6]{Fak09}), it is enough to show that every F-curve  contracted by the map $\overline{M}_{0,n} \rightarrow \overline{M}_{0, \vec{c}}$ is also contracted by $\phi$.  The F-curves contracted by this Hassett morphism are precisely those for which one of the tails carries $\ge c-1$ weight of marked points.  By Proposition \ref{DegreeOfTails}, for a generic linearization these F-curves are also contracted by $\phi$ because their leg carrying the most weight must have degree $d$, leaving degree zero for the component with the four attaching points.  If the linearization is not generic, then we can obtain the result by perturbing the linearization slightly: \[\overline{M}_{0,n} \rightarrow \overline{M}_{0,\vec{c}} \rightarrow \overline{M}_{0,\vec{c}-\epsilon} \rightarrow U_{d,n}\gquot_{\gamma',\vec{c}-\epsilon}\SL(d+1) \rightarrow U_{d,n}\gquot_{\gamma,\vec{c}}\SL(d+1).\]  Everything is separated and the interior $M_{0,n}$ is preserved, so this composition coincides with $\phi$.
\end{proof}

\section{Modular interpretation of chambers}\label{section:Smyth}

In the absence of strictly semistable points, i.e., for linearizations in open GIT chambers, the GIT quotients $U_{d,n} \gquot \SL(d+1)$ are fine moduli spaces of pointed rational curves.  In this section we describe explicitly the functors they represent. One approach is to describe each quotient as a moduli space of polarized pointed rational curves, as in \S\ref{ssec:moduliofpolarizedcurves}. Another useful framework for accomplishing this is provided by Smyth's notion of a \emph{modular compactification} \cite{Smy09}, cf. \S\ref{sec:modcomp}.

\subsection{GIT quotient as a moduli space of polarized curves}
\label{ssec:moduliofpolarizedcurves}

In this section, we provide a description of the GIT quotient as a moduli space of abstract genus 0 polarized curves with marked points.  This is accomplished in Theorem \ref{thm:moduli} below.  Fix $d > 0$, and let $L = (\gamma, \vec{c}) \in \Delta^{0}$ be a general linearization.

\begin{definition}\label{def:dpolarizedcurves}
Let $B$ be a noetherian scheme. A family of \textbf{$(\gamma, \vec{c})$-stable $d$-polarized curves} over $B$ consists of
\begin{itemize}
	\item a flat proper morphism $\pi : X \to B$ whose geometric fibers are
	reduced projective arithmetic genus zero curves;
	\item $n$ sections $s_{1}, \cdots, s_{n}: B \to X$;
	\item a $\pi$-ample line bundle $L$ on $X$ of degree $d$
\end{itemize}
satisfying the following numerical properties:
\begin{itemize}
	\item for $b \in B$ and a point $p \in X_{b}$ of multiplicity $m$,
	\[
		\sum_{s_{i}(b) = p}c_{i} < 1 - (m-1)\gamma;
	\]
	\item for each (possibly reducible) tail $C \subset X_{b}$,
	$\deg L|_{C} = \sigma(C)$.
\end{itemize}
\end{definition}
Here $\sigma$ is the weight function from \S\ref{section:WeightFcn}. Note that the last numerical condition is sufficient to decide the degrees of all irreducible components. 

Two families $(\pi_{1}:X_{1} \to B, \{s_{i}\}, L_{1})$, $(\pi_{2}:X_{2} \to B, \{t_{i}\}, L_{2})$ are isomorphic if there exists a $B$-isomorphism $\phi : X_{1} \to X_{2}$ such that $s_{i}\circ \phi = t_{i}$ and $\phi^{*}L_{2} \cong L_{1}\otimes \pi_{1}^{*}M$ for some line bundle $M$ over $B$.
Note that if $L$ is $\pi$-ample, then $L$ is very ample over any geometric fiber because of the genus condition. Also it is straightforward to check that $h^{0}(X_{b}, L_{b}) =  d+1$.

With a natural pull-back over base schemes, the category of families of $(\gamma, \vec{c})$-stable $d$-polarized curves forms a fibered category over the category of locally noetherian schemes.

\begin{theorem}\label{thm:moduli}
Let $\mathcal{M}_{\gamma, \vec{c}}$ be the fibered category of families of $(\gamma, \vec{c})$-stable $d$-polarized rational curves. Then $\mathcal{M}_{\gamma, \vec{c}}$ is a Deligne-Mumford stack. Moreover, it is represented by $U_{d,n}\gquot_{\gamma, \vec{c}}\SL(d+1)$.
\end{theorem}

\begin{proof}
The proof relies on standard arguments in moduli theory, so we only outline it here.

First of all, for a family of $(\gamma, \vec{c})$-stable $d$-polarized curves $\pi : X \to B$, one can show that $H^{1}(X_{b}, L_{X_{b}}) = 0$ for all geometric fibers by a straightforward induction on the number of irreducible components. Thus by \cite[Theorem III.12.11]{Har77}, $\pi_{*}L$ is locally free of rank $d+1$. By Grothendieck's descent theory, families of $(\gamma, \vec{c})$-stable $d$-polarized curves descend effectively and $\underline{\mathrm{Isom}}$ is a sheaf. Therefore $\mathcal{M}_{\gamma, \vec{c}}$ is a stack \cite[Definition 3.1]{LMB00}.

Let $\mathrm{Hilb} (1,d, \PP^{d})$ be the irreducible component of the Hilbert scheme containing rational normal curves.  Let \[HC : \mathrm{Hilb} (1,d, \PP^{d}) \to \mathrm{Chow}(1, d, \PP^{d})\] be the restricted Hilbert-Chow morphism, and let $H_{0} \subset \mathrm{Hilb}(1,d,\PP^{d})$ be the open subset parameterizing reduced non-degenerate curves.  Then the restriction $HC : H_{0} \to C_{0} := HC(H_{0})$ is injective. Moreover, there is an inverse $C_{0} \to H_{0}$, because the Hilbert polynomial of fibers of the family over $C_{0}$ is constant, so the family of algebraic cycles over $C_{0}$ is flat over $C_{0}$. Therefore $H_{0} \cong C_{0}$.

Let $U \subset \mathrm{Hilb} (1,d, \PP^{d}) \times (\PP^{d})^{n}$ be the locally closed subscheme parametrizing tuples $(X, p_1, \cdots, p_n)$ satisfying
\begin{itemize}
	\item $X \subset \PP^{d}$ is reduced, nodal and arithmetic genus zero;
	\item $p_i \in X$;
	\item $(X, \{p_i \}, \mathcal{O}_{X}(1))$ is a $(\gamma, \vec{c})$-
	stable $d$-polarized curve.
\end{itemize}
Note that for any linearization $L \in \Delta^{0}$, $U_{d, n}^{ss}(L) \subset C_{0} \times (\PP^{d})^{n}$. Also by Proposition \ref{prop:stabilitydescription}, $U \cong U_{d,n}^{ss}(L)$ within the identification $H_{0} \cong C_{0}$.

Any $(\gamma, \vec{c})$-stable $d$-polarized curve $(X, \{p_{i}\}, L)$ is represented by a point in $U$, because $L$ is very ample. Also by Proposition \ref{NoAutomorphisms}, an isomorphism between polarized curves is induced only by $\mathrm{Aut}(\PP^{d}) \cong \mathrm{PGL}(d+1)$. Therefore the map $U \to \mathcal{M}_{\gamma, \vec{c}}$ is a principal $\mathrm{PGL}(d+1)$-bundle. In particular, it is representable and faithfully flat. Moreover, the diagonal $\mathcal{M}_{\gamma, \vec{c}} \to \mathcal{M}_{\gamma, \vec{c}} \times \mathcal{M}_{\gamma, \vec{c}}$ is representable, separated and quasi-compact. By Artin's criterion (\cite[Theorem 10.1]{LMB00}), $\mathcal{M}_{\gamma, \vec{c}}$ is an algebraic stack. Moreover, since the objects have no non-trivial automorphisms, it is an algebraic space and isomorphic to its coarse moduli space.

Finally, from the above construction and the non-existence of nontrivial automorphisms,
\[
	\mathcal{M}_{\gamma, \vec{c}} \cong [U/\mathrm{PGL}(d+1)]
	\cong U/\mathrm{PGL}(d+1) \cong U_{d,n}\gquot_{L}\SL(d+1),
\]
as claimed.
\end{proof}

\subsection{Modular Compactifications}\label{sec:modcomp}

We briefly recall here the relevant results from \cite{Smy09}.  A modular compactification is defined to be an open substack of the stack of all curves that is proper over $\Spec \mathbb{Z}$ \cite[Definition 1.1]{Smy09}.  A main result of Smyth is that in genus zero these are classified by certain combinatorial gadgets.

\begin{definition}\cite[Definition 1.5]{Smy09}\label{def:extremal}
Let $\mathcal{G}$ be the set of isomorphism classes of dual graphs of strata in $\overline{M}_{0,n}$.  An \textbf{extremal assignment} $\mathcal{Z}$ is a proper (though possibly empty) subset of vertices $\mathcal{Z}(G)\subsetneq G$ for each $G\in\mathcal{G}$ such that if $G \leadsto G'$ is a specialization inducing $v \leadsto v'_1 \cup \cdots \cup v'_k$, then $v \in \mathcal{Z}(G) \Leftrightarrow v'_1,\ldots,v'_k \in \mathcal{Z}(G')$.
\end{definition}

Smyth states an additional axiom that for any $G\in\mathcal{G}$, the set $\mathcal{Z}(G)$ is invariant under $\mathrm{Aut}(G)$, but in genus zero there are no nontrivial automorphisms since $G$ is a tree with marked points on all the leaves.

\begin{definition}\cite[Definition 1.8]{Smy09}
Let $\mathcal{Z}$ be an extremal assignment.  A reduced marked curve $(X, p_1 , \ldots , p_n )$ is \textbf{$\mathcal{Z}$-stable} if there exists $(X^s , p_1^s , \ldots , p_n^s )\in\overline{M}_{0,n}$ and a surjective morphism $\pi : X^s \twoheadrightarrow X$, $\pi(p_i^s)=p_i$, with connected fibers such that:
\begin{enumerate}
\item  $\pi$ maps $X^s \backslash \mathcal{Z}( X^s )$ isomorphically onto its image, and
\item  if $X_1 , \ldots X_k$ are the irreducible components of $\mathcal{Z}( X^s )$, then $\pi ( X_i )$ is a multinodal singularity of multiplicity $\vert X_i \cap X_i^c \vert$.
\end{enumerate}
\end{definition}

The beautiful culmination of Smyth's story, in genus zero, is the following result:

\begin{theorem}[\cite{Smy09}]\label{thm:Smyth}
For any extremal assignment $\mathcal{Z}$, the stack $\overline{M}_{0,n} (\mathcal{Z})$ of $\mathcal{Z}$-stable curves is an algebraic space and a modular compactification of $M_{0,n}$.  There is a morphism $\overline{M}_{0,n} \rightarrow \overline{M}_{0,n}(\mathcal{Z})$ contracting the assigned components of each DM-stable curve.  Every modular compactification is of the form $\overline{M}_{0,n}(\mathcal{Z})$ for an extremal assignment $\mathcal{Z}$.
\end{theorem}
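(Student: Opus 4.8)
The statement is Smyth's genus-zero classification \cite{Smy09}, so I will only sketch the architecture of the argument rather than reproduce it. There are three assertions: (a) for each extremal assignment $\mathcal{Z}$ the stack $\overline{M}_{0,n}(\mathcal{Z})$ of $\mathcal{Z}$-stable curves is an algebraic space, proper over $\Spec\mathbb{Z}$, hence a modular compactification; (b) there is a contraction morphism $\overline{M}_{0,n}\to\overline{M}_{0,n}(\mathcal{Z})$; and (c) conversely, every modular compactification of $M_{0,n}$ is of this form. The plan is to first build the contraction morphism in (b) directly from the universal curve over $\overline{M}_{0,n}$, use it to deduce nonemptiness and properness in (a), and then invert this construction to obtain the classification in (c).

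For (b), I would take the universal stable curve $\mathcal{C}\to\overline{M}_{0,n}$ and contract, fiberwise, the components selected by $\mathcal{Z}$. The compatibility axiom in Definition \ref{def:extremal} is precisely what is needed to guarantee that these fiberwise contractions glue into a single flat family $\mathcal{C}\to\mathcal{C}^{\mathcal{Z}}\to\overline{M}_{0,n}$ over the whole base: along a specialization $G\leadsto G'$, a contracted component stays contracted and an uncontracted one stays uncontracted, so the contraction locus varies in families. One checks that $\mathcal{C}^{\mathcal{Z}}\to\overline{M}_{0,n}$, together with the images of the tautological sections, is a family of $\mathcal{Z}$-stable curves in the sense of Definition~1.8 of \cite{Smy09}, and hence induces the desired morphism; since it restricts to the identity on the common dense open $M_{0,n}$, it is birational. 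For (a), openness of $\mathcal{Z}$-stability in flat families of reduced curves follows because the defining data---a stabilization $\pi\colon X^{s}\twoheadrightarrow X$ with the prescribed contraction behaviour and singularity type---is constructible and stable under generization. Properness is verified with the valuative criterion: given a family of $\mathcal{Z}$-stable curves over a punctured trait, one first lifts to a family of DM-stable curves after a finite base change, using properness of $\overline{M}_{0,n}$, and then applies the fiberwise $\mathcal{Z}$-contraction of (b) to the special fiber to obtain a limit; separatedness reduces to uniqueness of this limit, which holds because $\mathcal{Z}(G)$ is a \emph{proper} subset of vertices, so the $\mathcal{Z}$-stable limit retains enough of the dual graph to be rigid. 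Finally, $\overline{M}_{0,n}(\mathcal{Z})$ is an algebraic space rather than a nontrivial stack because a genus-zero $\mathcal{Z}$-stable curve has no infinitesimal automorphisms: its stabilization is a tree with all marked points on leaves and hence automorphism-free, and contracting components only destroys automorphisms.

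For (c), given a modular compactification $\mathcal{X}$, the open immersion $M_{0,n}\hookrightarrow\mathcal{X}$ extends to a morphism $\overline{M}_{0,n}\to\mathcal{X}$: the rational map $\overline{M}_{0,n}\dashrightarrow\mathcal{X}$ is defined in codimension one after normalizing, and properness and separatedness of $\mathcal{X}$ allow one to fill in the remaining locus. For each dual graph $G$ one then defines $\mathcal{Z}(G)$ to be the set of vertices whose component is contracted to a point by this morphism at the generic point of the corresponding stratum; compatibility with specializations ($v\in\mathcal{Z}(G)\Leftrightarrow v_{1}',\dots,v_{k}'\in\mathcal{Z}(G')$) is inherited from the fact that the morphism is globally defined over $\overline{M}_{0,n}$, and $\mathcal{Z}(G)$ is a proper subset because $\mathcal{X}$ still parameterizes curves of arithmetic genus zero, so at least one component of every fiber must survive. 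One concludes by checking that the tautological comparison map $\mathcal{X}\to\overline{M}_{0,n}(\mathcal{Z})$ is an isomorphism, comparing the two moduli functors: both parameterize exactly the curves obtained from a DM-stable curve by contracting its $\mathcal{Z}$-assigned components.

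The main obstacle is the properness---and in particular the \emph{separatedness}---of $\overline{M}_{0,n}(\mathcal{Z})$ in part (a): one must analyze the local deformation theory of $\mathcal{Z}$-stable curves near their multinodal singularities and verify that, for every one-parameter degeneration, the $\mathcal{Z}$-contraction of the DM-stable limit is the \emph{unique} $\mathcal{Z}$-stable limit. Bookkeeping of exactly which components the assignment selects, and of how the selected sets for adjacent strata are related, is where the real work lies; the rest of the argument is formal once the contraction morphism of (b) is in hand.
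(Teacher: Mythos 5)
This statement is not proved in the paper at all: it is Smyth's genus-zero classification, imported verbatim from \cite{Smy09} and used as a black box (the authors only prove later that their specific assignments $\mathcal{Z}_{\gamma,\vec{c}}$ satisfy Definition \ref{def:extremal}). So there is no internal proof to compare yours against; the honest answer to ``reprove this'' in the context of this paper is simply to cite \cite{Smy09}.

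That said, your architectural sketch is a fair outline of what Smyth actually does, and you correctly locate the hard points (separatedness via uniqueness of the $\mathcal{Z}$-contracted limit, and the absence of infinitesimal automorphisms making the stack an algebraic space). Two places where the sketch is thinner than the real argument deserve flagging. First, in (b) the existence of the contraction $\mathcal{C}\to\mathcal{C}^{\mathcal{Z}}$ \emph{in families} is not automatic from the compatibility axiom: one must actually construct it, e.g.\ as $\mathrm{Proj}$ of a relative section ring of a suitable line bundle trivial on the assigned components, and verify finite generation and flatness of the result; this is a genuine theorem, not bookkeeping. Second, in (c) the extension of $M_{0,n}\hookrightarrow\mathcal{X}$ to a morphism from $\overline{M}_{0,n}$ does not follow merely from ``defined in codimension one plus properness'' (properness of the target eliminates indeterminacy only after blowing up the source); Smyth instead uses that $\mathcal{X}$ is by definition an \emph{open} substack of the stack of smoothable curves, so every boundary point of $\mathcal{X}$ is a limit of smooth curves and can be compared with the DM-stable limit over a trait, which is how the assignment $\mathcal{Z}(G)$ is read off. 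With those caveats your outline matches the cited proof; for the purposes of this paper the citation alone suffices.
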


\subsection{Extremal assignments from GIT}

For GIT situations such that there are no strictly semistable points, the corresponding quotient is not only a categorical quotient of the semistable locus but in fact a geometric quotient \cite{git}. In the present situation, it is not hard to see that in such cases the quotient $U_{d,n}\gquot\SL(d+1)$ is a modular compactification of $M_{0,n}$ in the sense of \cite{Smy09}.  In particular, for each linearization $(\gamma,\vec{c})$ in an open GIT chamber, there is a corresponding extremal assignment.  We define here an extremal assignment $\mathcal{Z}_{\gamma , \vec{c}}$ and then show below that it is in fact the extremal assignment associated to the corresponding GIT quotient.

\begin{definition}
Let $E\subset X$ be an irreducible component of a DM-stable curve.  Set $E \in \mathcal{Z}_{\gamma , \vec{c}}(X)$ if and only if $\sum \sigma (Y) = d$,
where the sum is over all connected components $Y$ of $\overline{X \backslash E}$.
\end{definition}

\begin{proposition}\label{prop:GITass}
Let $( \gamma , \vec{c} ) \in \Delta^\circ$ be a linearization admitting no strictly semistable points.  Then $\mathcal{Z}_{\gamma , \vec{c}}$ is an extremal assignment.
\end{proposition}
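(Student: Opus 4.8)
The plan is to verify directly that $\mathcal{Z}_{\gamma,\vec{c}}$ satisfies the two requirements in Definition \ref{def:extremal}: (i) that $\mathcal{Z}_{\gamma,\vec{c}}(X)$ is a proper subset of the vertices of the dual graph for every DM-stable curve $X$, and (ii) that the assignment is compatible with specializations. The function $\sigma$ and its additivity properties (Lemmas \ref{lem:sigmdef}, \ref{lem:subadd}, \ref{Additivity}) will do essentially all the work, so there is nothing deep here—it is a bookkeeping argument—but one must be careful about the non-generic linearizations where $\varphi(I,\gamma,\vec{c})\in\mathbb{Z}$ for some $I$, since the hypothesis is only that there are no strictly semistable points, which by Corollary \ref{NoSemistablePoints} is implied by genericity but, a priori, could also hold on some walls.

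First I would show properness: $\mathcal{Z}_{\gamma,\vec{c}}(X)\ne$ all of $X$. If $E$ is a \emph{leaf} component of the dual tree (a tail attached at one point), then $\overline{X\backslash E}$ is connected, so the condition ``$E\in\mathcal{Z}_{\gamma,\vec{c}}(X)$'' reads $\sigma(\overline{X\backslash E})=d$; by Lemma \ref{lem:sigmdef} this forces $c_{\overline{X\backslash E}} > c-1$, i.e. the marked points on $E$ carry total weight $< 1$. Since a DM-stable curve has at least two leaves and each carries at least one marked point, I would argue that not every leaf can satisfy this—one can order the leaves and use that the total weight is $c = d+1-(d-1)\gamma > 2$ (as $\gamma<1$), so it cannot be that every leaf's complement has weight $>c-1$. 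Actually the cleanest route is: pick any leaf $E_0$ of minimal marked-point weight; the remaining leaves then cannot all be in $\mathcal{Z}$, and since a nonempty tree has a non-leaf vertex only when it has $\ge 3$ leaves, a short case analysis on the number of leaves finishes it. Alternatively, observe that if every component were assigned, then summing and using Lemma \ref{lem:subadd}/\ref{Additivity} would contradict $\deg X = d > 0$, since Corollary \ref{DegreeOfComponents} shows an assigned component is precisely one of degree zero, and a connected curve cannot consist entirely of degree-zero components.

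The main step is the specialization compatibility. Suppose $G\leadsto G'$ is a specialization of dual graphs inducing $v\leadsto v_1'\cup\cdots\cup v_k'$, corresponding geometrically to a DM-stable curve $X'$ degenerating so that a single component $E\subset X$ is replaced by a connected chain/tree of components $E_1',\ldots,E_k'$ in $X'$, with all other components and all marked points carried along. I must show $E\in\mathcal{Z}_{\gamma,\vec{c}}(X)\iff E_1',\ldots,E_k'\in\mathcal{Z}_{\gamma,\vec{c}}(X')$. The key observation is that the connected components of $\overline{X\backslash E}$ are in natural bijection with the connected components of $\overline{X'\backslash(E_1'\cup\cdots\cup E_k')}$, carrying the same marked points, so $\sum_{Y\subset\overline{X\backslash E}}\sigma(Y) = \sum_{Y'\subset\overline{X'\backslash\bigcup E_j'}}\sigma(Y')=:s$. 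Now $E\in\mathcal{Z}(X)$ means $s=d$. On the other side, for a fixed $E_j'$, the complement $\overline{X'\backslash E_j'}$ is a disjoint union of tails, each of which is either one of the $Y'$'s above or is built from other $E_i'$'s together with some $Y'$'s; I would use Lemma \ref{lem:subadd} (superadditivity of $\sigma$ over disjoint unions) to show $\sum_{Z\subset\overline{X'\backslash E_j'}}\sigma(Z)\ge s$, with equality related to whether each $E_i'$ is itself assigned. The cleanest formulation: by Corollary \ref{DegreeOfComponents}, on any GIT-semistable curve a component is assigned iff it has degree zero, and degree is additive; so if $E$ had degree zero in $X$, then in the degeneration $X'$ the chain $E_1'\cup\cdots\cup E_k'$ must collectively have degree zero, forcing each $E_j'$ to have degree zero (degrees are nonnegative), hence each $E_j'$ assigned; conversely if each $E_j'$ has degree zero then $E$ does. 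The honest subtlety is that Corollary \ref{DegreeOfComponents} was stated for the non-integral (generic) linearizations, so to cover all no-strictly-semistable cases I would either (a) note that ``no strictly semistable points'' together with Proposition \ref{NoAutomorphisms}'s argument forces genericity on the relevant hyperplanes, or (b) argue purely combinatorially from $\sigma$ and Lemma \ref{lem:subadd} without invoking the geometric degree, checking that $s=d$ for the big complement is equivalent to the corresponding sums being $d$ for each sub-complement. I expect option (b)—a direct inequality chase with $\sigma$—to be the safest and is the step where the real care is needed; everything else is formal.
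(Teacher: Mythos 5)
Your overall skeleton is right --- verify the two axioms of Definition \ref{def:extremal}, reduce the specialization axiom via the observation that the connected components of $\overline{X\backslash E}$ and of $\overline{X'\backslash(E_1'\cup\cdots\cup E_k')}$ carry the same marked points, and use the combinatorics of $\sigma$ --- and you are right to flag that Lemma \ref{Additivity} needs non-integrality (which does hold here, since by Propositions \ref{FlatLimit} and \ref{WallCrossing} the walls carry strictly semistable points). But the crux is left undone, and it is not a routine inequality chase. Superadditivity (Lemma \ref{lem:subadd}) proves only one direction of the biconditional: if $\sum_{Y\subset\overline{X\backslash E}}\sigma(Y)=d$, then for each $E_j'$ the components of $\overline{X'\backslash E_j'}$ are unions of those $Y$'s, so their $\sigma$-sum is $\ge d$, and disjointness plus $\sigma([n])=d$ bounds it above by $d$. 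For the converse --- each $E_j'$ assigned implies $E$ assigned --- passing from the complement of a single $E_j'$ to the complement of the whole union \emph{splits} components into smaller pieces, and superadditivity then gives $\sum_Y\sigma(Y)\le d$, i.e.\ the inequality points the wrong way. The paper closes this by induction on $k$: it picks a leaf $v'$ of the subtree $T$ spanned by the $v_j'$, writes $G'$ as $A_s\cup B_t$ with $A_s\cup B_t=G'$, and uses the \emph{exact} additivity $\sigma(A_s)+\sigma(B_t)=d$ of Lemma \ref{Additivity} to cancel the overcounted pieces. Without some such argument your proof of the converse direction does not go through.

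A secondary problem is your repeated appeal to Corollary \ref{DegreeOfComponents} (``an assigned component is precisely one of degree zero''), both for properness and as option (a) for specialization. That corollary is a statement about GIT-semistable curves in $U_{d,n}$, whereas $\mathcal{Z}_{\gamma,\vec{c}}$ is defined on arbitrary DM-stable dual graphs; an arbitrary DM-stable curve has no intrinsic ``degree'' until you map it to $U_{d,n}$, and the identification of contracted components with assigned ones is the content of Theorem \ref{ModularDescription}, which is proved \emph{after} and \emph{using} this proposition. Your first properness argument (counting leaves of small weight) is also too vague to check as stated. The paper's properness argument is cleaner and purely formal: once the specialization axiom is established, $\mathcal{Z}(G)=G$ would force the one-vertex graph to be assigned, but for a smooth curve the sum over the (empty) complement is $0\ne d$.
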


\begin{proof}
It suffices to show that $\mathcal{Z} := \mathcal{Z}_{\gamma , \vec{c}}$ satisfies the axioms of Definition \ref{def:extremal}.  We first show that $\mathcal{Z}$ is invariant under specialization.  Let $v \in \mathcal{Z}(G)$, and suppose that $G \leadsto G'$ is a specialization with $v \leadsto v'_1 \cup v'_2 \cup \cdots \cup v'_k$.  To see that $v'_i \in \mathcal{Z}$ for all $i$ as well, notice that the marked points on the connected components of $G \backslash \{ v'_i \}$ contain unions of the marked points of the connected components of $G \backslash \{ v \}$.  Thus, the result follows from Lemma \ref{lem:subadd}.

Next, suppose that $v'_i \in \mathcal{Z}(G)$ for $i=1,\ldots,k$.  We must show that $v \in \mathcal{Z}(G)$ as well.  We prove this by induction on $k$, the case $k=1$ being trivial.  To prove the inductive step, let $T$ be the subtree spanned by all of the $v'_i$ and let $v'$ be a leaf of $T$.  Let $A_1 , \ldots , A_s$ denote the connected components of $G' \backslash \{ v' \}$, and let $B_1 , \ldots , B_t$ denote the connected components of $(G' \backslash T) \cup \{ v' \}$.  By assumption, $\sum_{i=1}^s \sigma ( A_i ) = d$, and by induction we may assume that $\sum_{i=1}^t \sigma ( B_i ) = d$.  Note that exactly one of the $B_i$'s contains $v'$.  Without loss of generality, we assume that this is $B_t$.  Similarly, since $v'$ is a leaf of $T$, exactly one of the $A_i$'s contains $T \backslash \{ v' \}$, and we will assume that this is $A_s$.  Note that $A_s \cup B_t = G'$, hence by additivity $\sigma ( A_s ) + \sigma ( B_t ) = d$.  It follows that $\sum_{i=1}^{s-1} \sigma ( A_i ) + \sum_{i=1}^{t-1} \sigma ( B_i ) = d$.  But the components appearing in this sum are precisely the connected components of $G' \backslash T$, and the marked points on these connected components are the same as those on the components of $G \backslash \{ v \}$.  Thus $v \in \mathcal{Z}$.

Finally, we note that $\mathcal{Z}(G)\ne G$ for each $G$, since otherwise the specialization property proved above would imply that the graph with one vertex corresponding to a smooth curve is in $\mathcal{Z}$, which is clearly not the case.
\end{proof}

Consequently, by Theorem \ref{thm:Smyth}, there is a moduli space $\overline{M}_{0,n} (\mathcal{Z}_{\gamma , \vec{c}})$ of $\mathcal{Z}_{\gamma , \vec{c}}$-stable curves and a morphism $\overline{M}_{0,n} \to \overline{M}_{0,n} (\mathcal{Z}_{\gamma , \vec{c}})$ contracting all the assigned components.

\begin{theorem}
\label{ModularDescription}
Let $( \gamma , \vec{c} ) \in \Delta^\circ$ be a linearization admitting no strictly semistable points.  Then
$$U_{d,n} \gquot_{\gamma , \vec{c}} \SL(d+1) \cong \mathcal{M}_{\gamma, \vec{c}} \cong \overline{M}_{0,n} (\mathcal{Z}_{\gamma , \vec{c}}) .$$
Moreover, a curve is GIT-stable if and only if it is $\mathcal{Z}_{\gamma , \vec{c}}$-stable.
\end{theorem}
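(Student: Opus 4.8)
The plan is to establish the two claims simultaneously by showing that a pointed curve is GIT-stable (for this linearization $(\gamma,\vec{c})$ in an open chamber) if and only if it is $\mathcal{Z}_{\gamma,\vec{c}}$-stable, and then to upgrade this bijection on closed points to an isomorphism of the corresponding spaces. First I would use the morphism $\phi: \overline{M}_{0,n} \to U_{d,n}\gquot_{\gamma,\vec{c}}\SL(d+1)$ of Theorem \ref{thm:DM-GIT}, together with the contraction $c_\mathcal{Z}: \overline{M}_{0,n} \to \overline{M}_{0,n}(\mathcal{Z}_{\gamma,\vec{c}})$ from Theorem \ref{thm:Smyth}, as the two ``reference maps'' from $\overline{M}_{0,n}$. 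The strategy is to check that $\phi$ factors through $c_\mathcal{Z}$ and that the induced map $\overline{M}_{0,n}(\mathcal{Z}_{\gamma,\vec{c}}) \to U_{d,n}\gquot_{\gamma,\vec{c}}\SL(d+1)$ is an isomorphism.

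The key steps, in order, are as follows. (i) Given a GIT-stable curve $(X,p_1,\ldots,p_n)\in U_{d,n}^{ss}$, use Corollary \ref{cor-geomofstablecurves} to see that $X$ is a tree of rational normal curves with multinodal singularities, and use Corollary \ref{DegreeOfComponents} to compute the degree of every connected subcurve as $d - \sum \sigma(Y)$ over the components $Y$ of its complement. In particular, a component $E\subset X$ has degree zero exactly when $\sum\sigma(Y)=d$, i.e., exactly when $E\in\mathcal{Z}_{\gamma,\vec{c}}(X^s)$ for the stabilization $X^s$. This shows that a GIT-stable curve is obtained from a (unique) DM-stable curve by contracting precisely the $\mathcal{Z}_{\gamma,\vec{c}}$-assigned components to multinodal singularities of the correct multiplicity — that is, it is $\mathcal{Z}_{\gamma,\vec{c}}$-stable. (ii) Conversely, given a $\mathcal{Z}_{\gamma,\vec{c}}$-stable curve, I would produce from it an $n$-pointed rational normal curve degeneration in $\PP^d$ and verify, using the numerical criterion (Proposition \ref{NumericalCriterion}) and the flag one-parameter subgroups of Proposition \ref{Prop:Flag1PS}, that every destabilizing test configuration is ruled out: the bounds on point weights at singularities (Proposition \ref{SingularWeight}), the degree constraints on tails (Proposition \ref{DegreeOfTails}), and non-degeneracy (Proposition \ref{prop-degcurveunstable}) together with the no-automorphism statement (Proposition \ref{NoAutomorphisms}) precisely recover $\mathcal{Z}_{\gamma,\vec{c}}$-stability. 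This bijection on closed points, combined with the fact that $\mathcal{Z}$-stability and GIT-stability both carry a universal family, yields a morphism $\overline{M}_{0,n}(\mathcal{Z}_{\gamma,\vec{c}}) \to U_{d,n}\gquot_{\gamma,\vec{c}}\SL(d+1)$ compatible with the maps from $\overline{M}_{0,n}$. (iii) Finally, I would argue this morphism is an isomorphism: since there are no strictly semistable points, the GIT quotient is a geometric quotient and a normal projective variety with finite quotient singularities, $\overline{M}_{0,n}(\mathcal{Z}_{\gamma,\vec{c}})$ is a proper algebraic space, and the map is bijective and birational (it is an isomorphism over $M_{0,n}$); normality on the target then forces it to be an isomorphism by Zariski's main theorem, exactly as in the proof of Proposition \ref{thm-isomorphism}.

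I expect step (ii) — checking that a $\mathcal{Z}_{\gamma,\vec{c}}$-stable curve actually sits in $U_{d,n}$ as a GIT-stable point — to be the main obstacle. The subtlety is that $\mathcal{Z}$-stability is defined abstractly (via a contraction of a DM-stable curve), so one must first embed the resulting curve as a degree-$d$ genus-zero cycle spanning $\PP^d$ with the prescribed multinodal singularities, check that such an embedding exists and is unique up to $\PGL(d+1)$ (this uses that a connected subcurve of degree $e$ spans a $\PP^e$, Corollary \ref{cor-geomofstablecurves}(3), so the linear spans of the components are forced), and only then run the Hilbert--Mumford analysis. One must be careful that the worst destabilizing one-parameter subgroups are indeed the flag ones of Proposition \ref{Prop:Flag1PS}; for the general case this is presumably where the reduction arguments of Section \ref{section:destab} are invoked in full. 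Once the closed-point bijection is in hand, the remaining scheme-theoretic comparison is formal, following the template already used for $\overline{\pi}$ in Proposition \ref{thm-isomorphism}.
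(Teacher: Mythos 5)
Your overall architecture is reasonable, but step (ii) contains a genuine gap, and it is precisely the gap that the paper's proof is designed to avoid. To show that a $\mathcal{Z}_{\gamma,\vec{c}}$-stable curve, once embedded in $\PP^d$, is GIT-stable, you would need to verify the Hilbert--Mumford inequality of Proposition \ref{NumericalCriterion} for \emph{every} one-parameter subgroup. The results you cite (Propositions \ref{Prop:Flag1PS}, \ref{SingularWeight}, \ref{DegreeOfTails}, \ref{prop-degcurveunstable}) are all \emph{necessary} conditions for semistability, obtained by testing only the special ``flag'' subgroups $\lambda_V$; they do not show that a curve passing these tests is stable. The only place the paper carries out a full sufficiency analysis is Proposition \ref{StablePoints}, and that argument (induction on $d$, projection from a marked point, the basepoint-free pencil trick) is written only for a smooth rational normal curve with distinct marked points. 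Extending it to nodal curves with colliding points is not routine, and you give no indication of how you would do it; as written, step (ii) is an assertion, not a proof.

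The paper sidesteps this entirely by exploiting Smyth's classification (Theorem \ref{thm:Smyth}). Since there are no strictly semistable points, the quotient is a geometric quotient and hence a modular compactification; every such compactification is $\overline{M}_{0,n}(\mathcal{Z})$ for some extremal assignment, so it suffices to identify which components of a DM-stable curve are contracted by the morphism from $\overline{M}_{0,n}$. That computation is essentially your step (i) (via Corollary \ref{DegreeOfComponents} and the factorization through $\overline{M}_{0,n}(\PP^d,d)\gquot\SL(d+1)$), and it is the only hard content needed: Smyth's theorem then yields the isomorphism with no Zariski-main-theorem or Picard-number argument, and the ``moreover'' clause about stable curves is deduced \emph{afterwards} from the closedness of orbits in the semistable locus and the properness of $\overline{M}_{0,n}(\mathcal{Z}_{\gamma,\vec{c}})$, rather than by a direct Hilbert--Mumford verification. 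To repair your route you should either supply the missing stability computation for singular curves or, better, replace steps (ii) and (iii) with the appeal to Smyth's classification.
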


\begin{proof}
By Theorem \ref{thm:moduli}, it suffices to prove an equivalence of the two stacks $U_{d,n}\gquot_{\gamma, \vec{c}}\SL(d+1)$ and $\overline{M}_{0,n}(\mathcal{Z}_{\gamma, \vec{c}})$.

Consider the universal family $(\pi : \mathcal{X} \hookrightarrow U_{d,n}^{ss} \times \PP^{d} \to U_{d,n}^{ss}, \{s_{i}\})$ of pointed algebraic cycles. By forgetting the embedding structure, we have a family of reduced curves. We show that each fiber is a $\mathcal{Z}_{\gamma, \vec{c}}$-stable curve, thus there is a morphism $U_{d,n}^{ss} \to \overline{M}_{0,n}(\mathcal{Z}_{\gamma, \vec{c}})$. Indeed, for a cycle $(X, p_{1}, \cdots, p_{n}) \subset \PP^{d}$ in $U_{d,n}^{ss}$, take a stable map $(f : (\widetilde{X}, p_{1}, \cdots, p_{n}) \to \PP^{d}) \in \overline{M}_{0,n}(\PP^{d}, d)^{ss}$ whose image is $(X, p_{1}, \cdots, p_{n})$. Then by Remark \ref{rem:nostabilizing}, the domain of $f$ is a stable curve. Let $\rho : \widetilde{X} \to \bar{X}$ be the $\mathcal{Z}_{\gamma, \vec{c}}$-stable contraction. For any component $E \subset \widetilde{X}$, if $\sum_{Y \subset \overline{\widetilde{X} \backslash E}} \sigma (Y) = d$ where the sum is taken for all irreducible components of $\overline{\widetilde{X}\backslash E}$, then $\rho(E)$ is a point by the definition of $\mathcal{Z}_{\gamma, \vec{c}}$.  It follows from Corollary \ref{DegreeOfComponents} that $f \vert_E$ must have degree 0 and hence $E$ is contracted by $f$.  Conversely, if $\sum_{Y \subset \overline{\widetilde{X} \backslash E}} \sigma (Y) \neq d$ (so $\rho(E)$ is not a point), then since $\deg f(\widetilde{X}) = d$, $\deg f|_{E}\neq 0$ and hence $E$ is not contracted. Therefore $\bar{X} \cong X$.

Obviously the map $U_{d,n}^{ss} \to \overline{M}_{0,n}(\mathcal{Z}_{\gamma, \vec{c}})$ is $\mathrm{PGL}(d+1)$-invariant. So we have a map $U_{d,n}\gquot_{\gamma, \vec{c}}\SL(d+1) \cong U_{d,n}^{ss}/\mathrm{PGL}(d+1) \to \overline{M}_{0,n}(\mathcal{Z}_{\gamma, \vec{c}})$.

Conversely, let $(\pi : X \to B, \{s_{i}\})$ be a family of $\mathcal{Z}_{\gamma, \vec{c}}$-stable curves. By definition of $\mathcal{Z}_{\gamma, \vec{c}}$-stability, there is a family $(\pi^{s}: X^{s} \to B, \{s_{i}^{s}\})$ of stable curves such that some of its irreducible components are contracted by the extremal assignment $\mathcal{Z}_{\gamma, \vec{c}}$. Since $\overline{M}_{0,n}(\PP^{d}, d)^{ss} \to \overline{M}_{0,n}$ is a principal $\mathrm{PGL}(d+1)$-bundle, after replacing $B$ by an \'etale covering $B' \to B$, we obtain a family of stable maps $(\pi: X^{s}\times_{B}B' \to B', f: X^{s}\times_{B}B' \to \PP^{d}, \{s_{i}\})$. By taking the image cycle, we obtain a family $(\bar{\pi} : \overline{X}\times_{B}B' \to B', \bar{f} : \overline{X}\times_{B}B' \hookrightarrow \PP^{d}\times B', \{s_{i}\})$ of pointed algebraic cycles. So we have a morphism $B' \to U_{d,n}^{ss}$. From the construction, it is easy to see that it descends to $B \to U_{d,n}^{ss}/\mathrm{PGL}(d+1) \cong U_{d,n}\gquot_{\gamma, \vec{c}}\SL(d+1)$.

We claim that this construction is independent of the choice of family $(\pi^{s} : X^{s} \to B, \{s_{i}^{s}\})$ of stable curves and hence defines a morphism $\overline{M}_{0,n}(\mathcal{Z}_{\gamma, \vec{c}}) \to U_{d,n}\gquot_{\gamma, \vec{c}}\SL(d+1)$. To see this, we need to check that the contracted component of $X^{s}$ by $\mathcal{Z}_{\gamma, \vec{c}}$-stability is also contracted by the cycle map. The computation is identical to the previous one.

It is straightforward to see that the two morphisms constructed above give an equivalence of categories between $\overline{M}_{0,n}(\mathcal{Z}_{\gamma, \vec{c}})$ and $U_{d,n}\gquot_{\gamma, \vec{c}}SL(d+1) \cong \mathcal{M}_{\gamma, \vec{c}}$.
\end{proof}

\section{Maps Between Moduli Spaces}\label{section:maps}

In this section we describe maps between the various different quotients of $U_{d,n}$.  The gluing maps are related to known maps defined on $\overline{M}_{0,n}$.  The projection and VGIT maps, on the other hand, form a large set of explicit maps that do not appear previously in the literature.

\subsection{Gluing Maps}

The first maps we consider are helpful for understanding the boundary of these moduli spaces.  Recall that each of the boundary divisors in $\overline{M}_{0,n}$ corresponds to a subset $I \subset [n]$ with $\vert I \vert = i, 2 \leq i \leq \frac{n}{2}$.  Each such divisor $D_I$ is the image of a gluing map:
$$ \overline{M}_{0,i+1} \times \overline{M}_{0,n-i+1} \to \overline{M}_{0,n} .$$
In this section we describe a natural analogue of these gluing maps for the GIT quotients $U_{d,n} \gquot_{\gamma,\vec{c}} \SL(d+1) \cong \overline{M}_{0,n}(\mathcal{Z}_{\gamma,\vec{c}})$.

\begin{proposition}
\label{GluingMap}
Let $( \gamma , \vec{c} ) \in \Delta^\circ$ be such that there are no strictly semistable points, and let $I \subset [n]$ be a subset such that $\sigma (I) \neq 0,d$ and write $i = \vert I \vert$.  We write $\vec{c}_I$ for the vector consisting of the weights $c_i$ for all $i \in I$.  Then there is a ``gluing'' morphism $\Gamma_{i}$ such that the following diagram commutes:
$$\xymatrix{
\overline{M}_{0,i+1} \times \overline{M}_{0,n-i+1} \ar[r] \ar[d] & \overline{M}_{0,n} \ar[d] \\
\overline{M}_{0,i+1} (\mathcal{Z}_{\gamma , \vec{c}_I , b_I} ) \times \overline{M}_{0,n-i+1} ( \mathcal{Z}_{\gamma , \vec{c}_{I^c} , b_{I^c}} ) \ar[r]^{\phantom{aaaaaaaaaaaaaa}\Gamma_{i}} & \overline{M}_{0,n} ( \mathcal{Z}_{\gamma , \vec{c}} ) }$$
where $b_I = (1- \gamma ) \sigma (I) - ( c_{I}- 1) + \gamma$.  Similarly, if $\sigma (I) = d$, then there is a commutative diagram:
$$\xymatrix{
\overline{M}_{0,i+1} \times \overline{M}_{0,n-i+1} \ar[r] \ar[d] & \overline{M}_{0,n} \ar[d] \\
\overline{M}_{0,i+1} ( \mathcal{Z}_{\gamma , \vec{c}_I , b_I} ) \ar[r]^{\phantom{aaa}\Gamma} & \overline{M}_{0,n} ( \mathcal{Z}_{\gamma , \vec{c}} ) .}$$
Moreover, the horizontal maps are all injective.
\end{proposition}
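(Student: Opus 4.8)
The plan is to construct the gluing map $\Gamma_i$ directly on the level of curves and then verify commutativity and injectivity. First I would recall that, by Theorem \ref{ModularDescription}, each space $\overline{M}_{0,m}(\mathcal{Z})$ appearing here is a fine moduli space, so it suffices to describe $\Gamma_i$ as an operation on families of pointed curves. Given a $\mathcal{Z}_{\gamma,\vec{c}_I,b_I}$-stable curve $(X_1,p_{i_1},\ldots,p_{i_i},q)$ and a $\mathcal{Z}_{\gamma,\vec{c}_{I^c},b_{I^c}}$-stable curve $(X_2, p_{j_1},\ldots,p_{j_{n-i}},q')$ (where $q,q'$ are the ``extra'' marked points carrying the auxiliary weights $b_I$, $b_{I^c}$), form the nodal union $X_1 \cup_{q\sim q'} X_2$ with the remaining $n$ markings. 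The first key step is to check that this glued curve is $\mathcal{Z}_{\gamma,\vec{c}}$-stable. For this I would compare the contraction structure: a component $E \subset X_1$ lies in $\mathcal{Z}_{\gamma,\vec{c}_I,b_I}(X_1)$ iff the degrees $\sigma$ of the tails of $\overline{X_1\setminus E}$ sum to $d$, and one must see that the same holds for $E$ viewed inside the glued curve, where now one of the ``tails'' is the entire $X_2$ side. This is exactly where the precise value $b_I = (1-\gamma)\sigma(I) - (c_I - 1) + \gamma$ is forced: it is designed so that $\sigma$ of the tail $\overline{X_1\setminus E}$ computed with the weight datum $(\vec{c}_I,b_I)$ agrees with $\sigma$ of the corresponding tail on the $X_1$ side computed with $\vec c$ — i.e. $b_I$ is chosen so that $\varphi$ of a set containing $q$ with weight $b_I$ reproduces the degree $\sigma(I)$ that the attached $X_2$-tail must carry by Proposition \ref{DegreeOfTails}. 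I would verify this by a short computation with the ceiling function, using the non-integrality/no-strictly-semistable hypothesis to pin down $\sigma(I)$ on the nose.

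Next I would handle the degenerate case $\sigma(I) = d$. Here the component $E$ meeting the node on the $X_1$ side must, by Corollary \ref{DegreeOfComponents}, have degree $0$ — the whole $X_1$ side is contracted — so only the $X_2$ side survives and $\Gamma_i$ has a one-dimensional source $\overline{M}_{0,i+1}(\mathcal{Z}_{\gamma,\vec{c}_I,b_I})$. I would observe that this space is actually a point (or at least that the glued curve's isomorphism class only depends on the $X_2$ factor and the attaching point), which is why the second diagram omits the $\overline{M}_{0,n-i+1}$ factor; this again follows from tracking which components get contracted under $\mathcal{Z}_{\gamma,\vec c}$.

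For commutativity of the two squares, I would use that $\overline{M}_{0,n}\to \overline{M}_{0,n}(\mathcal{Z}_{\gamma,\vec c})$ is the $\mathcal{Z}$-contraction (Theorem \ref{thm:Smyth}) and that the classical gluing map $\overline{M}_{0,i+1}\times\overline{M}_{0,n-i+1}\to\overline{M}_{0,n}$ is compatible with contracting components fiberwise: contracting the assigned components of $X_1\cup X_2$ is the same as first contracting the assigned components of each factor (with the auxiliary weight on the node) and then gluing. This is a diagram-chase once the matching of extremal assignments under gluing is established, which is precisely the content of the $b_I$ computation above.

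Finally, injectivity of the horizontal maps. The top horizontal map is the usual gluing map, whose injectivity is standard (the node is the unique point where the two pieces meet, so the curve determines the splitting). For the bottom map $\Gamma_i$: given a $\mathcal{Z}_{\gamma,\vec c}$-stable curve in the image, the partition $I\sqcup I^c$ together with the condition $\sigma(I)\neq 0,d$ singles out a distinguished tail (the one whose markings are exactly $I$), hence a distinguished separating node, and from this node one recovers $(X_1,\ldots)$ and $(X_2,\ldots)$ uniquely up to the data already recorded by the auxiliary markings $q,q'$; so $\Gamma_i$ is injective. The main obstacle I anticipate is the bookkeeping in the first step — verifying that $b_I$ is exactly the weight that makes $\sigma$ agree across the gluing, including the edge effects when $\sigma(I)$ is extremal — and making sure the auxiliary weight stays in the allowed range $(0,1)$ so that $\overline{M}_{0,i+1}(\mathcal{Z}_{\gamma,\vec c_I,b_I})$ is one of the spaces to which Theorem \ref{ModularDescription} applies; this last point may require the hypothesis $\sigma(I)\neq 0$ (resp.\ the separate treatment when $\sigma(I)=d$).
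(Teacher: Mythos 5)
Your treatment of the main case ($\sigma(I)\neq 0,d$) is essentially the paper's argument run in the opposite direction: you propose to glue two already-contracted $\mathcal{Z}$-stable curves and check stability of the result, whereas the paper starts with two Deligne--Mumford stable curves, glues them, applies the contraction $\overline{M}_{0,n}\to\overline{M}_{0,n}(\mathcal{Z}_{\gamma,\vec{c}})$, and shows that the image of the $X_I$-half is exactly the $\mathcal{Z}_{\gamma,\vec{c}_I,b_I}$-contraction of $X_I$. The key computation is the same either way: $b_I$ is precisely the weight making $(\gamma,\vec{c}_I,b_I)$ satisfy the normalization $(\sigma(I)-1)\gamma+c_I+b_I=\sigma(I)+1$, so that Lemma \ref{Additivity} applies on the degree-$\sigma(I)$ side and gives $\sigma_I(Y)=\sigma(I)-\sigma(\overline{X_I\setminus Y})=\deg f(Y)$ for tails $Y$ containing the attaching point; you identified this role of $b_I$ correctly. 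Note, though, that the paper's top-down order buys two things you would have to recover by hand: commutativity of the square is automatic (the bottom map is \emph{defined} as the descent of the top one), and one never has to say what ``nodal union'' means when the attaching point $q$ of a $\mathcal{Z}$-stable curve is itself a multinodal singularity or a point where markings collide, which can happen on these compactifications.

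There is, however, a genuine error in your handling of the case $\sigma(I)=d$. In that case it is the $X_{I^c}$-side that is contracted, not the $X_I$-side: by Proposition \ref{DegreeOfTails} and additivity, $\deg f(X_{I^c})=\sigma(I^c)=d-\sigma(I)=0$, so the entire $I^c$-half collapses to the attaching point of $f(X_I)$, which has full degree $d$. That is exactly why the surviving factor in the second diagram is $\overline{M}_{0,i+1}(\mathcal{Z}_{\gamma,\vec{c}_I,b_I})$ --- the moduli of the $I$-side --- and why the gluing map factors through the projection onto that factor. Your version reverses the roles (``the whole $X_1$ side is contracted so only the $X_2$ side survives'') while simultaneously naming the $X_1$-side space as the source, and then asserts that $\overline{M}_{0,i+1}(\mathcal{Z}_{\gamma,\vec{c}_I,b_I})$ is a point; it is not --- it is a compactification of $M_{0,i+1}$, of dimension $i-2$. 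As written this step does not establish the second diagram, though the fix is only to swap which side is contracted and argue that the image curve depends only on the $X_I$-factor together with the position of $q$.
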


\begin{proof}
First of all, we prove the existence of $\Gamma_{i}$. By using Theorem \ref{ModularDescription}, let $\mathcal{M}_{\gamma, \vec{c}_{I}, b_{I}} := \overline{M}_{0, i+1}(\mathcal{Z}_{\gamma, \vec{c}_{I}, b_{I}})$ and let $\mathcal{M}_{\gamma, \vec{c}_{I^{c}}, b_{I^{c}}} := \overline{M}_{0, n-i+1}(\mathcal{Z}_{\gamma, \vec{c}_{I^{c}}, b_{I^{c}}})$. For a base scheme $B$, let $(\pi_{1} : X_{1} \to B, \{s_{j}, p\}, L_{1})$ (resp. $(\pi_{2} : X_{2} \to B, \{t_{k}, q\}, L_{2})$) be a family of $(\gamma, \vec{c}_{I}, b_{I})$-stable $d_{1}$-polarized curves (resp. $(\gamma, \vec{c}_{I^{c}}, b_{I^{c}})$-stable $d_{2}$-polarized curves). Note that the gluing of two schemes along isomorphic closed subschemes always exists in the category of schemes. So we can glue $X_{1}$ and $X_{2}$ along two isomorphic sections $p$ and $q$, and obtain $X$. Since we glued along sections, there is a morphism $\pi : X \to B$ and sections $\{s_{j}, t_{k}: B \to X\}$. Finally, two line bundles $L_{1}$ and $L_{2}$ also can be glued if we consider them as $\mathbb{A}^{1}$-fibrations over $X_{1}$ and $X_{2}$. So over $X$, there is a line bundle $L$ which is of degree $d:=d_{1}+d_{2}$ over each fiber of $\pi$. This is a flat family, since the Hilbert polynomials of fibers are constant. This construction is functorial, thus we have a morphism of stacks from $\mathcal{M}_{\gamma, \vec{c}_{I}, b_{I}} \times \mathcal{M}_{\gamma, \vec{c}_{I^{c}}, b_{I}^{c}}$ to the stack of $n$-pointed genus zero curves. 

Now we need to show that the glued family $(\pi : X = X_{1}\cup_{p=q}X_{2} \to B, \{p_{i}\}:=\{s_{j},t_{k}\}, L)$ is in $\mathcal{M}_{\gamma, \vec{c}} \cong \overline{M}_{0,n}(\mathcal{Z}_{\gamma, \vec{c}})$. It suffices to check this fiberwise. So we may assume that $B$ is a closed point. For a point $x \in X$, if it is not the gluing point, then 
\[
	\sum_{p_{i}=x}c_{i} < 1-(m-1)\gamma
\]
is immediate. If $x$ is the gluing point of $p$ and $q$ of multiplicity $m_{1}$ and $m_{2}$ respectively, 
\[
	\sum_{p_{i}=x}c_{i} = \sum_{s_{j}=p}c_{i} + \sum_{t_{k}=q}c_{i}
	< 1 - (m_{1}-1)\gamma - b_{I} + 1 - (m_{2}-1)\gamma - b_{I^{c}}
\]
\[
	= 1 - (m_{1}+m_{2}-1)\gamma.
\]
Since the multiplicity of $x$ in $X$ is $m_{1}+m_{2}$, it satisfies the first numerical condition in Definition \ref{def:dpolarizedcurves}.

Next, since $X$ is a gluing of two curves at one point, for a tail $Y$, $Y$ or its complement tail $\overline{X\backslash Y}$ is contained in one of $X_{1}$ or $X_{2}$. If $Y = X_{1}$ (so $\overline{X\backslash Y} = X_{2}$), then $\deg L|_{X_{1}} = \deg L_{1}|_{X_{1}} = \lceil \frac{c_{I}+b_{I}-1}{1-\gamma}\rceil \ge \lceil \frac{c_{I}-1}{1-\gamma}\rceil = \sigma(X_{1}) = d_{1}$. By the same idea, $\deg L|_{X_{2}} \ge \sigma(X_{2}) = d_{2}$. Now since $d_{1}+d_{2}=d = \deg L|_{X_{1}}+\deg L|_{X_{2}}$, $\deg L|_{X_{1}} = \sigma(X_{1})$ and $\deg L|_{X_{2}} = \sigma(X_{2})$. 

If $Y$ is a proper subset of $X_{1}$, then $\deg L|_{Y} = \deg L_{1}|_{Y} = \sigma(Y)$ because $\sigma(Y)$ depends only on $\{c_{i}\}_{p_{i} \in Y}$ and $\gamma$, not on $d_{1}$ or $d$. Finally if $\overline{X\backslash Y}$ is a proper subset of $X_{1}$, then 
\[
	\deg L|_{Y} = d - \deg L|_{\overline{X\backslash Y}} = 
	d - \deg L_{1}|_{\overline{X\backslash Y}} = 
	d - \sigma(\overline{X \backslash Y}) = \sigma(Y).
\]
Note that the last equality holds because the numerical data $(\gamma, \vec{c})$ satisfies the normalization condition $(d-1)\gamma + \sum c_{i} = d +1$, hence the additivity lemma (Lemma \ref{Additivity}) holds. Therefore all tails have correct degrees. So it is in $\mathcal{M}_{\gamma, \vec{c}}$. 

Having proven the existence of the gluing morphism, to check commutativity of the diagram is straightforward. We leave the simpler case $\sigma(I) = d$ to the reader. 
\end{proof}

\begin{remark}
\label{Normality}
We would like to conclude more strongly that the gluing maps are all embeddings, which would follow if the varieties in question were all normal.  Several of the results below about maps between these GIT quotients could be similarly strengthened using normality.  We note here that, since the map $\overline{M}_{0,n} \to \overline{M}_{0,n} ( Z_{\gamma, \vec{c}} )$ has connected fibers, the normalization map $\overline{M}_{0,n} ( Z_{\gamma, \vec{c}} )^{\nu} \to \overline{M}_{0,n} ( Z_{\gamma, \vec{c}} )$ (equivalently, $U_{d,n}\gquot_{\gamma, \vec{c}}\SL(d+1)^{\nu} \to U_{d,n}\gquot_{\gamma, \vec{c}}\SL(d+1)$) is bijective.  Although we strongly suspect that it is indeed an isomorphism, at present we have no proof.
\end{remark}

\subsection{Projection Maps}

Another natural set of maps between these moduli spaces is given by projection from the marked points.

\begin{proposition}
Let $( \gamma , \vec{c} ) \in \Delta^\circ$ be such that there are no strictly semistable points, and suppose that $d \geq 2$ and $c_1 > 1- \gamma$.  Then projection from $p_1$ defines a birational morphism
$$ \pi_i : U_{d,n} \gquot_{( \gamma , \vec{c} )} \SL(d+1) \to U_{d-1,n} \gquot_{( \gamma , c_1 - (1 - \gamma ) , c_2 \ldots , c_n )} \SL(d) .$$
\end{proposition}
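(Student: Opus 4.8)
The plan is to realize $\pi_1$ via the modular descriptions of the two quotients (Theorem~\ref{ModularDescription}), which turns the claim into a combinatorial comparison of extremal assignments, and then to read off birationality from the classical geometry of projecting a rational normal curve from a point on it. First I would record the target $L' := (\gamma,\, c_1 - (1-\gamma),\, c_2, \ldots, c_n)$ and check it is admissible: from $\sum_{i=1}^n c_i = (d+1) - (d-1)\gamma$ one computes $(d-2)\gamma + (c_1 - (1-\gamma)) + \sum_{i \ge 2} c_i = d$, and the inequalities $0 < \gamma < 1$, $0 < c_1 - (1-\gamma) < 1$ (here $c_1 > 1-\gamma$ and $c_1 < 1$ enter) and $0 < c_i < 1$ for $i \ge 2$ all hold, so $L' \in \Delta^\circ$ for $U_{d-1,n}$ (and $d \ge 2$ is needed so that $U_{d-1,n}$ is defined). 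I would also note that $L'$ has no strictly semistable points: writing $\varphi'$ for the function $\varphi$ of \S\ref{section:WeightFcn} computed with respect to $L'$, one has $\varphi'(I) = \varphi(I,\gamma,\vec c)$ for $1 \notin I$ and $\varphi'(I) = \varphi(I,\gamma,\vec c) - 1$ for $1 \in I$, so a linearization $(\gamma,\vec c)$ lying off every wall $\varphi(I,\cdot)^{-1}(k)$ of $U_{d,n}$ forces $L'$ off every wall of $U_{d-1,n}$ (Proposition~\ref{GITWalls}, Corollary~\ref{NoSemistablePoints}). Hence by Theorem~\ref{ModularDescription} both quotients are Smyth spaces, $U_{d,n} \gquot_{(\gamma,\vec c)} \SL(d+1) \cong \overline{M}_{0,n}(\mathcal{Z}_{\gamma,\vec c})$ and $U_{d-1,n} \gquot_{L'} \SL(d) \cong \overline{M}_{0,n}(\mathcal{Z}_{L'})$.

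Next I would observe that $p_1$ always lies on the smooth locus of a semistable curve: by Proposition~\ref{SingularWeight} the total weight at a singularity of multiplicity $m \ge 2$ is at most $1-(m-1)\gamma < c_1$, so $p_1$ cannot sit at a singular point. Thus linear projection from $p_1$ is everywhere defined on semistable curves, carrying $X \subset \PP^d$ to a degree $d-1$ curve $\overline{X} \subset \PP^{d-1}$ with marked points $\overline{p_i}$ ($\overline{p_1}$ being the image of the tangent direction of $X$ at $p_1$). To see that this descends to a morphism $\pi_1$ I would invoke Theorem~\ref{ModularDescription} together with the fact that an inclusion of extremal assignments induces a morphism between the corresponding modular compactifications \cite{Smy09}; it then suffices to prove the inclusion $\mathcal{Z}_{\gamma,\vec c} \subseteq \mathcal{Z}_{L'}$, and the resulting $\pi_1$ is automatically compatible with the maps from $\overline{M}_{0,n}$.

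Establishing that inclusion is the step I expect to be the main obstacle. Fix a Deligne--Mumford stable $(X, p_1, \ldots, p_n)$ and a component $E$ with $E \in \mathcal{Z}_{\gamma,\vec c}(X)$, i.e.\ $\sum_Y \sigma(Y) = d$ over the connected components $Y$ of $\overline{X \setminus E}$; I must deduce $\sum_Y \sigma'(Y) = d-1$, where $\sigma'$ is the function $\sigma$ computed for $L'$. The relation between $\varphi'$ and $\varphi$ gives $\sigma'(I) = \min(\sigma(I), d-1)$ for $1 \notin I$ and $\sigma'(I) = \max(\sigma(I)-1, 0)$ for $1 \in I$; since exactly one $Y$ contains $p_1$ (or none, if $p_1 \in E$), direct bookkeeping shows $\sum_Y \sigma'(Y) = d-1$ \emph{unless} two or more of the $Y$'s satisfy $1 \le \sigma(Y) \le d-1$. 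That residual case is ruled out by a weight count in the spirit of Lemmas~\ref{lem:subadd} and \ref{Additivity}: such a tail $Y$, say with $\sigma(Y) = k \ge 1$, has $c_Y > 1 + (k-1)(1-\gamma)$, so if $t \ge 2$ of the $Y$'s are of this type then, using $\sum_Y \sigma(Y) = d$ and the bound $c_1 > 1-\gamma$ on the weight at $p_1$, one obtains $\sum_{i=1}^n c_i > t + (d-t+1)(1-\gamma)$, which contradicts $\sum_{i=1}^n c_i = (d+1)-(d-1)\gamma$ (it would force $t < 2$). Hence $\mathcal{Z}_{\gamma,\vec c} \subseteq \mathcal{Z}_{L'}$, and $\pi_1$ exists.

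Finally I would identify $\pi_1$ with projection and confirm birationality. Unwinding Theorem~\ref{ModularDescription}, $\pi_1$ carries the point represented by a $\mathcal{Z}_{\gamma,\vec c}$-stable curve to that same curve contracted further along $\mathcal{Z}_{L'} \setminus \mathcal{Z}_{\gamma,\vec c}$, and by Corollary~\ref{DegreeOfComponents} this is precisely the effect of embedding in $\PP^d$ and projecting from $p_1$: the image of the component through $p_1$ drops one in degree and is contracted exactly when that degree was one, while every other component maps isomorphically. Over $M_{0,n}$ the morphism is the classical projection of a smooth degree $d$ rational normal curve from a point lying on it, which in the standard parametrization ($X \cong \PP^1$, with $p_1$ at parameter $t_1$) returns $(\PP^1; t_1, \ldots, t_n)$ up to isomorphism; so $\pi_1|_{M_{0,n}}$ is an isomorphism and $\pi_1$ is birational. (A direct Hilbert--Mumford argument, lifting a destabilizing one-parameter subgroup of $\SL(d)$ for $\overline{X}$ to one of $\SL(d+1)$ for $X$, seems less convenient here, since the $e_\lambda$-estimates of Proposition~\ref{StablePoints} run opposite to the implication one needs.)
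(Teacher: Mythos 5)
Your route is genuinely different from the paper's, and its load-bearing step has a gap. You propose to obtain $\pi_1$ abstractly from Theorem \ref{ModularDescription} together with ``the fact that an inclusion of extremal assignments induces a morphism between the corresponding modular compactifications,'' citing \cite{Smy09}. That is not available off the shelf: Smyth constructs the contraction $\overline{M}_{0,n}\to\overline{M}_{0,n}(\mathcal{Z})$, but does not produce morphisms $\overline{M}_{0,n}(\mathcal{Z})\to\overline{M}_{0,n}(\mathcal{Z}')$ for nested assignments. Pointwise the further contraction is clear, but to get a morphism you must either perform the contraction in families over an arbitrary base (essentially redoing Smyth's construction with a $\mathcal{Z}$-stable rather than DM-stable source) or descend $\overline{M}_{0,n}\to\overline{M}_{0,n}(\mathcal{Z}')$ through $\overline{M}_{0,n}\to\overline{M}_{0,n}(\mathcal{Z})$; the latter requires $\mathcal{O}$-connectedness or normality of $\overline{M}_{0,n}(\mathcal{Z})$, which Remark \ref{Normality} explicitly leaves open. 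The paper sidesteps all of this: it defines projection from $p_1$ directly on the semistable locus (legitimate because $p_1$ is a smooth point of every semistable curve, as you also observe), checks via the tail-degree formula of Proposition \ref{DegreeOfTails} that the projected curve is stable for $(\gamma, c_1-(1-\gamma), c_2,\ldots,c_n)$ if and only if the original is stable for $(\gamma,\vec{c})$ --- every component keeps its degree except the one through $p_1$, which drops by one --- and then descends using the universal property of the categorical quotient. No normality and no Smyth machinery are needed.

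That said, your combinatorial core is correct and is really the paper's computation in different packaging: the verification that the target linearization lies in $\Delta^\circ$ off the walls, the formulas $\sigma'(I)=\min(\sigma(I),d-1)$ for $1\notin I$ and $\sigma'(I)=\max(\sigma(I)-1,0)$ for $1\in I$, and the weight count $\sum c_i > t+(d-t+1)(1-\gamma)$ all check out. One imprecision: the residual case you set out to exclude (``two or more of the $Y$'s satisfy $1\le\sigma(Y)\le d-1$'') is not actually impossible --- it occurs whenever $p_1$ lies on a tail $Y_1$ with $\sigma(Y_1)\ge 1$ and some other tail also has positive $\sigma$ --- and your weight count only applies when $p_1$ lies on none of the $t$ positive-$\sigma$ tails, since otherwise you cannot add $c_1>1-\gamma$ as a separate contribution. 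Fortunately that is exactly the situation in which the direct bookkeeping already yields $\sum_Y\sigma'(Y)=d-1$, so the inclusion $\mathcal{Z}_{\gamma,\vec{c}}\subseteq\mathcal{Z}_{L'}$ stands once the case division is stated correctly. If you repair the existence of the morphism by arguing on semistable loci as the paper does, your $\sigma'$ bookkeeping becomes precisely the displayed tail-degree comparison in the paper's proof.
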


\begin{proof}
First, note that since $c_1 > 1 - \gamma$, every GIT-stable curve is smooth at $p_1$ by Corollary \ref{SmoothWeight}. It follows that, if $(X, p_1 , \ldots , p_n )$ is a GIT-stable curve, then its projection $\pi_{p_1} (X, p_1 , \ldots , p_n )$ is a connected rational curve of degree $d-1$ in $\PP^{d-1}$.  We show that this projected curve is stable for the linearization $( \gamma , c_1 - (1 - \gamma ) , c_2 \ldots , c_n )$ if and only if the original curve is stable for the linearization $( \gamma , c_1 , \ldots , c_n )$.  Indeed, every component of $\pi_{p_1} (X)$ has the same degree as its preimage, unless its preimage contains $p_1$, in which case the degree drops by one.  It follows that, for any tail $Y \subset \pi_{p_1} (X)$, we have
\begin{displaymath}
\deg (Y) = \left\{ \begin{array}{ll}
\lceil \frac{ ( \sum_{p_i \in Y} c_i ) - 1}{1 - \gamma} \rceil & \textrm{if $p_1 \notin Y$}\\
\lceil \frac{ ( \sum_{p_i \in Y} c_i ) - (1 - \gamma ) - 1}{1 - \gamma} \rceil & \textrm{if $p_1 \in Y$}
\end{array} \right.
\end{displaymath}
But this is exactly the condition for stability of points in $U_{d-1,n}$ for the linearization $( \gamma , c_1 - (1 - \gamma ) , c_2 \ldots , c_n ))$.
\end{proof}

\begin{proposition}
\label{ProjectionIsomorphism}
The projection map $\pi_1$ is a bijective morphism if and only if, for every partition $\{2, \ldots , n \} = I_1 \sqcup \cdots \sqcup I_k$ into at least 3 disjoint sets, we have $\sum_{i=1}^k \sigma ( I_i ) \neq d-1$.
\end{proposition}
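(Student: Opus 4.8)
The plan is to realize $\pi_1$ as a Smyth-type contraction morphism and then read off its positive-dimensional fibers. Since $\pi_1$ is a proper birational surjection restricting to the identity on the common open locus $M_{0,n}$, bijectivity is equivalent to every fiber being a single point. I would first pin down which components $\pi_1$ collapses. By Theorem \ref{ModularDescription} both source and target are Smyth spaces, $U_{d,n}\gquot_{(\gamma,\vec{c})}\SL(d+1)\cong\overline{M}_{0,n}(\mathcal{Z}_{\gamma,\vec{c}})$ and $U_{d-1,n}\gquot\SL(d)\cong\overline{M}_{0,n}(\mathcal{Z}_{\gamma,c_1-(1-\gamma),c_2,\dots,c_n})$. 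I would compute how $\sigma$ changes when $c_1$ is lowered by $1-\gamma$ (it is unchanged on subsets avoiding $1$, and drops by one, clipped below at $0$, on subsets containing $1$), and combine this with the hypothesis $c_1>1-\gamma$ — which by Corollary \ref{SmoothWeight} and Proposition \ref{SingularWeight} forces $p_1$ to lie at a smooth point of every GIT-stable curve — to conclude $\mathcal{Z}_{\gamma,\vec{c}}\subseteq\mathcal{Z}_{\gamma,c_1-(1-\gamma),\dots}$ and that on any DM-stable curve the difference consists of at most one component: the one which in the GIT-stable model is the degree-$1$ component through $p_1$. Equivalently, via the explicit projection picture of the previous proposition, $\pi_1$ collapses precisely the component $E$ through $p_1$ when $\deg E=1$, i.e. when $\sum_Y\sigma(Y)=d-1$, the sum over the connected components $Y$ of $\overline{X\setminus E}$ (Corollary \ref{DegreeOfComponents}).

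Next I would analyze the fibers of this contraction. Over a target-stable curve $X''$ the fiber is the moduli of ways to ``un-contract'' the point $z$ of $X''$ carrying marked point $1$ back into a line $E$ through $p_1$, everything else being rigid: by Corollary \ref{cor-geomofstablecurves} the remaining components are the isomorphic preimages of the components of $X''$ and hence uniquely determined. This moduli is the space of configurations on $E\cong\PP^1$ of $p_1$, the remaining marked points sitting at $z$, and the attaching points to the branches of $X''$ at $z$, taken up to automorphism and $\mathcal{Z}$-stability — a Smyth space of dimension $N-3$, where $N$ is the number of these special-point classes. Since $p_1$ always contributes one class, the fiber is positive-dimensional exactly when the branches together with the extra marked points at $z$ number at least three; these are in bijection with the parts of a partition $\{2,\dots,n\}=I_1\sqcup\cdots\sqcup I_k$, the $I_j$ being the marked-point sets of the branches together with singletons for marked points placed directly on $E$ (singletons contribute $\sigma=0$). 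The condition $\deg E=1$ translates, again by Corollary \ref{DegreeOfComponents}, into $\sum_j\sigma(I_j)=d-1$.

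Putting this together, $\pi_1$ fails to be bijective if and only if some target-stable $X''$ has a positive-dimensional fiber, i.e. if and only if there is a partition of $\{2,\dots,n\}$ into $k\ge3$ parts with $\sum_j\sigma(I_j)=d-1$. For the forward implication I would, given such a partition, explicitly exhibit the source-stable curve consisting of a line $E$ through $p_1$ meeting $k$ tails of degrees $\sigma(I_j)$ carrying the sets $I_j$ (contracted tails, or direct marked points, when $\sigma(I_j)=0$): this has total degree $1+\sum_j\sigma(I_j)=d$, is GIT-stable by the modular description (its $p_1$-line is not assigned, since $\sum_j\sigma(I_j)=d-1\ne d$), and moves in a positive-dimensional family parametrized by the cross-ratio on $E$, all members projecting from $p_1$ to the same $X''$. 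For the converse, when no such partition exists every line through $p_1$ occurring in a source-stable curve has at most two branches plus direct marked points, hence at most three special points, hence is rigid, so the un-contraction over each $X''$ is unique and every fiber is a point.

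I expect the main obstacle to be the rigidity statement underlying the fiber description: that, away from the $p_1$-line, the lift of $X''$ is uniquely determined. One must check that each component of a GIT-stable curve not meeting $p_1$ projects isomorphically onto its image — its linear span cannot contain $p_1$, for otherwise the total degree would drop by more than one under projection, contradicting that only the $p_1$-component loses degree — and that the component through $p_1$, when of degree $\ge2$, is likewise recovered uniquely from $X''$; this latter point is the delicate one and is precisely where Corollary \ref{cor-geomofstablecurves} on the spans of connected subcurves carries the weight. A secondary nuisance is the bookkeeping around non-generic linearizations and the boundary values $\sigma(I_j)\in\{0,d\}$, which I would handle by the perturbation argument used in Proposition \ref{prop:Hassett} together with Lemmas \ref{Additivity} and \ref{lem:subadd}.
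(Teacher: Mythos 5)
Your proposal is correct and follows essentially the same route as the paper's (much terser) proof: identify the collapsed components as the degree-$1$ components through $p_1$, observe that bijectivity fails exactly when such a component has moduli, i.e.\ at least four special points, and translate the degree condition via Corollary \ref{DegreeOfComponents} into $\sum_j \sigma(I_j) = d-1$ for a partition into at least three parts. The extra care you take with the rigidity of the lift away from the $p_1$-component and with the explicit positive-dimensional family is a sound elaboration of what the paper leaves implicit.
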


\begin{proof}
Let $E \subset X$ be a component of a GIT-stable curve with respect to the linearization $( \gamma , \vec{c} )$.  $E$ is contracted by the projection map if and only if $p_1 \in E$ and deg $E=1$.  It follows that the map is bijective if and only if every such component has no moduli, which is equivalent to every such component having exactly three special points, where here a ``special point'' is either a singular point (regardless of the singularity type) or a marked point (regardless of how many of the $p_i$'s collide at that point).  By Corollary \ref{DegreeOfComponents}, we therefore see that $\pi_1$ is a bijective morphism if and only if the hypothesis holds.
\end{proof}

\subsection{Wall-Crossing Maps}

One of the benefits of our GIT approach is that, by varying the choice of linearization, we obtain explicit maps between our moduli spaces.  The nature of these maps can be understood using the general theory of variation of GIT.

Recall that, by Proposition \ref{GITWalls}, the GIT walls in $\Delta^\circ$ are of the form $\varphi (I, \cdot )^{-1} (k)$ for any given subset $I \subset [n]$ and integer $k$.  For a fixed such $I$ and $k$, we let $( \gamma , \vec{c} ) \in \varphi (I, \cdot )^{-1} (k) = \varphi (I^c , \cdot )^{-1} (d-1-k)$ be such that $( \gamma , \vec{c} )$ does not lie on any other walls, and we write
$$ U_{d,n} \gquot_{\gamma,\vec{c},0} \SL(d+1) := U_{d,n} \gquot_{\gamma , \vec{c}} \SL(d+1) .$$
Similarly, we will write $U_{d,n} \gquot_{\gamma,\vec{c},+} \SL(d+1)$ and $U_{d,n} \gquot_{\gamma,\vec{c},-} \SL(d+1)$ for the GIT quotients corresponding to the neighboring chambers, which are contained in $\varphi (I, \cdot )^{-1} ( \{ x>k \} )$ and $\varphi (I, \cdot )^{-1} ( \{ x<k \} )$, respectively.  We will write $\sigma_+$, $\sigma_-$ for the $\sigma$ functions on either side of the wall.  Note that, for any subset $A \subset [n]$, $\sigma_+ (A) = \sigma_- (A)$ if and only if $A \neq I, I^c$.  By general VGIT, there is a commutative diagram:
$$\xymatrix{
U_{d,n} \gquot_{\gamma,\vec{c},+} \SL(d+1) \ar@{<.>}[rr] \ar[rd] & & U_{d,n} \gquot_{\gamma,\vec{c},-} \SL(d+1) \ar[ld] \\
 & U_{d,n} \gquot_{\gamma,\vec{c},0} \SL(d+1) & . }$$

We now consider stability conditions at a wall.  For these linearizations, a new type of semistable curve appears:

\begin{definition}
A pointed curve $(X, p_1 , \ldots , p_n ) \in U_{d,n}$ is a \textbf{$(\gamma,\vec{c})$-bridge} if:
\begin{enumerate}
\item  $X$ has a degree 1 component $D$ such that $\vert D \cap \overline{X \backslash D} \vert = 2$;
\item  If we write $X_{I}, X_{I^c}$ for the connected components of $\overline{X \backslash D}$, then $X_I$ is marked by the points in $I$ and $X_{I^c}$ is marked by the points in $I^c$;
\item  If $E \subset X_I$ (resp. $X_{I^c}$) is a connected subcurve, then the degree of $E$ is equal to $d - \sum_Y \sigma_- (Y)$ (resp. $d - \sum_Y \sigma_+ (Y)$), where the sum is over all connected components of $\overline{X \backslash E}$.
\end{enumerate}
\end{definition}

Note that, by definition, $\deg ( X_I ) = k$ and $\deg ( X_{I^c} ) = d-(k+1)$, as in the following picture:

\begin{center}
\small
\begin{tikzpicture}[scale=0.8]
	\draw[line width=1pt] (0, 10) .. controls(0.2, 9) and (1, 8) ..
	(2, 7.3)
	node[pos=0, above] {$Y_{I}$}
	node[pos=0.2, left] {$\deg k+1$};
	\draw[line width=1pt] (1.1, 7.4) .. controls(3, 8) and (3.5, 9) ..
	(4, 10)
	node[pos=1, above] {$X_{I^{c}}$}
	node[pos=0.7, right] {$\deg d-(k+1)$};
	
	\draw[line width=1pt] (8, 10) -- (10, 7)
	node[pos=0, above] {$X_{I}$}
	node[pos=0.3, left] {$\deg k$};	
	\draw[line width=1pt] (9.1, 7.3) to [out=20, in=-90] (11, 9);
	\draw[line width=1pt] (11, 9) .. controls(11, 10) and (9.5, 10) ..
	(9.5, 9);
	\draw[line width=1pt] (9.5, 9) to [out=-90, in=210] (10.7, 8.5);
	\draw[line width=1pt] (11.1, 8.6)
	.. controls(11.5, 8.8) and (12, 9.5) .. (12, 10)
	node[pos=1, above] {$Y_{I^{c}}$}
	node[pos=0.5, right] {$\deg d- k$};
	
	\draw[line width=1pt] (4, 5) -- (5.2, 2)
	node[pos=0, above] {$X_{I}$}
	node[pos=0.3, left] {$\deg k$};
	\draw[line width=1pt] (4.4, 2.3) -- (7.4, 2.3)
	node[pos=0.5, above] {$\deg 1$}
	node[pos=0.51, below] {$D$};
	\draw[line width=1pt] (6.8, 2) .. controls(8, 3) and (8, 4) ..(8, 5)
	node[pos=0.7, right] {$\deg d-(k+1)$}
	node[pos=1, above] {$X_{I^{c}}$};
	
	\draw[->] (2.3, 7) -- (3.3, 6);
	\draw[->] (9, 7) -- (8.2, 6);
\end{tikzpicture}
\normalsize
\end{center}

\begin{proposition}
\label{FlatLimit}
Every $(\gamma,\vec{c})$-bridge is GIT-semistable at the wall $\varphi (I, \cdot )^{-1} (k)$.
\end{proposition}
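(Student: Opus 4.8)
The plan is to exhibit each $(I,k)$-bridge as a flat limit, under a one-parameter subgroup, of a pointed curve that is already known to be semistable at the wall, in such a way that semistability passes to the limit. The principle that makes this work is elementary: since $\SL(d+1)$ has no characters, semistability of a point $z$ is equivalent to the origin not lying in the closure of the $\SL(d+1)$-orbit of any lift $\hat z$ of $z$ to the affine cone of a very ample power of the linearization. Consequently, if $z$ is semistable and $\lambda$ is a one-parameter subgroup with Hilbert--Mumford index $\mu_\lambda(z)=0$ for the linearization in question, then $\hat z_0:=\lim_{t\to 0}\lambda(t)\cdot\hat z$ is a nonzero lift of $z_0:=\lim_{t\to 0}\lambda(t)\cdot z$ lying in $\overline{\SL(d+1)\cdot\hat z}$; hence the origin is absent from $\overline{\SL(d+1)\cdot\hat z_0}$, and $z_0$ is semistable. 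It is crucial that $\mu_\lambda(z)=0$ here: for $\mu_\lambda(z)<0$ the limit can fail to be semistable.

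Now fix the wall linearization $(\gamma,\vec c)\in\varphi(I,\cdot)^{-1}(k)=\varphi(I^c,\cdot)^{-1}(d-1-k)$, so that $c_I=1+k(1-\gamma)$; this identity is the only use of the wall hypothesis. Let $(X,p_1,\dots,p_n)$ with $X=X_I\cup D\cup X_{I^c}$ be an $(I,k)$-bridge, with $a=X_I\cap D$ and $b=D\cap X_{I^c}$. By Corollary \ref{cor-geomofstablecurves} the spans $\langle X_I\rangle\cong\PP^k$ and $\langle X_{I^c}\rangle\cong\PP^{d-1-k}$ are disjoint and $D$ is the line $\overline{ab}$, so after a coordinate change we may take $V:=\langle X_I\rangle=V(x_{k+1},\dots,x_d)$ and $\langle X_{I^c}\rangle=V(x_0,\dots,x_k)$, with $\lambda_V$ the flag one-parameter subgroup of Proposition \ref{Prop:Flag1PS} for this $V$. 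I would then proceed in three steps. Step 1: construct a pointed curve $(C,p_1,\dots,p_n)$ of the form $C=X_I\cup_a C'$, where $X_I$ is the curve from the bridge and $C'$ is a connected degree $d-k$ curve through $a$ meeting $V$ transversally only at $a$ and carrying the $I^c$-marked points, chosen so that $\lim_{t\to 0}\lambda_V(t)\cdot(C,p_1,\dots,p_n)=(X,p_1,\dots,p_n)$; the point is that the lower convex hull of the weights of $\lambda_V$ forces the limit cycle of $C'$ to be $\overline{ab}$ together with a degree $d-1-k$ curve in $\langle X_{I^c}\rangle$, and one picks $C'$ (which amounts to choosing a lift of $X_{I^c}$ and of its marked points under projection from $a$) so that this limit, with its marked points, is exactly $D\cup X_{I^c}$. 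Step 2: verify that $C$ is GIT-stable, equivalently semistable, for a linearization $(\gamma',\vec c\,')$ in the adjacent chamber $\varphi(I,\cdot)^{-1}(\{x<k\})$ off all other walls; this follows from Corollary \ref{DegreeOfComponents} applied in the $-$ chamber together with condition (3) in the definition of $(I,k)$-bridge, since $\deg X_I=k=\sigma_-(I)$, $\deg C'=d-k=\sigma_-(I^c)$, and every connected subcurve of $C$ then acquires exactly the degree demanded by $\mathcal{Z}_{\gamma',\vec c\,'}$-stability (Theorem \ref{ModularDescription}). By the general theory of variation of GIT, $U_{d,n}^{ss}(\gamma',\vec c\,')\subseteq U_{d,n}^{ss}(\gamma,\vec c)$, so $C$ is semistable at the wall. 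Step 3: compute $\mu_{\lambda_V}(C,p_1,\dots,p_n)=0$ via Proposition \ref{Prop:Flag1PS}: here $C(V)=X_I$, $\deg(C'\cap V)=1$ and $\sum_{p_i\in V}c_i=c_I$, so the stability inequality of Proposition \ref{Prop:Flag1PS} reads $\gamma(2k+1)+c_I\le(k+1)(1+\gamma)$, and substituting $c_I=1+k(1-\gamma)$ turns this into an equality, i.e.\ the Hilbert--Mumford index vanishes. Combining Steps 1--3 with the principle of the first paragraph gives semistability of $(X,p_1,\dots,p_n)$.

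The main obstacle is Step 1: producing, for an \emph{arbitrary} $(I,k)$-bridge, a curve $C'$ whose $\lambda_V$-limit together with its marked points is exactly $D\cup X_{I^c}$, while keeping $C=X_I\cup_a C'$ stable in the $-$ chamber. In effect this is the assertion that $(I,k)$-bridges are precisely the flat limits of one-node stable curves as the wall is approached from the $-$ side, and I expect it to need a short dimension count showing that the family of admissible $C'$ surjects onto the stratum of possible limit cycles (handling the case of reducible $X_{I^c}$ by taking $C'$ reducible accordingly); once a single such $C'$ is produced, Steps 2 and 3 are bookkeeping with the $\sigma$-functions. A secondary point to handle carefully is the affine-cone principle itself: limits of semistable points along arbitrary one-parameter subgroups need not be semistable, so the vanishing $\mu_{\lambda_V}(C,\vec p)=0$ in Step 3 is indispensable, and it is exactly this that dictates the reducible shape $C=X_I\cup_a C'$ with $X_I\subset V$ (so that the term $2\deg X_I$ enters the flag estimate) rather than a smooth rational normal curve, for which one would only get $\mu_{\lambda_V}<0$.
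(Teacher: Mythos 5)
Your overall strategy is the same as the paper's: present the bridge as the $t\to 0$ limit, under a flag one-parameter subgroup with vanishing Hilbert--Mumford index at the wall, of a one-nodal curve that is stable in an adjacent chamber (the paper un-projects the $I$-side to land in the $+$ chamber, you un-project the $I^c$-side to land in the $-$ chamber; this is immaterial). Your first-paragraph limit principle, your Step 2 bookkeeping, and your Step 3 computation that equality holds in Proposition \ref{Prop:Flag1PS} exactly when $c_I=1+k(1-\gamma)$ are all correct, and the last of these is actually spelled out more explicitly than in the paper.

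The genuine gap is the one you flag yourself: Step 1, the existence of the smoothing $C'$ whose $\lambda_V$-limit, \emph{with its marked points}, is exactly $D\cup X_{I^c}$, and which is moreover stable in the adjacent chamber. A ``short dimension count'' is not the right tool here: at best it would show that the family of candidate $C'$ dominates the stratum of limit cycles, whereas you need to hit an \emph{arbitrary} (possibly highly degenerate) bridge, and you would still have to verify chamber-stability of the $C'$ you produce. The paper resolves both issues at once by quoting its own projection morphism from \S6.2: since $\pi_q$ is a proper birational morphism between the relevant GIT quotients, it is surjective, so the tail of the bridge admits a stable un-projection $Y_I$ with an extra marked point $q$ of weight $1-\epsilon$ --- stability in the adjacent chamber comes for free because $Y_I$ is taken to be a stable point of the smaller quotient. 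The identification of the limit cycle is then done not by a convex-hull heuristic but by an explicit computation: the rational map $\mathbb{C}\times Y_I\dashrightarrow\PP^d$, $(t,r)\mapsto\lambda(t)\cdot r$, is regular away from $(0,q)$, and blowing up that point produces a flat family whose special fiber is $\pi_q(Y_I)$ together with the line $D$, after which $X_{I^c}$ (constant in the family, since $q$ is fixed by $\lambda$) is glued on at $q$. To complete your write-up you should replace the dimension count by this surjectivity argument applied to the projection $\overline{M}_{0,n-i+1}(\mathcal{Z}_{\gamma,\vec c_{I^c},1-\epsilon})\to\overline{M}_{0,n-i+1}(\mathcal{Z}_{\gamma,\vec c_{I^c},\gamma+\epsilon})$ and carry out the analogous blow-up computation for your $C'$.
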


\begin{proof}
Let $(X, p_1 , \ldots , p_n )$ be a $(\gamma,\vec{c})$-bridge.  It suffices to construct a $(\gamma,\vec{c},+)$-stable curve $(Y , q_1 , \ldots , q_n )$ and a 1-PS $\lambda$ such that $$\mu_{\lambda} (Y, q_1 , \ldots , q_n ) = 0 \text{    and} $$
$$ \lim_{t \to 0} \lambda (t) \cdot (Y, q_1 , \ldots , q_n ) = (X, p_1 , \ldots , p_n ) .$$
Let $(X_I , p_1 , \ldots p_m , p)$ denote the tail of $X$ labeled by points in $I$, where $p$ is the ``attaching point''.  Note that, by Proposition \ref{GluingMap} and the fact that $( \gamma , \vec{c} )$ does not lie on any walls other than $\varphi (I, \cdot )^{-1} (k)$, $X_I$ is stable for the linearization $(\gamma ,c_1,\ldots,c_m,\gamma - \epsilon )$.  Because the projection map is proper and birational, there is a curve $(Y_I,q_1,\ldots ,q_m,q)$, stable for the linearization $(\gamma ,c_1,\ldots ,c_m,1-\epsilon )$, such that $\pi_q ( Y_I ) = X_I$.

Choose coordinates so that the span of $Y_I$ is $V( x_{k+2} , \ldots , x_d )$ and $q = V( x_0 , \ldots x_k , \widehat{x_{k+1}} , x_{k+2} , \ldots , x_d )$.  Now, let $\lambda$ be the 1-PS that acts with weights $(0, \ldots , 0,1, \ldots ,1)$, where the first $k+1$ weights are all zero.  Let $i : Y_I \hookrightarrow \PP^d$ be the inclusion and consider the rational map
$$ U := \mathbb{C} \times Y_I \dashrightarrow \PP^d $$
given by $(t,r) \mapsto \lambda (t) \cdot i(r)$.  Note that this map is regular everywhere except the point $(0,q)$.  If we blow up $U$ at this point, we obtain a regular map $ \tilde{U} \to \PP^d $ whose special fiber is the union of $\pi_q ( Y_I ) = X_I$ and a line.  Since the image of the point $q$ is constant in this family, we may glue on $X_{I^c}$ to obtain a family of connected degree $d$ curves.  By Proposition \ref{GluingMap}, $Y = Y_I \cup X_{I^c}$ is a $(\gamma,\vec{c},+)$-stable curve.  Note that, since $(Y, q_1 , \ldots , q_n )$ is $(\gamma , \vec{c})$-semistable, but its limit under the 1-PS $\lambda$ is not isomorphic to itself, we must have $\mu_{\lambda} (Y, q_1 , \ldots , q_n ) = 0$.  It follows that $(X, p_1 , \ldots , p_n )$ is semistable.
\end{proof}

We will see that the $(\gamma,\vec{c})$-bridges are the only ``new'' curves that appear at the wall.

\begin{proposition}
\label{WallCrossing}
A pointed curve $(X, p_1 , \ldots , p_n ) \in U_{d,n}$ is stable for the linearization $(\gamma,\vec{c},0)$ if and only if it is stable for the linearization $(\gamma,\vec{c},+)$ (equivalently, $(\gamma,\vec{c},-)$) and does not contain a tail labeled by the points in $I$ or $I^c$.  It is strictly semistable if and only if it contains a tail labeled by the points in $I$ or $I^c$, and is either $(\gamma,\vec{c},+)$-stable, $(\gamma,\vec{c},-)$-stable, or a $(\gamma,\vec{c})$-bridge.  Moreover, the $(\gamma,\vec{c})$-bridges are exactly the strictly semistable curves with closed orbits.
\end{proposition}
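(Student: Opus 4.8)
The plan is to reduce everything to three inputs: the general variation-of-GIT comparison of the semistable locus at the wall with those of the two neighbouring chambers, the degree-of-tails calculus of Section \ref{section:degrees}, and the explicit semistable limit of Proposition \ref{FlatLimit}. To begin, since neither neighbouring chamber contains strictly semistable points (Corollary \ref{NoSemistablePoints}), the semistable and stable loci $U_{d,n}^{ss}(I,k,\pm)=U_{d,n}^{s}(I,k,\pm)$ agree, and moving along a segment through $(\gamma,\vec c)$ the standard VGIT inclusions give
\[
U_{d,n}^{s}(I,k,0)=U_{d,n}^{s}(I,k,+)\cap U_{d,n}^{s}(I,k,-),\qquad U_{d,n}^{ss}(I,k,0)\supseteq U_{d,n}^{s}(I,k,+)\cup U_{d,n}^{s}(I,k,-).
\]
So it suffices to (a) compare $(I,k,+)$- with $(I,k,-)$-stability, and (b) identify the ``new'' semistable points, those lying in $U_{d,n}^{ss}(I,k,0)$ but stable for neither chamber.

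For (a) I would use that, by Proposition \ref{DegreeOfTails}, Corollary \ref{DegreeOfComponents} and Theorem \ref{ModularDescription}, a pointed curve is stable for a chamber linearization exactly when it has the geometry of Corollary \ref{cor-geomofstablecurves} and the degree of every connected subcurve $E$ equals $d-\sum_Y\sigma(Y)$, the sum over the connected components of $\overline{X\setminus E}$ (each of which is automatically a tail). Since $\sigma_+$ and $\sigma_-$ agree on every subset of $[n]$ other than $I$ and $I^c$, these two systems of conditions coincide precisely on curves containing no tail labelled exactly by $I$ or by $I^c$; such a curve is therefore $(I,k,+)$-stable iff it is $(I,k,-)$-stable, hence (by the displayed equality) iff it is $(I,k,0)$-stable. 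Conversely, a curve containing an $I$-tail $E$ has $\deg E=\sigma_+(I)\neq\sigma_-(I)$, so it is stable on at most one side and hence not $(I,k,0)$-stable, though it lies in $U_{d,n}^{ss}(I,k,0)$ whenever it is stable on one side. This already yields the stability assertion, and identifies the strictly semistable curves that happen to be $(I,k,\pm)$-stable as precisely the $(I,k,\pm)$-stable curves carrying an $I$- or $I^c$-tail.

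The main obstacle is (b): proving a priori that every new semistable point is an $(I,k)$-bridge. Given $(X,\vec p)\in U_{d,n}^{ss}(I,k,0)$ unstable for both chambers, I would choose destabilizing $1$-PS $\lambda_\pm$ and exploit the linearity of the Hilbert--Mumford index in the linearization: since $L_+-L_-$ is a positive multiple of the wall-normal direction (increasing $c_I$) and $\mu^{L_0}_{\lambda_\pm}(X,\vec p)\le 0$, the weights $e_{\lambda_+}(p_i)$ for $i\in I$ (resp.\ $e_{\lambda_-}(p_i)$ for $i\in I^c$) get pinned against $\sum r_j$; unwinding this via Proposition \ref{Prop:Flag1PS} forces $X$ to contain a connected subcurve spanning a $\PP^k$ and carrying exactly the points of $I$ and, symmetrically, one spanning $\PP^{d-k-1}$ carrying exactly $I^c$. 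Corollary \ref{DegreeOfComponents} then pins the leftover component to a single line with no interior marked points joining the $I$-part (degree $k$) to the $I^c$-part (degree $d-k-1$), and restricting the numerical criterion to $1$-PS supported on the two spans forces the prescribed internal structure of $X_I$ and $X_{I^c}$, so $(X,\vec p)$ is an $(I,k)$-bridge. The reverse inclusion is Proposition \ref{FlatLimit}, together with the remark that an $(I,k)$-bridge carries a $1$-PS of $\SL(d+1)$ in its stabilizer (the diagonal one acting as a scalar on $\langle X_I\rangle$ and on $\langle X_{I^c}\rangle$ and nontrivially on the bridge), so it is never $(I,k,0)$-stable, and it is stable for neither chamber because Corollary \ref{DegreeOfComponents} leaves no room for a degree-$1$ bridge between an $I$- and an $I^c$-part. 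Combining (a) and (b) gives both classifications: a curve is $(I,k,0)$-stable iff stable on both sides iff stable on one side with no $I/I^c$-tail; and it is strictly semistable iff it is $(I,k,\pm)$-stable with an $I/I^c$-tail or an $(I,k)$-bridge — noting that an $(I,k)$-bridge itself contains the $I$-labelled tail $X_I$, so the tail clause holds uniformly.

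Finally, for the closed-orbit statement: if a strictly semistable $(X,\vec p)$ has closed orbit, then picking a nontrivial $\lambda$ with $\mu^{L_0}_\lambda(X,\vec p)=0$ (which exists because it is not stable) the limit $\lim_{t\to 0}\lambda(t)\cdot(X,\vec p)$ lies in its orbit and is fixed by $\lambda$, so the stabilizer is positive-dimensional; by Corollary \ref{cor-geomofstablecurves} and the automorphism analysis of Proposition \ref{NoAutomorphisms}, combined with the wall relation $\sigma(I)+\sigma(I^c)=d-1$ and Corollary \ref{SmoothWeight}, the only such curves are the $(I,k)$-bridges. Conversely an $(I,k)$-bridge has closed orbit: it is a semistable limit of $(I,k,+)$-stable curves (Proposition \ref{FlatLimit}) and, by the symmetry exchanging $I$ with $I^c$, also of $(I,k,-)$-stable curves, while any further degeneration would have to contract the bridge — forcing $X$ to span a proper subspace, hence unstable by Proposition \ref{prop-degcurveunstable} — or drag marked points onto a node, violating Proposition \ref{SingularWeight}. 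I expect the genuinely delicate point throughout to be controlling arbitrary (rather than flag-type) destabilizing $1$-PS in step (b), which I would handle by rerunning the inductive argument of Proposition \ref{StablePoints} in the wall-crossing situation.
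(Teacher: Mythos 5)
Your overall architecture (standard VGIT inclusions, forward direction via Proposition \ref{FlatLimit}, trichotomy for the converse, closed orbits via stabilizer dimension) matches the paper, and your part (a) is essentially the paper's first paragraph. The genuine gap is in your step (b), which you yourself flag as the main obstacle: you propose to classify the ``new'' semistable points by choosing destabilizing one-parameter subgroups $\lambda_{\pm}$ for the two chambers, invoking linearity of the Hilbert--Mumford index, and then ``unwinding via Proposition \ref{Prop:Flag1PS}'' --- but $\lambda_{\pm}$ need not be of flag type, so Proposition \ref{Prop:Flag1PS} does not apply to them, and your proposed fix (rerunning the induction of Proposition \ref{StablePoints} in the wall-crossing setting) is not carried out and is substantially harder than what is needed. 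As written, the assertion that the pinned weights ``force $X$ to contain a connected subcurve spanning a $\PP^k$ carrying exactly the points of $I$'' is a statement of the desired conclusion, not an argument.

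The paper closes this step without ever touching arbitrary one-parameter subgroups. The proof of Proposition \ref{DegreeOfTails} uses only the flag 1-PS to get the lower bound $\deg(E)\ge\varphi(E)$ for a tail $E$ of a \emph{semistable} curve, and additivity plus $\deg X=d$ for the upper bound; this argument applies verbatim at the wall to every tail whose marking set is not $I$ or $I^c$ (since $(\gamma,\vec c)$ lies on no other wall, $\varphi$ is non-integral there), so those degrees are still pinned to $\sigma$. For a tail labeled exactly by $I$ one gets $\deg\ge k$ and $\deg(\text{complement})\ge d-k-1$, hence $\deg\in\{k,k+1\}$. The trichotomy is then pure degree bookkeeping on the dual graph: if no tail is labeled by $I$ or $I^c$ the curve is chamber-stable; if there is such a tail but no subcurve separating an $I$-tail from an $I^c$-tail, the curve is stable for one of the two chambers; and in the remaining case the separating subcurve is forced to have degree exactly $1$ with the two tails of degrees $k$ and $d-(k+1)$, i.e.\ the curve is an $(I,k)$-bridge. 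Your closed-orbit paragraph is likewise a sketch where the paper argues more simply: among the (now classified) semistable curves, bridges have $1$-dimensional stabilizers and all others have trivial stabilizers, and a strictly semistable point with closed orbit must have positive-dimensional stabilizer while a non-closed orbit degenerates to one with strictly larger stabilizer; your degeneration analysis (``any further degeneration must contract the bridge or drag points onto a node'') would need separate justification.
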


\begin{proof}
We first show that each of the curves above is (semi)stable.  It is a standard fact from variation of GIT that, if a curve is stable for both linearizations $(\gamma,\vec{c},+)$ and $(\gamma,\vec{c},-)$, then it is stable for the linearization $(\gamma,\vec{c},0)$ as well.  By assumption, the only wall that $( \gamma , \vec{c} )$ lies on is $\varphi (I, \cdot )^{-1} (k) = \varphi (I^c , \cdot )^{-1} (d-1-k)$, so any curve that does not contain a tail labeled by the points in $I$ will be stable for one of these linearizations if and only if it is stable for the other.  Similarly, if a curve is stable for either linearization $(\gamma,\vec{c},+)$ or $(\gamma,\vec{c},-)$, then it is semistable for the linearization $(\gamma,\vec{c},0)$.  It therefore suffices to show that $(\gamma,\vec{c})$-bridges are GIT-semistable, but this was shown in Proposition \ref{FlatLimit}.

To see the converse, let $(X, p_1 , \ldots , p_n ) \in U_{d,n}$ be semistable for the linearization $(\gamma,\vec{c},0)$.  Notice that the degree of each tail $Y \subset X$ is completely determined by $\sigma$ unless $Y$ is labeled by points in $I$ or $I^c$.  We therefore see that, if $X$ contains no tails labeled by points in $I$ or $I^c$, then for any connected subcurve $E \subset X$ we have
$$ \deg (E) = d - \sum \sigma (Y) $$
and $(X, p_1 , \ldots , p_n )$ is a $(\gamma,\vec{c},+)$-stable curve.

Similarly, suppose that $X$ contains a subcurve $E$ such that $\overline{X \backslash E}$ contains a connected component $X_I$ labeled by $I$ but no connected component labeled by $I^c$.  Then the degree of $X_I$ is either $k$ or $k+1$, and thus either
$$ \deg (E) = d-k - \sum \sigma (Y) $$
or
$$ \deg (E) = d-(k+1) - \sum \sigma (Y) $$
where the sum is over all connected components $Y \subset \overline{X \backslash E}$ other than $X_I$.  It follows that $(X, p_1 , \ldots , p_n )$ is either $(\gamma,\vec{c},+)$-stable or $(\gamma,\vec{c},-)$-stable.

The remaining case is where $X$ contains a component $E$ such that $\overline{X \backslash E}$ contains a connected component $X_I$ labeled by the points in $I$ and a connected component $X_{I^c}$ labeled by the points in $I^c$.  Since $\deg X_I \geq k+1, \deg X_{I^c} \geq d-(k+1)$, and $\deg E \geq 1$, we see that the only possibility is if all three inequalities hold.  Thus, $E$ is a degree 1 subcurve of $X$ such that $\vert E \cap \overline{X \backslash E} \vert = 2$, and $(X, p_1 , \ldots , p_n )$ is a $(\gamma,\vec{c})$-bridge.

Finally, note that if a semistable curve does not have a closed orbit, then it degenerates to a semistable curve with higher-dimensional stabilizer.  Furthermore, a strictly semistable curve with closed orbit cannot have a 0-dimensional stabilizer.  Since $(\gamma,\vec{c})$-bridges have 1-dimensional stabilizers and all other semistable curves have 0-dimensional stabilizers, we see that the $(\gamma,\vec{c})$-bridges must be precisely the strictly semistable curves with closed orbits.
\end{proof}

We can restate the results of Proposition \ref{WallCrossing} in the following way.  Each of the maps in the diagram
$$\xymatrix{
U_{d,n} \gquot_{\gamma,\vec{c},+} \SL(d+1) \ar@{<.>}[rr] \ar[rd] & & U_{d,n} \gquot_{\gamma,\vec{c},-} \SL(d+1) \ar[ld] \\
 & U_{d,n} \gquot_{\gamma,\vec{c},0} \SL(d+1) &  }$$
restricts to an isomorphism away from the image of $D_I \subset \overline{M}_{0,n}$.  If $k \neq 0,d-1$, then along the image of this divisor, the maps restrict to the following:
$$\xymatrix{
\overline{M}_{0,i+1} ( \mathcal{Z}_{\gamma , \vec{c}_I , 1- \epsilon} ) \times \overline{M}_{0,n-i+1} ( \mathcal{Z}_{\gamma , \vec{c}_{I^c} , \gamma + \epsilon}) \ar[r]^{\phantom{aaaaaaaaaa}\Gamma_i} \ar[d]^{(\pi_{i+1} , id)} & U_{d,n} \gquot_{\gamma,\vec{c},+} \SL(d+1) \ar[d] \\
\overline{M}_{0,i+1} ( \mathcal{Z}_{\gamma , \vec{c}_I , \gamma + \epsilon} ) \times \overline{M}_{0,n-i+1} ( \mathcal{Z}_{\gamma , \vec{c}_{I^c} , \gamma + \epsilon}) \ar[r] & U_{d,n} \gquot_{\gamma,\vec{c},0} \SL(d+1) \\
\overline{M}_{0,i+1} ( \mathcal{Z}_{\gamma , \vec{c}_I , \gamma + \epsilon} ) \times \overline{M}_{0,n-i+1} ( \mathcal{Z}_{\gamma , \vec{c}_{I^c} , 1 - \epsilon}) \ar[r]^{\phantom{aaaaaaaaaa}\Gamma_i} \ar[u]_{(id,\pi_{n-i+1})} & U_{d,n} \gquot_{\gamma,\vec{c},-} \SL(d+1) \ar[u] }$$
where the central map is obtained by gluing a line between the attaching points.

Similarly, if $k=d-1$, the maps restrict to:
$$\xymatrix{
\overline{M}_{0,i+1} ( \mathcal{Z}_{\gamma , \vec{c}_I , 1- \epsilon} ) \ar[r]^{\Gamma_i} \ar[d]^{\pi_{i+1}} & U_{d,n} \gquot_{\gamma,\vec{c},+} \SL(d+1) \ar[d] \\
\overline{M}_{0,i+1} ( \mathcal{Z}_{\gamma , \vec{c}_I , \gamma + \epsilon} ) \ar[r] & U_{d,n} \gquot_{\gamma,\vec{c},0} \SL(d+1) \\
\overline{M}_{0,i+1} ( \mathcal{Z}_{\gamma , \vec{c}_I , \gamma + \epsilon} ) \times \overline{M}_{0,n-i+1} ( \mathcal{Z}_{\gamma , \vec{c}_{I^c} , 1 - \epsilon}) \ar[r]^{\phantom{aaaaaaaaaa}\Gamma_i} \ar[u]_{(id,\cdot)} & U_{d,n} \gquot_{\gamma,\vec{c},-} \SL(d+1). \ar[u] }$$

\subsection{Quotients at the Boundary of $\Delta^\circ$}\label{section:BoundaryQuots}

There are four distinct types of top-dimensional boundary walls, corresponding to when $\gamma = 0$, $\gamma = 1$, $c_i = 0$ for some $i$, and $c_i = 1$ for some $i$.  In this section, we consider each in turn.

\begin{corollary}
\label{OnePointWall}
Suppose $c_1 = 1 - \epsilon$ for $\epsilon \ll 1$.  Then after replacing GIT quotients by their normalizations, the map induced by passing to the GIT wall $c_1 = 1$ is a projection map:
$$\xymatrix{
U_{d,n} \gquot_{\gamma , c_1 , \ldots , c_n} \SL(d+1) \ar[d]_{f} \ar[rd]^{\pi_1} & \\
U_{d,n} \gquot_{\gamma - \frac{\epsilon}{d-1} , 1, c_2 , \ldots , c_n} \SL(d+1) \ar[r]^{\cong}_g & U_{d-1,n} \gquot_{\gamma , \gamma - \epsilon, c_2 , \ldots , c_n} \SL(d)  }.$$
\end{corollary}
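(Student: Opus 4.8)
The plan is to realize the lower‑left quotient as the GIT quotient at the exterior wall $\{c_{1}=1\}$, to identify that quotient with the target of the projection morphism $\pi_{1}$ via an explicit isomorphism, and then to check that the resulting triangle commutes by comparing both maps on the dense open locus $M_{0,n}$. Since $\epsilon\ll 1$ we have $\epsilon<\gamma$, hence $c_{1}=1-\epsilon>1-\gamma$, so the projection morphism from $p_{1}$ of the previous subsection applies: it is a birational morphism onto $U_{d-1,n}\gquot_{(\gamma,\,c_{1}-(1-\gamma),\,c_{2},\dots,c_{n})}\SL(d)$, where $c_{1}-(1-\gamma)=\gamma-\epsilon$. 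On the other hand $(\gamma-\epsilon,1,c_{2},\dots,c_{n})$ lies on the face $\{c_{1}=1\}$ of $\overline{\Delta^{\circ}}$, and for $\epsilon$ small the segment joining it to $(\gamma,1-\epsilon,\vec{c})$ crosses no other wall, so by general VGIT $U_{d,n}^{ss}(\gamma,1-\epsilon,\vec{c})\subseteq U_{d,n}^{ss}(\gamma-\epsilon,1,\vec{c})$, which yields the wall‑crossing morphism $w$ appearing in the statement.

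Next I would analyze the boundary linearization $L_{0}:=(\gamma-\epsilon,1,c_{2},\dots,c_{n})$. By Corollary \ref{SmoothWeight} and Proposition \ref{SingularWeight}, on every $L_{0}$‑semistable curve $p_{1}$ is a smooth point and the unique marked point there, and every $L_{0}$‑semistable point is strictly semistable (\S\ref{section:StableExists}). Hence projection from $p_{1}$ still defines a morphism $\bar{\pi}_{1}\colon U_{d,n}\gquot_{L_{0}}\SL(d+1)\to U_{d-1,n}\gquot_{(\gamma,\gamma-\epsilon,\vec{c})}\SL(d)$, by the argument proving $\pi_{1}$ is a morphism (which nowhere uses $c_{1}<1$; one only replaces strict inequalities by non‑strict ones). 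I would also construct a candidate inverse $\Psi$: to a semistable $(\bar{X};\bar{p}_{1},\dots,\bar{p}_{n})\subset\PP^{d-1}\subset\PP^{d}$ one attaches a line $L$ to $\bar{X}$ at $\bar{p}_{1}$, transverse to the span $\PP^{d-1}$ of $\bar{X}$, and places $p_{1}$ on $L$. The result is independent of these choices up to $\SL(d+1)$ — the unipotent radical of the stabilizer of $\PP^{d-1}$ acts transitively on lines through $\bar{p}_{1}$ transverse to $\PP^{d-1}$, and the torus fixing $L$ moves $p_{1}$ — and the construction can be done in families, so it defines a morphism $\Psi\colon U_{d-1,n}\gquot_{(\gamma,\gamma-\epsilon,\vec{c})}\SL(d)\to U_{d,n}\gquot_{L_{0}}\SL(d+1)$.

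The heart of the argument is to show $\bar{\pi}_{1}$ and $\Psi$ are mutually inverse. One composite is immediate, since projecting $\bar{X}\cup_{\bar{p}_{1}}L$ from $p_{1}$ recovers $(\bar{X};\bar{p}_{1},\dots,\bar{p}_{n})$. For the other, given an $L_{0}$‑semistable $(X,p_{1},\dots,p_{n})$ I would conjugate so that $p_{1}=(1,0,\dots,0)$ and apply the one‑parameter subgroup $\lambda_{V}$ of Proposition \ref{Prop:Flag1PS} with $V=\{p_{1}\}$; a Chow‑limit computation of the kind carried out in Proposition \ref{FlatLimit} shows
\[ \lim_{t\to 0}\lambda_{V}(t)\cdot(X,p_{1},\dots,p_{n})=\Psi\bigl(\bar{\pi}_{1}(X,p_{1},\dots,p_{n})\bigr)=\bigl(\pi_{p_{1}}(X)\cup L;\ p_{1}\in L,\ \pi_{p_{1}}(p_{2}),\dots,\pi_{p_{1}}(p_{n})\bigr), \]
and this broken curve has closed orbit in the $L_{0}$‑semistable locus. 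Hence $(X,p_{1},\dots,p_{n})$ and $\Psi(\bar{\pi}_{1}(X,p_{1},\dots,p_{n}))$ have the same image in $U_{d,n}\gquot_{L_{0}}\SL(d+1)$, so $\Psi\circ\bar{\pi}_{1}=\mathrm{id}$ and $\Psi$ is an isomorphism. The semistability assertions used here — that projection preserves $L_{0}$‑semistability and that the curves above are $L_{0}$‑semistable — follow from the $e_{\lambda}$‑calculus of \S\ref{section:destab}--\S\ref{section:degrees} exactly as in the proof of Proposition \ref{StablePoints}.

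Finally, to see $w=\Psi\circ\pi_{1}$ it suffices — both being morphisms of separated schemes — to compare them on the dense open $M_{0,n}$ of the source. There $\pi_{1}$ sends a smooth pointed rational normal curve $(X,p_{1},\dots,p_{n})$ to $(\pi_{p_{1}}(X);q,\pi_{p_{1}}(p_{2}),\dots,\pi_{p_{1}}(p_{n}))$, where $q$ is the point of $\pi_{p_{1}}(X)$ cut out by the tangent direction of $X$ at $p_{1}$, after which $\Psi$ glues a line back at $q$; the limit computation above shows that $w(X,p_{1},\dots,p_{n})$, the $L_{0}$‑polystable degeneration, is exactly $\pi_{p_{1}}(X)\cup_{q}L$ with $p_{1}$ on $L$. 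The two agree, so $w=\Psi\circ\pi_{1}$. I expect the main obstacle to be the third paragraph: carrying out the Hilbert--Mumford bookkeeping and the flat‑limit computation at the boundary value $c_{1}=1$, where stability has degenerated to strict semistability and the relevant orbits acquire a one‑dimensional stabilizer, so that some care is needed to verify the limit is polystable and to pin down its orbit.
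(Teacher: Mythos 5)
Your argument is correct and follows essentially the same route as the paper: the paper identifies the boundary wall $c_1=1$ with $\varphi(\{1\},\cdot)^{-1}(0)$ and invokes Proposition \ref{FlatLimit} to see that the closed-orbit representative of each semistable class at that wall is the $(\{1\},0)$-bridge $\pi_1(X)\cup L$, then concludes via the bijection between quotient points and closed orbits — exactly the content of your one-parameter-subgroup limit and your map $\Psi$. The only difference is one of economy: the paper reuses the already-established Propositions \ref{FlatLimit} and \ref{WallCrossing} where you re-derive the polystable degeneration and spell out the inverse isomorphism explicitly.
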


\begin{proof}
If we replace all GIT quotients by their normalizations, the morphisms between them form algebraic fiber spaces. In particular, we can apply the rigidity lemma (\cite[Proposition II.5.3]{Kol96}). 

Note that the boundary wall $c_1 = 1$ is equal to the hyperplane $\varphi (\{ 1 \}, \cdot )^{-1} (0)$.  Let $X$ be a $( \gamma , \vec{c} ,-)$-stable curve.  By Proposition \ref{FlatLimit}, we see that there is a $( \gamma , \vec{c} , 0)$-semistable curve with closed orbit consisting of the projected curve $\pi_1 (X)$ together with a degree 1 tail $L$ containing $p_1$ and attached at $\pi ( p_1 )$. Conversely, all $(\gamma, \vec{c}, 0)$-semistable curves with closed orbits are of this form. Thus for such a curve $Y$, the fiber $f^{-1}(Y)$ is positive-dimensional if and only if $X \in f^{-1}(Y)$ has a unique irreducible tail of degree 1 containing $p_{1}$ and at least two more marked points on its smooth locus. Now it is easy to see that $f^{-1}(Y)$ is contracted by $\pi_{1}$. Therefore, by the rigidity lemma we have a morphism $g : U_{d,n}\gquot_{\gamma-\frac{\epsilon}{d-1}, 1, c_{2}, \cdots, c_{n}}\SL(d+1) \to U_{d-1,n}\gquot_{\gamma, \gamma-\epsilon, c_{2}, \cdots, c_{n}}\SL(d)$. 

Since the points of the GIT quotient are in bijection with the closed orbits of semistable points, it is straightforward to check that the induced horizontal map is bijective and indeed an isomorphism.
\end{proof}

\begin{proposition}
\label{GammaEqualsOne}
When $\gamma = 1$, we have the following isomorphism:
$$ U_{d,n} \gquot_{1,\vec{c}} \SL(d+1) \cong ( \PP^1 )^n \gquot_{\vec{c}} \SL(2) .$$
\end{proposition}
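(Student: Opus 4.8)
The plan is to exploit that, because $\gamma = 1$, Corollary \ref{cor:smoothcurves} (together with Corollary \ref{cor-geomofstablecurves}) forces every GIT-semistable point of $U_{d,n}$ to lie in the open $\SL(d+1)$-invariant locus $U_{d,n}^{sm}$ parametrizing $n$-pointed \emph{smooth} rational normal curves. Consequently the $(1,\vec{c})$-semistable locus of $U_{d,n}$, and hence the quotient $U_{d,n}\gquot_{1,\vec{c}}\SL(d+1)$, is computed entirely on $U_{d,n}^{sm}$. Since $\SL(d+1)$ acts transitively on smooth rational normal curves in $\PP^d$ (all being projectively equivalent), I would fix the standard one $X_0\subset\PP^d$, set $H=\operatorname{Stab}_{\SL(d+1)}(X_0)$, and observe that the Chow-factor projection exhibits $U_{d,n}^{sm}$ as the associated fiber bundle $\SL(d+1)\times_H X_0^n$, on which $H$ acts diagonally on $X_0^n=(\PP^1)^n$ through its action on $X_0$.

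The first key step is the elementary fact that GIT quotients descend along associated bundles: for a $G$-linearized line bundle $L$ on $G\times_H Z$, restriction of $G$-invariant sections to the fiber $Z$ gives an isomorphism $H^0(G\times_H Z, L^{\otimes m})^G \cong H^0(Z,(L|_Z)^{\otimes m})^H$ for all $m$, so that $(G\times_H Z)\gquot_L G \cong Z\gquot_{L|_Z} H$ with matching semistable loci. Applying this with $G=\SL(d+1)$, $Z=X_0^n$, and $L$ the restriction of the $(1,\vec{c})$-linearization yields $U_{d,n}\gquot_{1,\vec{c}}\SL(d+1)\cong X_0^n\gquot_{L|_{X_0^n}} H$.

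Next I would identify $L|_{X_0^n}$. Because $\OO_{\PP^d}(1)|_{X_0}\cong\OO_{\PP^1}(d)$, the $(\PP^d)^n$-factors of $L$ restrict on $X_0^n=(\PP^1)^n$ to $\OO(dc_1,\ldots,dc_n)$, while the Chow factor restricts over the single point $[X_0]$ to a one-dimensional representation of $H$, i.e.\ a character. Now $H$ surjects onto $\PGL(2)=\operatorname{Aut}(X_0)$ with kernel the finite central group of scalar matrices in $\SL(d+1)$, and $\PGL(2)$ has no nontrivial characters, so this character is torsion; since passing to a tensor power does not change (semi)stability, it may be discarded. The same scalar matrices act trivially on $\PP^d$, hence trivially on $X_0$ and on $(\PP^1)^n$, so the $H$-action on $(\PP^1)^n$ factors through $\PGL(2)$; as finite central kernels and component groups are immaterial for GIT, and $\SL(2)$ and $\PGL(2)$ act identically on $\PP^1$, I would conclude $X_0^n\gquot_{L|_{X_0^n}} H \cong (\PP^1)^n\gquot_{d\vec{c}}\SL(2)$, which by scale-invariance of GIT for $(\PP^1)^n$ equals $(\PP^1)^n\gquot_{\vec{c}}\SL(2)$.

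The main obstacle is the bookkeeping in the last two steps: one must check that the descent isomorphism is applied with genuinely compatible linearizations so that the semistable loci match, that the Chow-factor contribution to $L|_{X_0^n}$ really is only a finite-order character, and that replacing $H$ successively by its identity component, by $\PGL(2)$, and by $\SL(2)$ leaves stability untouched. Once these are settled, everything is consistent with the direct analysis at $\gamma=1$: by Corollary \ref{SmoothWeight} and Proposition \ref{StablePoints}, semistability of an $n$-pointed smooth rational normal curve is governed solely by the total weight of marked points colliding at a common point, which is exactly the classical Hilbert--Mumford condition defining $(\PP^1)^n\gquot_{\vec{c}}\SL(2)$.
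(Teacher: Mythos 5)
Your reduction to the fiber of the associated bundle $U_{d,n}^{sm}\cong\SL(d+1)\times_H X_0^n$ is a natural idea, but the descent step is where the argument has a genuine gap, not mere bookkeeping. The isomorphism $H^0(G\times_H Z,L^{\otimes m})^G\cong H^0(Z,(L|_Z)^{\otimes m})^H$ is correct, but it computes invariant sections over the open locus $U_{d,n}^{sm}$, whereas $U_{d,n}\gquot_{1,\vec{c}}\SL(d+1)$ is $\mathrm{Proj}$ of the invariants over the projective variety $U_{d,n}$. The complement of $U_{d,n}^{sm}$ contains divisors (the nodal locus), so an $H$-invariant section on $X_0^n$ spreads out to a $G$-invariant section on $U_{d,n}^{sm}$ that need not extend to $U_{d,n}$; restriction therefore gives only an injection of section rings, hence only the inclusion $U_{d,n}^{ss}(1,\vec{c})\cap X_0^n\subset ((\PP^1)^n)^{ss}_{\vec{c}}$ of semistable loci --- which is exactly what Corollaries \ref{cor:smoothcurves} and \ref{SmoothWeight} already say. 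The substantive converse, that every $\vec{c}$-semistable configuration of (possibly colliding) points on a rational normal curve is $(1,\vec{c})$-semistable in $U_{d,n}$ against \emph{all} one-parameter subgroups of $\SL(d+1)$ and not just those preserving $X_0$, is not supplied by your argument and is not cheap: with $\sum c_i=2$ the point configuration by itself is typically unstable for $\SL(d+1)$ acting on $(\PP^d)^n$, so one cannot simply add Chow-semistability of the rational normal curve to semistability of the points; the verification genuinely couples curve and points, as in the induction proving Proposition \ref{StablePoints}, which moreover assumes $\gamma<1$ and distinct points and so does not apply here.

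The paper closes precisely this gap by a different device: it produces the map in the opposite direction, $(\PP^1)^n\gquot_{\vec{c}}\SL(2)\to U_{d,n}\gquot_{1,\vec{c}}\SL(d+1)$, by observing that the morphism of Proposition \ref{prop:Hassett} factors through every Hassett space $\overline{M}_{0,\vec{c'}}$ with $\vec{c'}>\vec{c}$ and hence through their direct limit, which is $(\PP^1)^n\gquot_{\vec{c}}\SL(2)$. If you wish to keep the associated-bundle framework, you must either prove directly via Proposition \ref{NumericalCriterion} that semistable configurations on $X_0$ are semistable in $U_{d,n}$ at $\gamma=1$, or import the paper's limiting argument to obtain the reverse inclusion of semistable loci. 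Your handling of the Chow-factor character and of the passage from $H$ to $\PGL(2)$ to $\SL(2)$ is fine and would be a valid finish once the semistable loci are matched.
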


\begin{proof}
By Corollary \ref{cor:smoothcurves}, every GIT-stable curve is smooth, and by Corollary \ref{SmoothWeight}, at most half of the total weight may collide at a marked point.  We therefore have a map
$$ U_{d,n} \gquot_{1,\vec{c}} \SL(d+1) \to ( \PP^1 )^n \gquot_{\vec{c}} \SL(2) .$$
On the other hand, note that Kapranov's morphism $\overline{M}_{0,n} \to \overline{M}_{0,\vec{c}+\epsilon} \to (\PP^{1})^{n}\gquot_{\vec{c}}\SL(2)$ is a composition of divisorial contractions. Thus one may run the same argument as in Proposition \ref{prop:Hassett} (and in \cite[Lemma 4.6]{Fak09}), thinking of $( \PP^1 )^n \gquot_{\vec{c}} \SL(2)$ as an analogue of the Hassett space $\overline{M}_{0, \vec{c} }$ when $\sum_{i=1}^n c_i = 2$, to see that there is a map $f: ( \PP^1 )^n \gquot_{\vec{c}} \SL(2) \to U_{d,n} \gquot_{1,\vec{c}} \SL(d+1)$.
\end{proof}

With Corollary \ref{OnePointWall} and Proposition \ref{GammaEqualsOne}, we now have a complete description of all of the boundary walls of the GIT cone $\Delta^\circ$.  If $c_i = 1$ for some $i$, then the corresponding map is a projection map.  If $\gamma = 1$, then the quotient is isomorphic to $( \PP^1 )^n \gquot_{\vec{c}} \SL(2)$.  On the other hand, if $c_i = 0$ for some $i$, then the corresponding map is a forgetful map, whereas if $\gamma = 0$, the quotient is isomorphic to the spaces $V_{d,n} \gquot_{\vec{c}} \SL(d+1)$ studied in \cite{NoahCB}.\footnote{In the latter two statements, the line bundles in question are only semi-ample rather than ample, and hence by Mumford's definition the corresponding GIT quotients are quasi-projective rather than projective.  If, however, one defines the GIT quotient to be Proj of the invariant section ring, then these statements are fine.}

\subsection{Behavior of Wall-Crossing Maps}\label{section:wallcrossing}

By the above diagram, we also have a nice description of wall-crossing behavior along the interior walls.

\begin{corollary}
\label{DivisorialContraction}
The morphism
$$U_{d,n} \gquot_{\gamma,\vec{c},+} \SL(d+1) \rightarrow U_{d,n} \gquot_{\gamma,\vec{c},0} \SL(d+1)$$
contracts a divisor if and only if $3 \leq \vert I \vert \leq n-2$ and $k=0$.  Similarly, the morphism
$$U_{d,n} \gquot_{\gamma,\vec{c},-} \SL(d+1) \rightarrow U_{d,n} \gquot_{\gamma,\vec{c},0} \SL(d+1)$$
contracts a divisor if and only if $2 \leq \vert I \vert \leq n-3$ and $k=d-1$.
\end{corollary}

\begin{proof}
This follows directly from the diagram above.  Because the map restricts to an isomorphism away from the image of $D_{I,I^c} \subset \overline{M}_{0,n}$, the only divisor that could be contracted by the map is the image of this divisor. In the diagram above, however, which details the restriction of this map to this divisor, all of the restricted maps are birational unless $k=0$ and $3 \leq \vert I \vert \leq n-2$ or $k=d-1$ and $2 \leq \vert I \vert \leq n-3$.
\end{proof}

\begin{corollary}
\label{Flips}
If $k \neq 0, d-1$, then the rational map
$$U_{d,n} \gquot_{\gamma,\vec{c},+} \SL(d+1) \dashrightarrow U_{d,n} \gquot_{\gamma,\vec{c},-} \SL(d+1)$$
either induces a morphism on the normalizations, its inverse induces a morphism on the normalizations, or it is a flip.
\end{corollary}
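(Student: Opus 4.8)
The plan is to read off all three possibilities from the general theory of variation of GIT, using Corollary~\ref{DivisorialContraction} to control the codimension of the loci that are modified. By \cite{Thaddeus,DH}, the rational map in question is $\psi_-^{-1}\circ\psi_+$, where $\psi_\pm\colon U_{d,n}\gquot_{I,k,\pm}\SL(d+1)\to U_{d,n}\gquot_{I,k,0}\SL(d+1)$ are the birational projective morphisms in the triangle displayed above, each an isomorphism over the stable locus of the wall quotient. First I would dispose of the degenerate cases: if $\psi_-$ is an isomorphism then $\psi_-^{-1}\circ\psi_+$ is a morphism, so the wall-crossing map is regular; symmetrically, if $\psi_+$ is an isomorphism the inverse map is regular. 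Together with the remaining case --- neither $\psi_+$ nor $\psi_-$ is an isomorphism --- these are exhaustive, so it suffices to treat that case and show the map is a flip in the Mori-theoretic sense.

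So suppose neither $\psi_\pm$ is an isomorphism. The next step is to check they are both \emph{small} contractions. By Proposition~\ref{WallCrossing} the locus where $\psi_\pm$ fails to be an isomorphism lies inside the image of the boundary divisor $D_I\subset\overline{M}_{0,n}$, and the diagrams displayed after that proposition show that, when $k\neq 0,d-1$, the restriction of $\psi_\pm$ to this divisor is built out of projection maps, which are birational morphisms; hence $\psi_\pm$ contracts no divisor --- this is exactly Corollary~\ref{DivisorialContraction} in the range $k\neq 0,d-1$. A non-isomorphic birational projective morphism contracting no divisor is a small contraction, so $\psi_-^{-1}\circ\psi_+$ is an isomorphism in codimension one; concretely, it restricts to an isomorphism between the two chamber quotients with the image of $D_I$ removed, a locus whose complement has codimension $\geq 2$ on each side.

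Finally I would produce the polarization exhibiting the flip. Write the two chamber linearizations near the wall as $L_\pm=L_0\pm\epsilon M$ for a $\QQ$-linearization $M$ on $U_{d,n}$; after replacing by a power, $M$ descends (Kempf descent applies since the chamber quotients have finite stabilizers) to line bundles $M_\pm$ on the two chamber quotients. The descent of $L_0$ to each side equals $\psi_\pm^\ast$ of the ample bundle defining the wall quotient, while the descent of $L_\pm$ is ample; by additivity of descent one has $M_+=\tfrac1\epsilon(L_+^{\mathrm{desc}}-\psi_+^\ast L_0^{\mathrm{desc}})$ and $M_-=-\tfrac1\epsilon(L_-^{\mathrm{desc}}-\psi_-^\ast L_0^{\mathrm{desc}})$, so restricting to a fibre of $\psi_\pm$, where the pullback term is trivial, shows that $M_+$ is $\psi_+$-ample and $M_-$ is $\psi_-$-anti-ample. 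Both $M_+$ and $M_-$ restrict to the descent of $M$ over the common open subquotient parameterizing curves with no tail labeled by $I$ --- that is, over the complement of the image of $D_I$ --- so $M_-$ is the strict transform of $M_+$ under $\psi_-^{-1}\circ\psi_+$. Thus $-M_+$ is $\psi_+$-anti-ample and its strict transform $-M_-$ is $\psi_-$-ample, which is precisely the assertion that the wall-crossing map is a flip. The step I expect to require the most care is this last point: the compatibility of the two VGIT polarizations across the wall, i.e.\ that $M_-$ really is the strict transform of $M_+$. The way I would handle it is exactly the identification above --- Proposition~\ref{WallCrossing} shows the two quotients coincide \emph{literally} away from $D_I$, so the identification holds on the nose and no appeal to normality of the quotients is needed (cf.\ Remark~\ref{Normality}).
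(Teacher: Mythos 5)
Your proposal is correct and follows essentially the same route as the paper: the same trichotomy on whether $f^{\pm}$ is an isomorphism, followed by the observation that the gluing diagrams (equivalently, Corollary~\ref{DivisorialContraction} in the range $k \neq 0, d-1$) show both contractions are small. The only difference is that where the paper simply cites Theorem~3.3 of \cite{Thaddeus} to conclude, you unwind that citation and verify the flip condition directly by descending the perturbing linearization $M$ to relatively ample/anti-ample classes on the two chamber quotients, which is a correct (if redundant) elaboration of the cited result.
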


\begin{proof}
Consider the diagram:
$$\xymatrix{
U_{d,n} \gquot_{\gamma,\vec{c},+} \SL(d+1) \ar@{<.>}[rr] \ar[rd]^{f^+} & & U_{d,n} \gquot_{\gamma,\vec{c},-} \SL(d+1) \ar[ld]_{f^-} \\
 & U_{d,n} \gquot_{\gamma,\vec{c},0} \SL(d+1) & . }$$
The result follows from \cite[Theorem 3.3]{Thaddeus}, since if neither $f^+$ nor $f^-$ is bijective then both are small contractions, by the gluing diagram above.
\end{proof}

Note that this is a flip in the sense of \cite{Thaddeus}.  That is, there exists a $\mathbb{Q}$-Cartier divisor class $D$ on $U_{d,n} \gquot_{\gamma,\vec{c},-} \SL(d+1)$, such that $\mathcal{O} (-D)$ is relatively ample over $U_{d,n} \gquot_{\gamma,\vec{c},0} \SL(d+1)$, and if $g : U_{d,n} \gquot_{\gamma,\vec{c},-} \SL(d+1) \dashrightarrow U_{d,n} \gquot_{\gamma,\vec{c},+} \SL(d+1)$ is the induced birational map, then the divisor class $g_* D$ is $\mathbb{Q}$-Cartier, and $\mathcal{O} (D)$ is relatively ample over $U_{d,n} \gquot_{\gamma,\vec{c},0} \SL(d+1)$.

Because of Corollary \ref{Flips}, it is interesting to ask when the wall-crossing map is regular.  Although we are unable to answer this question at present, we can provide a condition for the map to contract no curves.  If the GIT quotients were normal, this would be sufficient to conclude that the inverse map is regular in precisely this case (see Remark \ref{Normality}).

\begin{proposition}
\label{Regular}
The rational map
$$U_{d,n} \gquot_{\gamma,\vec{c},+} \SL(d+1) \dashrightarrow U_{d,n} \gquot_{\gamma,\vec{c},-} \SL(d+1)$$
contracts no curves if and only if, for every partition $I = I_1 \sqcup \cdots \sqcup I_m$ into at least 3 disjoint sets, we have $\sum_{i=1}^m \sigma ( I_i ) \neq k$.
\end{proposition}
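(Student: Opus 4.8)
The plan is to localize the question to the image of $D_I\subset\overline{M}_{0,n}$, reduce it to a statement about one of the projection maps from a marked point, and then run the analysis behind Proposition \ref{ProjectionIsomorphism}.

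First I would invoke Proposition \ref{WallCrossing}: the wall-crossing map restricts to an isomorphism away from the image of $D_I$, so any curve it contracts lies over that image, and it contracts a curve if and only if the birational morphism $f^{+}:U_{d,n}\gquot_{I,k,+}\SL(d+1)\to U_{d,n}\gquot_{I,k,0}\SL(d+1)$ does. (This is the sense in which the discussion preceding the proposition, and Remark \ref{Normality}, are to be read: were the quotients normal, $f^{+}$ contracting nothing would force it to be an isomorphism, hence the inverse wall-crossing map to be regular.) Over the image of $D_I$, the gluing diagrams above identify $f^{+}$ with the map $(\pi_{i+1},\mathrm{id})$ on the product $\overline{M}_{0,i+1}(\mathcal{Z}_{\gamma,\vec{c}_I,1-\epsilon})\times\overline{M}_{0,n-i+1}(\mathcal{Z}_{\gamma,\vec{c}_{I^{c}},\gamma+\epsilon})$; when $k=0$ or $k=d-1$ one still reduces, after the obvious modification to the diagram, to the projection map $\pi_{i+1}$ from the attaching point on the $X_I$-factor. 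So it suffices to decide when $\pi_{i+1}$ contracts a curve.

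Next I would rerun the argument from the proof of Proposition \ref{ProjectionIsomorphism}: $\pi_{i+1}$ contracts the component $E\ni q$ of a stable tail $X_I$ exactly when $\deg E=1$, and it contracts a positive-dimensional family exactly when such an $E$ can carry at least four special points in a general member. To translate this, note $\deg X_I=\sigma_{+}(I)=k+1$, since $X_I$ is a tail of $X$; every tail $Y$ of $X_I$ hanging off $E$ is also a tail of $X$, labeled by a proper subset of $I$, so (the linearization lying on no wall but $\varphi(I,\cdot)^{-1}(k)$) Proposition \ref{DegreeOfTails} gives $\deg Y=\sigma(\text{marks on }Y)$; hence by Corollary \ref{DegreeOfComponents}, ``$\deg E=1$'' is equivalent to ``$\sum\sigma=k$ over the connected components of $\overline{X_I\setminus E}$''. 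Taking those components together with the marked points of $I$ lying on $E$, viewed individually, produces a partition $I=I_1\sqcup\cdots\sqcup I_m$ with $\sum_j\sigma(I_j)=k$, and $E$ carries $1+m$ special points in the general member (each marked point of $I$ on $E$ can be placed at its own general point of $E$ since each $c_i<1$, by Corollary \ref{SmoothWeight}); so ``$\ge 4$ special points'' becomes ``$m\ge 3$'', giving the ``only if'' direction. For ``if'', given such a partition --- refined so that its $\sigma=0$ parts are singletons --- I would build the curve directly: a degree-$1$ component $E$ through $q$, with a rational normal tail of degree $\sigma(I_j)$ carrying $I_j$ in general position for each positive part, and one general point of $E$ for each singleton part; its $(I,k,+)$-stability follows from Theorem \ref{ModularDescription}, and it varies in a positive-dimensional family contracted by $\pi_{i+1}$.

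The step I expect to be the main obstacle is the bookkeeping between the two $\sigma$-functions (for the ambient linearization versus for the tail linearization $(\gamma,\vec{c}_I,1-\epsilon)$), and in particular excluding the degenerate possibility of a branch $I_j$ of $X_I$ with $c_{I_j}>c-1$, for which the two values would disagree. I would rule this out by applying Corollary \ref{DegreeOfComponents} to $Y_j$ inside the whole curve $X$: the marks off $Y_j$ are exactly $I_j^{c}=[n]\setminus I_j$, and if $c_{I_j}>c-1$ then $c_{I_j^{c}}<1$, so $\sigma(I_j^{c})=0$ and $\deg Y_j=d-\sigma(I_j^{c})=d$, which is impossible since $Y_j$ is a component of $\overline{X_I\setminus E}$, so $\deg Y_j\le k\le d-1$. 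Once this is excluded, a direct arithmetic check (using that $c_{I_j}<c_I$ and $c_{I_j}\le c-1$) shows the ambient $\sigma$ and the tail $\sigma$ agree on every part $I_j$, so the translation above is valid verbatim, and likewise the curves built in the ``if'' direction have all the degrees prescribed by $\sigma$.
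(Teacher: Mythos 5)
Your proposal is correct and follows essentially the same route as the paper: the paper likewise uses the wall-crossing/gluing diagrams to identify $f^{+}$ over the image of $D_I$ with the projection map $\pi_{i+1}$ on the $X_I$-factor, and then simply cites Proposition \ref{ProjectionIsomorphism} to get the partition criterion with $d-1$ replaced by $(k+1)-1=k$. The only difference is that you re-derive that criterion (and check the agreement of the ambient and tail $\sigma$-functions) in detail, where the paper treats these as already established.
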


\begin{proof}
By the diagrams above, the map $f^+$ is bijective if and only if the projection map
$$ \overline{M}_{0,n-i+1} ( \mathcal{Z}_{\gamma , \vec{c}_{I} , 1- \epsilon} ) \to \overline{M}_{0,n-i+1} ( \mathcal{Z}_{\gamma , \vec{c}_{I} , \gamma + \epsilon} ) $$
is bijective.  By Proposition \ref{ProjectionIsomorphism}, this is the case if and only if, for every partition $I = I_1 \sqcup \cdots \sqcup I_m$ into at least 3 disjoint sets, we have $\sum_{i=1}^m \sigma ( I_i ) \neq k$.  It follows that the composite rational map $(f^- )^{-1} \circ f^+$ contracts no curves in precisely this case.
\end{proof}

\section{Examples}

In this section we consider examples of the quotients $U_{d,n} \gquot_{ \gamma , \vec{c}} \SL(d+1)$ for specific choices of $( \gamma , \vec{c} ) \in \Delta$.  We will see that many previously constructed compactifications of $M_{0,n}$ arise as such quotients.

\subsection{Hassett's Spaces}

In \cite{Has03}, Hassett constructs the moduli spaces of weighted pointed stable curves $\overline{M}_{0, \vec{c}}$.  A genus 0 marked curve $(X, p_1 , \ldots , p_n )$ is Hassett stable if:
\begin{enumerate}
\item The singularities are at worst nodal;
\item The are no marked points at nodes;
\item The weight at any smooth point is at most 1, and
\item $\omega_X ( \sum_{i=1}^n c_i p_i )$ is ample.
\end{enumerate}
Here we show that each of Hassett's spaces arises as a quotient of $U_{d,n}$.

\begin{theorem}\label{cor:HassettIso}
Let $(\gamma,\vec{c})$ be a linearization such that there are no strictly semistable points and $1 > \gamma > \max\{\frac{1}{2},1-c_1,\ldots,1-c_n\}$.  Then there is an isomorphism
$\overline{M}_{0,\vec{c}} \cong U_{d,n} \gquot_{(\gamma,\vec{c})} \SL(d+1).$
\end{theorem}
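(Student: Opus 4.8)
The plan is to recognize the Hassett space $\overline{M}_{0,\vec{c}}$ as the Smyth-modular compactification attached to an explicit extremal assignment, and then to prove that this assignment coincides with $\mathcal{Z}_{\gamma,\vec{c}}$, after which the theorem follows from Theorem \ref{ModularDescription}. Concretely, write $\rho_{\vec{c}}:\overline{M}_{0,n}\to\overline{M}_{0,\vec{c}}$ for Hassett's reduction morphism and, for a DM-stable curve $X$, let $\mathcal{Z}^{H}(X)$ be the set of components that $\rho_{\vec{c}}$ contracts; since every $c_i<1$, the interior of $\overline{M}_{0,\vec{c}}$ is $M_{0,n}$, so $\overline{M}_{0,\vec{c}}$ is a modular compactification in the sense of \cite{Smy09} and $\overline{M}_{0,\vec{c}}\cong\overline{M}_{0,n}(\mathcal{Z}^{H})$ by Theorem \ref{thm:Smyth}. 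It then suffices to show $\mathcal{Z}^{H}=\mathcal{Z}_{\gamma,\vec{c}}$: combined with Theorem \ref{ModularDescription} this gives $\overline{M}_{0,\vec{c}}\cong\overline{M}_{0,n}(\mathcal{Z}_{\gamma,\vec{c}})\cong U_{d,n}\gquot_{(\gamma,\vec{c})}\SL(d+1)$, and since the two assignments agree the $\vec{c}$-stable curves are literally the $\mathcal{Z}_{\gamma,\vec{c}}$-stable curves, so the isomorphism is the map of Proposition \ref{prop:Hassett}. Throughout one uses that ``no strictly semistable points'' forces $(\gamma,\vec{c})$ off every GIT wall (Propositions \ref{GITWalls}, \ref{WallCrossing}), hence $\varphi(I,\gamma,\vec{c})\notin\mathbb{Z}$ for all nonempty $I$; in particular $c_I\neq1$, and Lemma \ref{Additivity} applies.

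One inclusion is formal: by Proposition \ref{prop:Hassett} the morphism $\phi$ of Theorem \ref{thm:DM-GIT} factors through $\rho_{\vec{c}}$, so every component contracted by $\rho_{\vec{c}}$ is contracted by $\phi$; by Theorem \ref{ModularDescription} the components contracted by $\phi$ are exactly those in $\mathcal{Z}_{\gamma,\vec{c}}$, so $\mathcal{Z}^{H}\subseteq\mathcal{Z}_{\gamma,\vec{c}}$. For the reverse inclusion, fix $E\in\mathcal{Z}_{\gamma,\vec{c}}(X)$, i.e. $\sum_{Y}\sigma(Y)=d$ over the connected components $Y$ of $\overline{X\setminus E}$, and write $I_Y$ for the set of markings on $Y$. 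Call $Y$ \emph{heavy} if $\sigma(Y)\geq1$ (equivalently $c_{I_Y}>1$, using $c_{I_Y}\neq1$) and \emph{light} otherwise; let $h$ be the number of heavy tails, so $h\geq1$. The first step is to bound $h$. If some heavy tail satisfies $c_{I_Y}>c-1$ then by disjointness all other tails are light, so $h=1$. Otherwise every heavy $Y$ has $c_{I_Y}\leq c-1$ and the \emph{strict} inequality $\sigma(Y)<\tfrac{c_{I_Y}-1}{1-\gamma}+1$ (strict because $\varphi(I_Y,\gamma,\vec{c})\notin\mathbb{Z}$); summing over the heavy tails and using $\sum_{\text{heavy}}c_{I_Y}\leq c$ together with $c=d+1-(d-1)\gamma$ gives $h\gamma<1+\gamma$, hence $h\leq2$ because $\gamma>\tfrac12$.

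It remains to verify, in the two cases $h=1$ and $h=2$, that $\rho_{\vec{c}}$ contracts $E$. If $h=1$ the unique heavy tail $Y_1$ has $\sigma(Y_1)=d$, so $c_{I_{Y_1}}>c-1$ by Lemma \ref{lem:sigmdef} and the marked points off $Y_1$ carry weight $c-c_{I_{Y_1}}<1$; after $\rho_{\vec{c}}$ collapses the light tails onto $E$, the component $E$ becomes a tail of weight $<1$ with a single node, so $\omega_X(\sum c_ip_i)$ has negative degree on $E$ and $E$ is contracted. If $h=2$, write $Y_1,Y_2$ for the heavy tails and $c_{\mathrm{rest}}:=c-c_{I_{Y_1}}-c_{I_{Y_2}}$ for the total weight of the markings on $E$ and on the light tails; Lemma \ref{Additivity} applied to $Y_1$ gives $\sigma(I_{Y_2}\sqcup(\mathrm{rest}))=\sigma(Y_2)$, and writing both sides as ceilings of $\varphi$ and invoking once more that $\varphi(I_{Y_2},\gamma,\vec{c})\notin\mathbb{Z}$ forces $c_{\mathrm{rest}}<1-\gamma$. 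But $\gamma>1-c_i$ says each individual $c_i$ exceeds $1-\gamma$, so $c_{\mathrm{rest}}<1-\gamma$ is possible only if $c_{\mathrm{rest}}=0$: no light tails and no markings on $E$. Then $E$ meets the rest of $X$ in exactly two points and carries no marked points, $\omega_X(\sum c_ip_i)$ has degree $0$ on $E$, and $\rho_{\vec{c}}$ again contracts $E$. Hence $E\in\mathcal{Z}^{H}(X)$ in both cases, so $\mathcal{Z}_{\gamma,\vec{c}}=\mathcal{Z}^{H}$ and the theorem follows.

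The crux is the reverse inclusion, and within it both hypotheses enter essentially: $\gamma>\tfrac12$ is precisely what caps the number of heavy tails at two (so that the GIT limit is nodal, matching Hassett's node condition), while $\gamma>1-c_i$ is what keeps a stray marked point from being forced onto the node of that limit (matching Hassett's requirement that nodes be unmarked). The one genuinely fiddly point is the bookkeeping in the $h=2$ case—extracting $c_{\mathrm{rest}}<1-\gamma$ from the additivity identity via a comparison of ceiling functions—though it is short. (As a sanity check on the geometry, Corollaries \ref{Singularities} and \ref{SmoothWeight} and Proposition \ref{SingularWeight} already show directly that under these hypotheses GIT-stable curves are nodal with no marked points at nodes, i.e. satisfy Hassett's conditions (1)--(3); the content of the proof is that the degree constraints imposed by $\sigma$ match Hassett's ampleness condition (4).)
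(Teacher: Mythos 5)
Your proof is correct, but it takes a genuinely different route from the paper's. The paper never touches extremal assignments here: it constructs the inverse morphism directly by observing that, under the hypotheses, the universal family over $U_{d,n}^{ss}$ is already a family of $\vec{c}$-Hassett-stable curves --- nodality from $\gamma>\frac{1}{2}$ via Corollary \ref{Singularities}, unmarked nodes from $\gamma>1-c_i$ via Proposition \ref{SingularWeight}, the weight bound from Corollary \ref{SmoothWeight}, and ampleness by the argument of Proposition \ref{NoAutomorphisms} --- so that representability of Hassett's functor yields an $\SL(d+1)$-invariant map $U_{d,n}^{ss}\to\overline{M}_{0,\vec{c}}$ descending to the quotient, which by separatedness is inverse to the map of Proposition \ref{prop:Hassett}. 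You instead route everything through Theorem \ref{ModularDescription} and Smyth's classification, reducing the statement to the combinatorial identity $\mathcal{Z}^{H}=\mathcal{Z}_{\gamma,\vec{c}}$; your heavy-tail count ($h\gamma<1+\gamma$, so $h\le 2$ when $\gamma>\frac{1}{2}$) and the residual-weight estimate $c_{\mathrm{rest}}<1-\gamma$ in the $h=2$ case are the combinatorial shadows of the paper's geometric facts, and both computations check out (including the ceiling comparison, which is valid since $\varphi(I_{Y_2},\gamma,\vec{c})\notin\mathbb{Z}$ and the relevant arguments lie in $[1,c-1]$). What the paper's argument buys is independence from Section 5: it needs neither Theorem \ref{ModularDescription} nor the identification of $\overline{M}_{0,\vec{c}}$ as a Smyth compactification $\overline{M}_{0,n}(\mathcal{Z}^{H})$, which your argument takes as an extra (true, but unproved-in-this-paper) input. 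What your argument buys is an explicit description of the Hassett extremal assignment and a transparent accounting of exactly where each hypothesis enters. Two small points to tighten: in the $h=1$ case, say explicitly that $E$ together with the light tails forms a connected tail of total weight $<1$ and that such a tail is entirely collapsed by $\rho_{\vec{c}}$ (contracting leaf components inductively), rather than collapsing the light tails first; and note that ``off every wall'' literally gives $\varphi(I,\gamma,\vec{c})\notin\{0,\ldots,d-1\}$, which suffices for all your uses of Lemma \ref{Additivity} since the arguments involved lie in that range.
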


\begin{proof}
It is enough to prove the existence of a morphism $U_{d,n} \gquot_{(\gamma,\vec{c})} \SL(d+1) \rightarrow \overline{M}_{0,\vec{c}}$ preserving the interior.  Indeed, both sides are separated, so such a morphism is automatically inverse to the morphism in Proposition \ref{prop:Hassett}.

We claim that the hypotheses imply that the universal family over the semistable locus $(U_{d,n})^{ss}$ is a family of Hassett-stable curves for the weight vector $\vec{c}$.  Indeed,
\begin{itemize}
\item The singularities are at worst nodal, by Corollary \ref{Singularities} and the assumption $\gamma > \frac{1}{2}$;
\item The are no marked points at nodes, by Proposition \ref{SingularWeight} and the fact that $\gamma > 1 - c_i$ for $i=1,\ldots,n$;
\item The weight at any smooth point is at most 1, by Corollary \ref{SmoothWeight}; and
\item The ampleness condition of Hassett-stability is satisfied.
\end{itemize}
The only item here that needs explanation is the last one.  Hassett-stability, in genus zero, requires that the weight of marked points on any component, plus the number of nodes on that component, is strictly greater than 2.  This follows by the same argument as Proposition \ref{NoAutomorphisms}.

Having shown that we have a family of Hassett-stable curves over the semistable locus, the representability of this moduli space implies that we have a morphism $(U_{d,n})^{ss} \rightarrow \overline{M}_{0,\vec{c}}$.  This is clearly $\SL(d+1)$-invariant, so it descends to a morphism from the categorical quotient, which is precisely the GIT quotient: $U_{d,n} \gquot_{(\gamma,\vec{c})} \SL(d+1) \rightarrow \overline{M}_{0,\vec{c}}$.  The interior $M_{0,n}$ is clearly preserved, so this concludes the proof.
\end{proof}

\begin{corollary}
For all $n \geq 3$, there exists $d \geq 1$ such that every Hassett space of $n$-pointed genus zero curves, including $\overline{M}_{0,n}$, is a quotient of $U_{d,n}$.
\end{corollary}

\begin{proof}
Note that there is a chamber structure on the space of weight data
(\cite[\S5]{Has03}). Chambers are separated by hyperplanes
\[
	\{(c_{1}, \cdots, c_{n})|\sum_{i \in I}c_{i} = 1\}
\]
for some $I \subset \{1, 2, \cdots, n\}$.
Therefore we can find $\epsilon > 0$ satisfying the following property:
For any weight datum $\vec{c}$, there is a weight datum $\vec{c'}$ in the same
chamber and $c_{i}' > \epsilon$ for all $i$.
Now we can take $d$ satisfying $\frac{d+1-n}{d-1} > 1 - \epsilon$.
Then this $d$ satisfies
\[
	1 > \frac{d+1-c}{d-1} \ge \frac{d+1-n}{d-1}
	> 1-\epsilon \ge \mathrm{max}\{\frac{1}{2}, 1-c_{1}', \cdots,
	1-c_{n}'\},
\]
for every weight datum $\vec{c'}$.  The result follows immediately from Theorem \ref{cor:HassettIso}, since any Hassett space with weight datum lying on a wall is isomorphic to one with weight datum lying in an adjacent chamber.
\end{proof}

We note the following fact, which was remarked in the introduction:

\begin{corollary}
There exists $L\in\Delta^\circ$ with $U_{n-2,n}\gquot_L\SL(n-1)\cong\overline{M}_{0,n}$.
\end{corollary}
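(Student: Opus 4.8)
The plan is to exhibit a single linearization $L\in\Delta^\circ$ for $d=n-2$ whose associated extremal assignment $\mathcal{Z}_{\gamma,\vec{c}}$ is trivial, so that by Theorem \ref{ModularDescription} the quotient $U_{n-2,n}\gquot_L\SL(n-1)$ equals $\overline{M}_{0,n}(\varnothing)=\overline{M}_{0,n}$. Concretely, I would apply Theorem \ref{cor:HassettIso} with the symmetric weight $\vec{c}=(1-\delta,\ldots,1-\delta)$ for small $\delta>0$ and $\gamma$ chosen so that $(\gamma,\vec{c})\in\Delta$; then the Hassett space $\overline{M}_{0,\vec{c}}$ is $\overline{M}_{0,n}$ itself once $\delta$ is small enough that no proper subset of weights sums to $1$ (i.e. $\delta<\tfrac1{n-1}$), since in that regime no two marked points may collide and no component may be contracted, so Hassett-stability coincides with Deligne--Mumford stability.

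First I would pin down the arithmetic: with $d=n-2$ and $c_i=1-\delta$ for all $i$, the cross-section condition $(d-1)\gamma+\sum c_i=d+1$ reads $(n-3)\gamma+n(1-\delta)=n-1$, i.e. $\gamma=\tfrac{n\delta-1}{n-3}$ (valid and in $(0,1)$ for $\delta$ in a suitable small range, say $\delta$ slightly larger than $1/n$). Then I would check the three hypotheses of Theorem \ref{cor:HassettIso}: that $1>\gamma>\tfrac12$, that $\gamma>1-c_i=\delta$ for all $i$, and that $(\gamma,\vec{c})$ admits no strictly semistable points. The first two are elementary inequalities in $\delta$ and $n$ that hold for an appropriate choice of $\delta$ (one can solve $\tfrac{n\delta-1}{n-3}>\tfrac12$ and $\tfrac{n\delta-1}{n-3}>\delta$ simultaneously); the absence of strictly semistable points can be arranged, by Corollary \ref{NoSemistablePoints}, by perturbing $\vec{c}$ slightly off the walls $\varphi(I,\cdot)^{-1}(k)$ while staying in the same Hassett chamber and keeping the three inequalities strict.

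Having verified the hypotheses, Theorem \ref{cor:HassettIso} gives $\overline{M}_{0,\vec{c}}\cong U_{n-2,n}\gquot_{(\gamma,\vec{c})}\SL(n-1)$, and it remains only to observe $\overline{M}_{0,\vec{c}}\cong\overline{M}_{0,n}$: since every $c_I=|I|(1-\delta)$ with $|I|\ge 2$ exceeds $1$ for $\delta<\tfrac12$, while $c_i=1-\delta<1$, the Hassett stability conditions are exactly those of Knudsen--Deligne--Mumford, so the two moduli problems agree. I expect the main obstacle to be purely bookkeeping: choosing $\delta$ (equivalently $\gamma$) in the narrow window where $\tfrac12<\gamma<1$, $\gamma>\delta$, $(\gamma,\vec{c})$ lies in the interior of a Hassett chamber, and simultaneously off all GIT walls. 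This is a finite intersection of open conditions that is nonempty for each fixed $n\ge 4$, so no genuine difficulty arises; the statement then follows immediately.
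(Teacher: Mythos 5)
Your overall strategy is the same as the paper's: choose symmetric weights so that $\overline{M}_{0,\vec{c}}\cong\overline{M}_{0,n}$ (no two points may collide), then invoke Theorem \ref{cor:HassettIso}. However, there is a concrete arithmetic error in your choice of parameter. With $c_i=1-\delta$ and $d=n-2$ you correctly compute $\gamma=\frac{n\delta-1}{n-3}$, but you then propose taking ``$\delta$ slightly larger than $1/n$,'' which makes $\gamma$ slightly larger than $0$ — this badly violates the hypothesis $\gamma>\frac12$ of Theorem \ref{cor:HassettIso}. Solving the inequalities you list, together with the constraint $\delta<\frac12$ (needed so that $c_i+c_j>1$ and hence $\overline{M}_{0,\vec{c}}\cong\overline{M}_{0,n}$), forces
\[
\frac{n-1}{2n}<\delta<\frac12,
\]
i.e.\ $c_i=1-\delta$ must be taken just \emph{above} $\frac12$, not just below $1$. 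This is exactly the paper's choice $c_i=\frac12+\epsilon$, for which $\gamma=\frac{d+1-(\frac n2+n\epsilon)}{d-1}>\frac12$ becomes $d>n-2n\epsilon-3$, so $d=n-2$ works for small $\epsilon$. Your auxiliary condition $\delta<\frac1{n-1}$ is both unnecessary (all that is needed is $\delta<\frac12$, as you note later) and incompatible with the actual window above.

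The gap is therefore that you assert the ``finite intersection of open conditions'' is nonempty without verifying it, and the one explicit value of $\delta$ you offer lies outside that intersection. The window is in fact nonempty for every $n\ge 4$ since $\frac{n-1}{2n}<\frac12$, so the proof is recoverable once the parameter range is corrected; the remaining steps (perturbing off the finitely many GIT walls $\varphi(I,\cdot)^{-1}(k)$ while staying in the Hassett chamber, and identifying $\overline{M}_{0,\vec{c}}$ with $\overline{M}_{0,n}$ because every $c_I$ with $|I|\ge 2$ exceeds $1$) are sound and match the paper.
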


\begin{proof}
The Hassett space $\overline{M}_{0,\vec{c}}$ with $\vec{c}=(\frac{1}{2}+\epsilon,\ldots,\frac{1}{2}+\epsilon)$ is isomorphic to $\overline{M}_{0,n}$ (in fact they have the same universal curves) since no points are allowed to collide.  Thus, it suffices to take a linearization $(\gamma,\vec{c})\in\Delta^\circ$ with $\gamma > \frac{1}{2}$.  Now, $\gamma = \frac{d+1 - (\frac{n}{2} + n\epsilon)}{d-1}$, so $\gamma > \frac{1}{2}$ is equivalent to $d > n - 2n\epsilon - 3$, so indeed for $\epsilon$ small enough we can take $d=n-2$.
\end{proof}

\subsection{Kontsevich-Boggi compactification}

In \cite{Kont92}, Kontsevich described certain topological modiﬁcations of the moduli spaces
$\overline{M}_{g,n}$ which for $g = 0$ were given an algebraic description by Boggi as an alternate compactification of $\overline{M}_{0,n}$ \cite{Bog99}.  This compactification was later independently constructed by Smyth in \cite{Smy09}.  A genus 0 marked curve $(X, p_1 , \ldots , p_n )$ is \emph{Boggi-stable} if:
\begin{enumerate}
\item The singularities are multinodal;
\item There are no marked points at the singular points;
\item There are at least two points on any tail, and
\item There are no unmarked components.
\end{enumerate}
The Boggi space corresponds to the extremal assignment in which all components without marked points are assigned.  We will see that the Boggi space also arises as a quotient of $U_{d,n}$, in the case $d=n$, $c_i = 1 - \epsilon$ $\forall i$.  Note that in this case $\gamma = \frac{1+d \epsilon}{d-1}$.

\begin{proposition}
The GIT quotient $U_{d,n} \gquot_{\frac{1+d \epsilon}{d-1}, \vec{1 - \epsilon}}\SL(d+1)$ is isomorphic to the Boggi space $\overline{M}_{0,n}^{Bog}$.
\end{proposition}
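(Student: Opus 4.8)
The plan is to use the same strategy as in the proof of Theorem~\ref{cor:HassettIso}: construct a morphism $U_{d,n}\gquot_{\gamma,\vec{1-\epsilon}}\SL(d+1)\to\overline{M}_{0,n}^{Bog}$ preserving the interior $M_{0,n}$, and then invoke separatedness of both spaces together with the morphism $\overline{M}_{0,n}\to U_{d,n}\gquot\SL(d+1)$ coming from Theorem~\ref{thm:DM-GIT} (equivalently Proposition~\ref{prop:Hassett} / Theorem~\ref{ModularDescription}) to conclude that the two morphisms are mutually inverse. Since the Boggi space is a modular compactification (Smyth), this will follow from the representability statement in Theorem~\ref{thm:Smyth} once we check that the universal family over $(U_{d,n})^{ss}$ for this linearization consists of Boggi-stable curves.

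First I would record the relevant numerics: with $d=n$ and $c_i=1-\epsilon$ for all $i$ we have $\gamma=\frac{1+d\epsilon}{d-1}$, so for $\epsilon$ small $\gamma<1$ (hence the quotient is a compactification of $M_{0,n}$ by Proposition~\ref{StablePoints}) but $\gamma>\frac{1}{m-1}$ fails for $m\ge 3$, so by Corollary~\ref{Singularities} all multinodal singularities are allowed. Next I would verify the four conditions in the definition of Boggi-stability for a GIT-semistable curve $(X,p_1,\ldots,p_n)$: (1) the singularities are multinodal, which is exactly Corollary~\ref{cor-geomofstablecurves}(2); (2) there are no marked points at singular points, because by Proposition~\ref{SingularWeight} the total weight at a singularity of multiplicity $m\ge 2$ cannot exceed $1-(m-1)\gamma<1-\epsilon=c_i$, so no single marked point can sit there; (3) every tail carries at least two marked points, which follows from the argument of Proposition~\ref{NoAutomorphisms}: a tail $E$ has $\deg E\ge 1$, so $\sigma(E)\ge 1$, additivity gives $\sum_{p_i\in\overline{X\setminus E}}c_i<c-1$ (strict by the no-strictly-semistable assumption — though here I should be careful, since $c_i=1-\epsilon$ may put the linearization on a wall; if so, I perturb $\epsilon$ slightly or argue directly that a single point of weight $1-\epsilon$ on $E^{sm}$ is not enough), hence $\sum_{p_i\in E^{sm}}c_i>1$ and by Corollary~\ref{SmoothWeight} these weights must be distributed over at least two points; (4) there are no unmarked components — an unmarked tail would have $\sigma=0$ hence degree $0$ by Proposition~\ref{DegreeOfTails}, contradicting $\deg E\ge 1$, and an unmarked bridge is excluded by the bridge case of Proposition~\ref{NoAutomorphisms}. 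This shows $(U_{d,n})^{ss}$ carries a family of Boggi-stable curves, so representability yields an $\SL(d+1)$-invariant morphism $(U_{d,n})^{ss}\to\overline{M}_{0,n}^{Bog}$, which descends to the categorical (indeed geometric) quotient.

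Finally I would check that the resulting morphism $U_{d,n}\gquot\SL(d+1)\to\overline{M}_{0,n}^{Bog}$ is an isomorphism. Since the interior $M_{0,n}$ is clearly preserved, this morphism is inverse to the Smyth morphism $\overline{M}_{0,n}^{Bog}\cong\overline{M}_{0,n}(\mathcal{Z})\leftarrow\overline{M}_{0,n}\to U_{d,n}\gquot\SL(d+1)$ once one observes both targets are separated. Concretely, the cleanest route is to identify the GIT extremal assignment $\mathcal{Z}_{\gamma,\vec{1-\epsilon}}$ of Theorem~\ref{ModularDescription} with the Boggi extremal assignment "assign all components without marked points": for this linearization a component $E$ lies in $\mathcal{Z}_{\gamma,\vec{1-\epsilon}}$ iff $\sum_Y\sigma(Y)=d$ over the connected components $Y$ of $\overline{X\setminus E}$, and I would show this happens precisely when $E$ carries no marked points, using that with these weights $\sigma(Y)=\deg Y$ for every tail $Y$ and that the marked-point-free components are exactly those whose complementary tails exhaust the full degree $d$. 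Then Theorem~\ref{ModularDescription} gives the isomorphism directly.

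The main obstacle I anticipate is the wall issue: the linearization $(\gamma,\vec{1-\epsilon})$ with all $c_i$ equal is quite likely to lie on GIT walls $\varphi(I,\cdot)^{-1}(k)$, so Theorem~\ref{ModularDescription} and Proposition~\ref{NoAutomorphisms} do not apply verbatim, and the quotient may have strictly semistable points. I would handle this either by a small generic perturbation of $\vec{c}$ within the Boggi chamber (the Boggi space is insensitive to the precise weights as long as no single point has weight $1$ and the chamber is unchanged, so $\overline{M}_{0,\vec{c}}^{Bog}$-type stability is constant there) together with the VGIT wall-crossing morphisms of \S\ref{section:wallcrossing}, or by arguing that even at the wall the only strictly semistable curves are $(I,k)$-bridges, whose closed-orbit representatives are still Boggi-stable, so the map to $\overline{M}_{0,n}^{Bog}$ is still well-defined and bijective on closed orbits. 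Sorting out this boundary subtlety cleanly is where the real work lies; everything else is a direct application of the machinery already developed.
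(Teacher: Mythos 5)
Your proposal is correct, and the route you single out as ``cleanest''---identifying $\mathcal{Z}_{\gamma,\vec{1-\epsilon}}$ with the Boggi assignment of unmarked components via Theorem \ref{ModularDescription}, using the computation $\sigma(Y)=\lceil (k(1-\epsilon)-1)/(1-\gamma)\rceil=k$ for a tail carrying $k$ marked points---is exactly the paper's proof, so the preliminary verification of the four Boggi-stability axioms is superfluous. The wall issue you flag as ``where the real work lies'' is in fact vacuous: for small generic $\epsilon$ one has $\varphi(I,\gamma,\vec{1-\epsilon})=k-1+\frac{k-1}{d-2}+O(\epsilon)$, which lies strictly between $k-1$ and $k$ for $2\le k=\vert I\vert\le d-2$, while $c_I<1$ for $\vert I\vert=1$ and $c_I>c-1$ for $\vert I\vert\ge d-1$; hence the linearization sits in an open chamber, there are no strictly semistable points, and Theorem \ref{ModularDescription} applies verbatim.
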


\begin{proof}
Let $(X, p_1 , \ldots , p_n ) \in \overline{M}_{0,n}$ be a Deligne-Mumford stable curve.  It suffices to show that a component of $X$ is $\mathcal{Z}_{\frac{1+d \epsilon}{d-1}, \vec{1 - \epsilon}}$-assigned if and only if it is unmarked.  Let $Y \subset X$ be a tail containing $k$ marked points.  Then
$$ \sigma (Y) = \lceil \frac{k(1 - \epsilon ) - 1}{1 - \gamma} \rceil = k .$$
Hence, for any component $E \subset X$, $E$ is assigned if and only if the total number of points on the connected components of $\overline{X \backslash E}$ is equal to $d=n$.  In other words, $E$ is assigned if and only if it is unmarked.
\end{proof}

\subsection{Variation of GIT}

In addition to previously constructed moduli spaces, our GIT approach also recovers known maps between these moduli spaces.  As an example we consider the case where $n=d=9$ and the weights are symmetric -- that is, $c_i = c_j$ $\forall i,j$.  By the results above, we see that $U_{9,9} \gquot_{\gamma , \vec{c}}\SL(10)$ is isomorphic to a Hassett space for all $\gamma > \frac{1}{2}$, and isomorphic to the Boggi space for $\frac{1}{9} < \gamma < \frac{2}{7}$.  In the range $\frac{2}{7} < \gamma < \frac{1}{2}$, the space $\overline{M}_{0,9}^{trip} = U_{9,9} \gquot_{\gamma , \vec{c}}\SL(10)$ is isomorphic to $\overline{M}_{0,9}$, but the corresponding moduli functor is different.  Specifically, a curve consisting of three components meeting in a triple point, each containing three marked points, is GIT-stable, while the corresponding Deligne-Mumford stable curve obtained by replacing the triple point with a rational triborough is not GIT-stable.  We note furthermore that since all of the moduli spaces just described are normal, the corresponding wall-crossing maps are all regular by Proposition \ref{Regular}.  As we increase $\gamma$ from $\frac{1}{9}$ to 1, we therefore obtain the following picture:

\[
\begin{xy} 0;<6mm,0mm>:
	0*{\gamma};(1,0)*{\circ}; (4,0)*{\circ}**@{-};
	(7,0)*{\circ}**@{-}; (10,0)*{\circ}**@{-};
	(13,0)*{\circ}**@{-}; (16,0)*{\circ}**@{-};
	(19,0)*{\circ}**@{-};
	(1,-1)*{\frac{1}{9}};(4,-1)*{\frac{2}{7}};
	(7,-1)*{\frac{1}{2}};(10,-1)*{\frac{11}{16}};
	(13,-1)*{\frac{7}{8}};(16,-1)*{\frac{31}{32}};
	(19,-1)*{1};
	(2.5, 4)*{\overline{M}_{0,9}^{Bog}};
	(5.5, 6)*{\overline{M}_{0,9}^{trip}}**@{-}?<*@{<};(6.5,6);
	(8.5,6)*{\overline{M}_{0,9}}**@{-}?<*@{<}?(0.4)*!/_3mm/{\cong};
	(9.5,6);
	(11.5,6)*{\overline{M}_{0,\vec{\frac{1}{3}+\epsilon}}}**@{-}
	?(0.2)*!/_3mm/{\cong} ?>*@{>};(12,5.5);
	(14.5,4)*{\overline{M}_{0,\vec{\frac{1}{4}+\epsilon}}}**@{-}
	?>*@{>};(15,3.5);
	(17.5,2)*{\overline{M}_{0,\vec{\frac{1}{5}+\epsilon}}}**@{-}
	?>*@{>};(18.5, 2);
	(21,2)*{(\PP^{1})^{9}\gquot \SL(2)}**@{-}
	?(0.1)*!/_3mm/{\cong}?>*@{>};
\end{xy}
\]

\subsection{An Example of a Flip}

While the previous example includes several previously constructed spaces, it does not include any flips.  To see an example of a flip, we consider the case where $d=5, n=19$, and the weights are symmetric.  Let $I_k$ denote any set of $k$ marked points.  When $\gamma = \frac{4}{9} + \epsilon$, we see that
\begin{displaymath}
\sigma ( I_k ) = \left\{ \begin{array}{ll}
0 & \textrm{if $k \leq 4$}\\
1 & \textrm{if $5 \leq k \leq 7$}\\
2 & \textrm{if $8 \leq k \leq 9$}\\
3 & \textrm{if $10 \leq k \leq 11$}\\
4 & \textrm{if $12 \leq k \leq 14$}\\
5 & \textrm{if $15 \leq k$}
\end{array} \right.
\end{displaymath}
On the other hand, when $\gamma = \frac{4}{9} - \epsilon$, then each of these remains the same, except for $ \sigma ( I_7 )$ which becomes 2, and $\sigma ( I_{12} )$, which becomes 3.  Now, consider the diagram
$$\xymatrix{
U_{5,19} \gquot_{\frac{4}{9} + \epsilon , \vec{\frac{2}{9} - \epsilon}} \SL(6) \ar@{<.>}[rr] \ar[rd]^{f^+} & & U_{5,19} \gquot_{\frac{4}{9} - \epsilon , \vec{\frac{2}{9} + \epsilon}} \SL(6) \ar[ld]_{f^-} \\
 & U_{5,19} \gquot_{\frac{4}{9} , \vec{\frac{2}{9}}} \SL(6) & . }$$
By Corollary \ref{DivisorialContraction}, neither $f^+$ nor $f^-$ contracts a divisor.  On the other hand, the map $f^+$ contracts the F-curve class $(10,7,1,1)$, whereas the map $f^-$ contracts the F-curve class $(12,5,1,1)$, so neither $f^+$ nor $f^-$ is trivial.  (The numerical class of an F-curve is determined by the number of marked points on each leg, whence the preceding notation.) It follows from Corollary \ref{Flips} that the diagram is a flip.

Finally we note that the moduli space $U_{5,19} \gquot_{\frac{4}{9}, \vec{\frac{2}{9}}}\SL(6)$ is not isomorphic to a modular compactification as in \cite{Smy09} (this does not contradict Proposition \ref{prop:GITass} because the linearization lies on a GIT wall, hence there are strictly semistable points).  In this sense it is truly a ``new'' compactification of $M_{0,19}$.  To see this, consider the Deligne-Mumford stable curve $X$ which is a chain of 4 rational curves, each component containing 10, 2, 2, and 5 marked points, respectively.  The image of $X$ in the GIT quotient has three components.  These components have 10, 0, and 5 marked points on their interiors, and there are 2 marked points at each of the nodes -- the two interior components of $X$ are contracted.  On the other hand, the original curve is a specialization of a Deligne-Mumford stable curve $Y$ consisting of 3 components, containing 10, 4, and 5 marked points, respectively.  Hence, if this space were modular, then by \cite{Smy09} the interior component of $Y$ would have to be contracted as well.  But we see that this is not the case.

\begin{center}
\small
\begin{tikzpicture}[scale=0.8]
	\draw[line width=1pt] (0, 10) -- (1, 7);
	\draw[line width=1pt] (0.8, 7.2) -- (2.9, 7.7);
	\draw[line width=1pt] (2.1, 7.7) -- (4.2, 7.2);
	\draw[line width=1pt] (4, 7) -- (5, 10);
	\fill (0, 10) circle (2pt);
	\fill (0.1, 9.7) circle (2pt);
	\fill (0.2, 9.4) circle (2pt);	
	\fill (0.3, 9.1) circle (2pt);	
	\fill (0.4, 8.8) circle (2pt);	
	\fill (0.5, 8.5) circle (2pt);
	\fill (0.6, 8.2) circle (2pt);	
	\fill (0.7, 7.9) circle (2pt);	
	\fill (0.8, 7.6) circle (2pt);	
	\fill (0.87, 7.41) circle (2pt);	
	\fill (1.5, 7.366) circle (2pt);
	\fill (2.1, 7.5) circle (2pt);
	\fill (3.5, 7.366) circle (2pt);
	\fill (2.8, 7.5) circle (2pt);
	\fill (4.9, 9.7) circle (2pt);
	\fill (4.8, 9.4) circle (2pt);	
	\fill (4.7, 9.1) circle (2pt);	
	\fill (4.6, 8.8) circle (2pt);	
	\fill (4.5, 8.5) circle (2pt);
	
	\draw[line width=1pt] (7, 10) -- (8, 7);
	\draw[line width=1pt] (7.8, 7.2) -- (11.2, 7.2);
	\draw[line width=1pt] (11, 7) -- (12, 10);

	\fill (7, 10) circle (2pt);
	\fill (7.1, 9.7) circle (2pt);
	\fill (7.2, 9.4) circle (2pt);	
	\fill (7.3, 9.1) circle (2pt);	
	\fill (7.4, 8.8) circle (2pt);	
	\fill (7.5, 8.5) circle (2pt);
	\fill (7.6, 8.2) circle (2pt);	
	\fill (7.7, 7.9) circle (2pt);	
	\fill (7.8, 7.6) circle (2pt);	
	\fill (7.87, 7.41) circle (2pt);	
	\fill (8.5, 7.2) circle (2pt);
	\fill (9, 7.2) circle (2pt);
	\fill (9.5, 7.2) circle (2pt);
	\fill (10, 7.2) circle (2pt);
	\fill (11.9, 9.7) circle (2pt);
	\fill (11.8, 9.4) circle (2pt);	
	\fill (11.7, 9.1) circle (2pt);	
	\fill (11.6, 8.8) circle (2pt);	
	\fill (11.5, 8.5) circle (2pt);

	\draw[line width=1pt] (7, 5) -- (8, 2);
	\draw[line width=1pt] (7.8, 2.2) -- (11.2, 2.2);
	\draw[line width=1pt] (11, 2) -- (12, 5);

	\fill (7, 5) circle (2pt);
	\fill (7.1, 4.7) circle (2pt);
	\fill (7.2, 4.4) circle (2pt);	
	\fill (7.3, 4.1) circle (2pt);	
	\fill (7.4, 3.8) circle (2pt);	
	\fill (7.5, 3.5) circle (2pt);
	\fill (7.6, 3.2) circle (2pt);	
	\fill (7.7, 2.9) circle (2pt);	
	\fill (7.8, 2.6) circle (2pt);	
	\fill (7.87, 2.41) circle (2pt);	
	\fill (8.5, 2.2) circle (2pt);
	\fill (9, 2.2) circle (2pt);
	\fill (9.5, 2.2) circle (2pt);
	\fill (10, 2.2) circle (2pt);
	\fill (11.9, 4.7) circle (2pt);
	\fill (11.8, 4.4) circle (2pt);	
	\fill (11.7, 4.1) circle (2pt);	
	\fill (11.6, 3.8) circle (2pt);	
	\fill (11.5, 3.5) circle (2pt);

	\draw[line width=1pt] (0, 5) -- (1, 2);
	\draw[line width=1pt] (0.8, 2.2) -- (4.2, 2.2);
	\draw[line width=1pt] (4, 2) -- (5, 5);

	\fill (0, 5) circle (2pt);
	\fill (0.1, 4.7) circle (2pt);
	\fill (0.2, 4.4) circle (2pt);	
	\fill (0.3, 4.1) circle (2pt);	
	\fill (0.4, 3.8) circle (2pt);	
	\fill (0.5, 3.5) circle (2pt);
	\fill (0.6, 3.2) circle (2pt);	
	\fill (0.7, 2.9) circle (2pt);	
	\fill (0.8, 2.6) circle (2pt);	
	\fill (0.87, 2.41) circle (2pt);
	\fill (4.9, 4.7) circle (2pt);
	\fill (4.8, 4.4) circle (2pt);	
	\fill (4.7, 4.1) circle (2pt);	
	\fill (4.6, 3.8) circle (2pt);	
	\fill (4.5, 3.5) circle (2pt);
	
	\fill (0.93, 2.2) circle (3pt) node[left] {2 pts};	
	\fill (4.07, 2.2) circle (3pt) node[right] {2 pts};	
	
	\node (x) at (2.5, 10) {$X$};
	\node (y) at (9.5, 10) {$Y$};
	\node (m) at (13, 8.5) {$\in \overline{M}_{0,n}$};
	
	\node (a) at (6, 8.5)
	{\rotatebox[origin=c]{180}{$\rightsquigarrow$}};
	\node (b) at (6, 3.5)
	{\rotatebox[origin=c]{180}{$\rightsquigarrow$}};
	\node (c) at (2.5, 5.5) {$\downarrow$};
	\node (d) at (9.5, 5.5) {$\downarrow$};
	\node (u) at (14, 3.5) {$\in U_{d,n}\gquot_{\frac{4}{9},
	\vec{\frac{2}{9}}}\SL(6)$};
\end{tikzpicture}
\normalsize
\end{center}

\subsection{Modular compactifications not from GIT}\label{section:ModNotGIT}

In the above subsection we saw an example of a GIT compactification of $M_{0,n}$ which is not modular in the sense of \cite{Smy09}.  On the other hand, there are also examples of modular compactifications which do not arise from our GIT construction.  For instance, consider a partition $[n]=I\sqcup J\sqcup K$ into three nonempty subsets.  It is easy to see that assigning a tail if and only if the marked points on it are indexed entirely by $I$ or entirely by $J$ yields an extremal assignment.  Suppose this assignment is given by a geometric quotient of $U_{d,n}$.  If a tail has only two marked points, $p_{i_1},p_{i_2}$, both indexed by $I$, then by Proposition \ref{DegreeOfTails} we have $\sigma(\{i_1,i_2\})=0$ and so $c_{i_1}+c_{i_2} < 1$.  Similarly, considering a tail with two points $p_{j_1},p_{j_2}$ both indexed by $J$ forces $c_{i_1}+c_{i_2} < 1$.  Without loss of generality write $c_{i_1} \le c_{i_2}$ and $c_{j_1} \le c_{j_2}$.  Then $c_{i_1}+c_{j_1} < 1$, so $\sigma(\{i_1,j_1\})=0$, and hence a tail with only $p_{i_1}$ and $p_{j_1}$ would be contracted, contradicting the definition of the extremal assignment.

\bibliography{ref}

\begin{thebibliography}{Kap93b}

\bibitem[AGS10]{AGS10}
Valery Alexeev, Angela Gibney, and David Swinarski.
\newblock {Conformal blocks divisors on $\overline{M}_{0,n}$ from $sl_2$}.
\newblock 2010.

\bibitem[Art76]{Art76}
Michael Artin.
\newblock {\em Lectures on deformations of singularities}.
\newblock Tata Institute of Fundamental Research, 1976.

\bibitem[ASW10]{ASW11}
J.~Alper, D.~Smyth, and F.~Wyck.
\newblock Weakly proper moduli stacks of curves.
\newblock {\em arXiv:1012.0538}, 2010.

\bibitem[Bog99]{Bog99}
Marco Boggi.
\newblock Compactifications of configurations of points on {${\Bbb P}^1$} and
  quadratic transformations of projective space.
\newblock {\em Indag. Math. (N.S.)}, 10(2):191--202, 1999.

\bibitem[CHS08]{CHS}
Izzet Coskun, Joe Harris, and Jason Starr.
\newblock The effective cone of the {K}ontsevich moduli space.
\newblock {\em Canad. Math. Bull.}, 51(4):519--534, 2008.

\bibitem[DH98]{DH}
I.~V. Dolgachev and Y.~Hu.
\newblock Variation of geometric invariant theory quotients.
\newblock {\em Inst. Hautes \'Etudes Sci. Publ. Math.}, (87):5--56, 1998.

\bibitem[DN89]{DN89}
J.-M. Drezet and M.~S. Narasimhan.
\newblock Groupe de {P}icard des vari{\'e}t{\'e}s de modules de fibr{\'e}s
  semi-stables sur les courbes alg{\'e}briques.
\newblock {\em Invent. Math.}, 97(1):53--94, 1989.

\bibitem[Dol03]{Dol03}
Igor Dolgachev.
\newblock {\em Lectures on invariant theory}, volume 296 of {\em London
  Mathematical Society Lecture Note Series}.
\newblock Cambridge University Press, Cambridge, 2003.

\bibitem[Fak09]{Fak09}
Najmuddin Fakhruddin.
\newblock Chern classes of conformal blocks.
\newblock {\em \textrm{arXiv:0904.2918}, to appear in the proceedings of the
  UGA conference 'Compact Moduli and Vector Bundles'}, 2009.

\bibitem[FP95]{FPnotes}
W.~Fulton and R.~Pandharipande.
\newblock Notes on stable maps and quantum cohomology.
\newblock {\em Proceedings of Algebraic Geometry -- Santa Cruz (1995), Proc.
  Sympos. Pure Math.}, 62:45--96, 1995.

\bibitem[Gia10]{NoahCB}
Noah Giansiracusa.
\newblock Conformal blocks and rational normal curves.
\newblock {\em arXiv:1012.4835, to appear in J. Algebraic Geom.}, 2010.

\bibitem[Gro61]{EGA2}
A.~Grothendieck.
\newblock \'{E}l{\'e}ments de g{\'e}om{\'e}trie alg{\'e}brique. {II}. \'{E}tude
  globale {\'e}l{\'e}mentaire de quelques classes de morphismes.
\newblock {\em Inst. Hautes {\'E}tudes Sci. Publ. Math.}, (8):222, 1961.

\bibitem[GS10]{GS}
Noah Giansiracusa and Matt Simpson.
\newblock {GIT} compactification of {$M_{0,n}$} from conics.
\newblock {\em Int. Math. Res. Notices}, doi:10.1093/imrn/rnq228, 2010.

\bibitem[Har77]{Har77}
Robin Hartshorne.
\newblock {\em Algebraic geometry}.
\newblock Springer-Verlag, New York, 1977.
\newblock Graduate Texts in Mathematics, No. 52.

\bibitem[Har87]{Har87}
J.~Harris.
\newblock Curves and their moduli.
\newblock {\em Proc. Symp. Pure Math.}, 46:99--143, 1987.

\bibitem[Has03]{Has03}
Brendan Hassett.
\newblock Moduli spaces of weighted pointed stable curves.
\newblock {\em Adv. Math.}, 173(2):316--352, 2003.

\bibitem[HK00]{HK}
Yi~Hu and Sean Keel.
\newblock Mori dream spaces and {GIT}.
\newblock {\em Michigan Math. J.}, 48:331--348, 2000.
\newblock Dedicated to William Fulton on the occasion of his 60th birthday.

\bibitem[Hu96]{Hu96}
Yi~Hu.
\newblock Relative geometric invariant theory and universal moduli spaces.
\newblock {\em Internat. J. Math.}, 7(2):151--181, 1996.

\bibitem[Kap93a]{KapChow}
Mikhail Kapranov.
\newblock Chow quotients of grassmannians, {I}.
\newblock {\em Adv. Sov. Math.}, 16(2):29--110, 1993.

\bibitem[Kap93b]{Kap}
Mikhail Kapranov.
\newblock Veronese curves and grothendieck-knudsen moduli space
  {$\overline{M}_{0,n}$}.
\newblock {\em J. Algebraic Geom.}, 2:239--262, 1993.

\bibitem[Kee92]{Kee92}
Sean Keel.
\newblock Intersection theory of moduli space of stable {$n$}-pointed curves of
  genus zero.
\newblock {\em Trans. Amer. Math. Soc.}, 330(2):545--574, 1992.

\bibitem[Kol96]{Kol96}
J{{\'a}}nos Koll{{\'a}}r.
\newblock {\em Rational curves on algebraic varieties}, volume~32 of {\em
  Ergebnisse der Mathematik und ihrer Grenzgebiete. 3. Folge. A Series of
  Modern Surveys in Mathematics [Results in Mathematics and Related Areas. 3rd
  Series. A Series of Modern Surveys in Mathematics]}.
\newblock Springer-Verlag, Berlin, 1996.

\bibitem[Kon92]{Kont92}
Kontsevich.
\newblock Intersection theory on the moduli space of curves and the matrix airy
  function.
\newblock {\em Comm. Math. Phys.}, 147:1--23, 1992.

\bibitem[LMB00]{LMB00}
G{{\'e}}rard Laumon and Laurent Moret-Bailly.
\newblock {\em Champs alg{\'e}briques}, volume~39 of {\em Ergebnisse der
  Mathematik und ihrer Grenzgebiete. 3. Folge. A Series of Modern Surveys in
  Mathematics [Results in Mathematics and Related Areas. 3rd Series. A Series
  of Modern Surveys in Mathematics]}.
\newblock Springer-Verlag, Berlin, 2000.

\bibitem[MFK94]{git}
D.~Mumford, J.~Fogarty, and F.~Kirwan.
\newblock {\em Geometric invariant theory}, volume~34 of {\em Ergebnisse der
  Mathematik und ihrer Grenzgebiete (2)}.
\newblock Springer-Verlag, Berlin, third edition, 1994.

\bibitem[Moo11]{Moo11}
Han-Bom Moon.
\newblock Log canonical models for the moduli space of pointed stable rational
  curves.
\newblock {\em arXiv:1101.1166}, 2011.

\bibitem[Mum77]{Mum76}
David Mumford.
\newblock Stability of projective varieties.
\newblock {\em Enseignement Math. (2)}, 23(1-2):39--110, 1977.

\bibitem[Pan99]{Pan99}
Rahul Pandharipande.
\newblock Intersections of {$\mathbb{Q}$}-divisors on {K}ontsevich's moduli
  space {$\overline M_{0,n}(\bold P^r,d)$} and enumerative geometry.
\newblock {\em Trans. Amer. Math. Soc.}, 351(4):1481--1505, 1999.

\bibitem[Sch91]{Schubert}
David Schubert.
\newblock A new compactification of the moduli space of curves.
\newblock {\em Compositio Math.}, 78(3):297--313, 1991.

\bibitem[Sim08]{Sim08}
Matthew Simpson.
\newblock {\em On log canonical models of the moduli space of stable pointed
  genus zero curves}.
\newblock PhD thesis, 2008.
\newblock Thesis (Ph.D.)--Rice University.

\bibitem[Smy09]{Smy09}
David~Ishii Smyth.
\newblock Towards a classification of modular compactifications of
  $\bar{M}_{g,n}$.
\newblock {\em \textrm{arXiv:0902.3609}}, 2009.

\bibitem[Tha96]{Thaddeus}
M.~Thaddeus.
\newblock Geometric invariant theory and flips.
\newblock {\em J. Amer. Math. Soc.}, 9(3):691--723, 1996.

\end{thebibliography}

\end{document}